\newif\ifPDF
\newtheorem{theorem}{Theorem}[section]
\newtheorem{remark}[theorem]{Remark}
\newtheorem{assumption}{Assumption}
\newtheorem{truth}{Theorem}
\newcommand{\bbR}{\mathbb R}
 \newcommand{\bx}{\mathbf x}
\DeclareMathOperator{\sech}{sech}
\newenvironment{keywords}
{\noindent{\bf Key words.}\small}{\par\vspace{1ex}}
\newenvironment{AMS}
{\noindent{\bf AMS subject classifications 2020.}\small}{\par}
\newcommand{\DELETE}[1]{}
\newlength{\bibitemsep}\setlength{\bibitemsep}{.3\baselineskip plus .05\baselineskip minus .05\baselineskip}
\newlength{\bibparskip}\setlength{\bibparskip}{0pt}
\let\oldthebibliography\thebibliography
\renewcommand\thebibliography[1]{%
  \oldthebibliography{#1}%
  \setlength{\parskip}{\bibitemsep}%
  \setlength{\itemsep}{\bibparskip}%
}
\title{An energy-based discontinuous Galerkin method for the nonlinear Schr\"{o}dinger equation with wave operator}
\author{
Kui Ren \thanks{Department of Applied Physics and Applied Mathematics, Columbia University, New York, NY 10027, USA. Email: kr2002@columbia.edu} 
\and 
Lu Zhang 
\thanks{Department of Computational Applied Mathmatics and Operations Research, Rice University, Houston, TX 77005, USA. Email: lz82@rice.edu} 
\and  
Yin Zhou \thanks{Department of Applied Physics and Applied Mathematics, Columbia University, New York, NY 10027, USA. Email: yz3888@columbia.edu}
}
\begin{document}
%%%%%%%%%%%%%%%%%%%%%%%%%%%%%%%%%%%%%%%%%%%%%%%%%%%%%%%%%%%%%%%%%%
%%%%%%%BEGIN DOCUMENT %%%%BEGIN DOCUMENT %%%%BEGIN DOCUMENT%%%%%%%
%%%%%%%%%%%%%%%%%%%%%%%%%%%%%%%%%%%%%%%%%%%%%%%%%%%%%%%%%%%%%%%%%%

\maketitle

%%%%%%%%%%%%%%%%%%%%%%%%%%%
%%%%%%%%ABSTRACT%%%%%%%%%%%
%%%%%%%%%%%%%%%%%%%%%%%%%%%
\begin{abstract}
This work develops an energy-based discontinuous Galerkin (EDG) method for the nonlinear Schr\"{o}dinger equation with the wave operator. The focus of the study is on the energy-conserving or energy-dissipating behavior of the method with some simple mesh-independent numerical fluxes we designed. We establish error estimates in the energy norm that require careful selection of a test function for the auxiliary equation involving the time derivative of the displacement variable. A critical part of the convergence analysis is to establish the $L^2$ error bounds for the time derivative of the approximation error in the displacement variable by using the equation that determines its mean value. Using a specially chosen test function, we show that one can create a linear system for the time evolution of the unknowns even when dealing with nonlinear properties in the original problem. Extensive numerical experiments are provided to demonstrate the optimal convergence of the scheme in the $L^2$ norm with our choices of the numerical flux.
\end{abstract}
 
%%%%%%%%%%%%%%%%%%%%%%%%%%%
%%%%%%%%%KEYWORDS%%%%%%%%%%
%%%%%%%%%%%%%%%%%%%%%%%%%%%

\begin{keywords}
%\textbf{Keywords}: 
discontinuous Galerkin method, nonlinear Schr\"{o}dinger equation, wave operator, error estimates, numerical stability
\end{keywords}

%%%%%%%%%%%%%%%%%%%%%%%%%%
%%%   AMS or PACS   %%%%%%
%%%%%%%%%%%%%%%%%%%%%%%%%%

\begin{AMS}
	65M12, 65M60
\end{AMS}

%%%%%%%%%%%%%%%%%%%%%%%%%%%%%%%%%%%%%%%%%%%%%%%%%%%%%%%%%%%%%%%%%%
%%%%%%%%%%%%%%%%%%%%%%%%%%%%%%%%%%%%%%%%%%%%%%%%%%%%%%%%%%%%%%%%%%
\section{Introduction}
%%%%%%%%%%%%%%%%%%%%%%%%%%%%%%%%%%%%%%%%%%%%%%%%%%%%%%%%%%%%%%%%%%
%%%%%%%%%%%%%%%%%%%%%%%%%%%%%%%%%%%%%%%%%%%%%%%%%%%%%%%%%%%%%%%%%%

Nonlinear Schr\"{o}dinger equations with wave operators are widely used in physical applications~\cite{BaDoXi-PhysicaD10,Xin-PhysicaD00}. The equations are natural generalizations of linear Schr\"{o}dinger equations to include different nonlinear effects. In this work, we consider a specific version of the nonlinear Schr\"{o}dinger equation with the wave operator, given in the following form:
\begin{equation}\label{EQ:NLSW}
\begin{aligned}
   & \frac{\partial^2 u}{\partial t^2} - \Delta u + i \alpha \frac{\partial u}{\partial t} + \beta({\bf x}) f(|u|^2) u = 0, \quad {\bf x} \in \Omega, \ 0<t<T,\\
   & u({\bf x}, 0) = u_0({\bf x}), \quad \frac{\partial u}{\partial t}({\bf x}, 0) = u_1({\bf x}), \ \quad {\bf x} \in \Omega,
\end{aligned}
\end{equation}
with periodic boundary conditions in space. Here $\Omega$ is a bounded domain in $\mathbb{R}^d$ ($d$ being the dimension of the space), $\frac{\partial }{\partial t}$ and $\frac{\partial^2}{\partial t^2}$ denote the first-order and second-order time derivatives, respectively. $\Delta = \sum_{k = 1}^d \frac{\partial^2}{\partial x_k^2}$ is the Laplace operator, and $i$ is the standard imaginary unit (i.e. $i^2 = -1$). The wave function, $u(\bx, t)$, a complex function of space and time, is the main quantity of interest. The nonlinear effect of the medium is described by the real-valued function $f:\bbR_0\to\bbR$. The strength of the nonlinearity is controlled by the parameter $\beta$. The parameter $\alpha$ is a real constant that describes the damping or amplification effect of the underlying medium. In terms of notation, we use the standard $|u|: = \sqrt{u\bar{u}}$ to denote the modulus of the complex number $u$ (where $\bar{u}$ represents the complex conjugate of $u$).

%The nonlinear Schr\"{o}dinger equation with wave operator ~\eqref{EQ:NLSW} has been extensively studied due to its relevance in various applications.  One such example is its appearance as the nonrelativistic limit of the Klein-Gordon equation ~\cite{scho1979klein, shuji2002klein}, which describes the behavior of relativistic particles. Additionally, it arises in the Langmuir wave envelope approximation in plasma physics ~\cite{luc1995plasma}, where it characterizes wave propagation in plasma, and in the modulated planar pulse approximation of the sine-Gordon equation, where it accounts for the propagation of light bullets ~\cite{Xin-PhysicaD00,BaDoXi-PhysicaD10}. 
Model~\eqref{EQ:NLSW} has been extensively studied due to its relevance in various applications. We refer interested readers to~\cite{BaDoXi-PhysicaD10,BeCo-PhysicaD95,Matsuuchi-JPSJ80,scho1979klein,shuji2002klein,Xin-PhysicaD00} and references therein for some examples of such applications. Developing accurate and efficient numerical methods for the computational study of the model has been an active area of research in recent years. Finite difference schemes are among the most popular ones developed~\cite{DeLi-JCAM23,Wang-JMAA15}. The work of~\cite{GuLi-JNMCA83} was among the earliest in this direction. It presented an implicit nonconservative finite-difference scheme for~\eqref{EQ:NLSW}. Conditionally stable explicit conservative finite-difference methods are developed in~\cite{zhang2003findiff}. These schemes, however, need to be combined with another scheme to start the computation. The scheme presented in~\cite{zhang2003findiff} was later improved to be unconditionally stable in~\cite{wang2006findiff}. All these methods achieve second-order accuracy in space, with $L^2$ or energy error estimates, and are restricted to problems with one spatial dimension (i.e. $d = 1$). When the nonlinear term is cubic, fourth-order three-level compact finite difference schemes have been proposed in~\cite{LiZhWa-AMC12,LaOm-ANM22}, again in the one-dimensional setting. Other spatial discretization schemes have also been developed. In terms of spectral methods, ~\cite{Wang-JCM07} proposed a second-order multisymplectic Fourier pseudospectral method, ~\cite{BaCa-SIAM14,BaCaZh-SIAM14} developed pseudospectral methods with a multiscale time integrator, and~\cite{LiGoZh-JSC21} coupled the trigonometric pseudo-spectral method with high-order time difference scheme. In terms of the finite element method, ~\cite{li2015ldg} proposed a local discontinuous Galerkin (LDG) method for the problem. The scheme in~\cite{li2015ldg} is of arbitrary order, and the authors were able to prove optimal $L^2$ error estimates for simplified linear problems.

Special attention has been paid to several challenging issues in the numerical solution of~\eqref{EQ:NLSW}. These issues include, for instance, a) how to handle complicated boundary conditions for the model~\cite{LiWuZh-PRE14,Li-M2AS21,PaXiHeZh-ANM20}, for example, absorbing boundary conditions for simulating wave propagation in unbounded domains; b) how to construct conservative schemes such that the energy-conservation property of the physical system~\cite{YaLiGu-AMC21,ShZh-CMA22} can be ensured on the discrete level; and c) how to develop accurate time integrators to ensure numerical stability and control error propagation in long time integration of the system~\cite{CoFa-DCDS98,FeGuYu-arXiv23,GuCaJiWa-arXiv22,LiGoZh-JSC21}. Preserving asymptotic behavior of the solution in various scaling limits is also practically crucial and has been extensively studied~\cite{BaCa-SIAM14,BaZh-JCP19}.

%have also been used. Higher-order methods are usually preferred when simulating waves because they have fewer dispersion and dissipation errors than lower-order methods. Although the local DG method proposed in  

The objective of this work is to develop an arbitrary order energy-based discontinuous Galerkin (EDG) method for the nonlinear Schr{\"o}dinger model~\eqref{EQ:NLSW} in any spatial dimension $d\geq 1$. We establish the optimal convergence rate of the scheme in an energy norm with a general nonlinear coupling $f$.

The energy-based discontinuous Galerkin method was first introduced in~\cite{appelo2015new} as a generalization of the standard discontinuous Galerkin (DG) method~\cite{CoKaSh-Book12,Riviere-Book08} to allow the direct handling of scalar wave equations in second-order forms. The EDG method is based on two fundamental principles: first, the introduction of a variable that is weakly equivalent to the time derivative of the solution, and second, the development of mesh-independent numerical fluxes based on the energy flux at the element boundaries. Since then, the EDG method has been successfully applied to various wave equations beyond its original formulation for second-order scalar wave equations. For example, \cite{appelo2018energy} extended the method to elastic wave equations by incorporating additional symmetries of the potential energy, \cite{zhang2019energy} applied it to the advective wave equation, where the energy does not depend solely on the sum of kinetic and potential energy, and \cite{hagstrom2021discontinuous} used the method to solve Maxwell’s equations in general linear
dispersive media where the problem is formulated in second-order form. The method has also been generalized to semilinear and nonlinear wave equations by designing a special weak formulation for the displacement equation, resulting in a linear system for its time derivative~\cite{appelo2020energy,zhang2023nonlinear}. Recent studies have focused on improving the performance of the method, such as improving the convergence rates ~\cite{du2019convergence}, dealing with numerical stiffness results from the polynomial approximation ~\cite{appelo2021stagger,zhang2021energy}, and extending the method to fourth-order semilinear wave equations ~\cite{zhang2021local}. Compared to other DG methods, EDG methods allow for simple conservative or upwind fluxes independent of the mesh, and only one auxiliary variable is introduced regardless of the dimension of the problem, making it an attractive option for solving a wide range of wave equations.

In this work, we extend the scope of the EDG method by applying it to the nonlinear Schr\"{o}dinger equation with the wave operator ~\eqref{EQ:NLSW}. To solve this problem, we introduce the time derivative of the solution variable and transform the equation into a first-order system in time. Notably, this system contains only two unknowns, regardless of the dimension of the problem. The proposed semi-discrete EDG method exhibits unconditional stability and does not require a penalty term. In addition, the scheme conserves or dissipates energy based on the choice of simple, mesh-independent numerical fluxes. To obtain optimal error estimates of the proposed method in an energy norm, we develop a special test function for the auxiliary equation of $\frac{\partial u}{\partial t}$ in the nonlinear Schr\"{o}dinger equation with the wave operator (see ~\eqref{eq:21}). Note that this test function facilitates both the analysis of optimal error estimates in an energy norm and the computation of the time derivative of unknowns for the nonlinear problem ~\eqref{EQ:NLSW}. To control the $L^2$ error of the displacement caused by the nonlinear coupling in the problem ~\eqref{EQ:NLSW}, we introduce an additional positive nonlinear volume integral in the corresponding error energy equation, as described in ~\eqref{eq:err_energy}. Finally, we use the mean value equation of the time derivative of the error in the displacement variable to constrain its $L^2$ norm, which ultimately leads to optimal error estimates in the energy norm on structured grids.

The rest of the paper is organized as follows. In Section~\ref{sec_formula}, we present the reformulation of the problem~\eqref{EQ:NLSW} and the EDG semi-discretization, along with different interelement fluxes, and establish a basic energy estimate. We then derive the error estimates in the energy norm for the proposed EDG method in Section~\ref{sec:error}. Numerical experiments in one and two dimensions are provided in Section~\ref{sec:numerical} to verify our theoretical results. Concluding remarks are offered in Section~\ref{sec:conclusion}.

%%%%%%%%%%%%%%%%%%%%%%%%%%%%%%%%%%%%%%%%%%%%%%%%%%%%%%%%%%%%%%%%%%
%%%%%%%%%%%%%%%%%%%%%%%%%%%%%%%%%%%%%%%%%%%%%%%%%%%%%%%%%%%%%%%%%%
\section{Energy-based DG scheme}\label{sec_formula}
%%%%%%%%%%%%%%%%%%%%%%%%%%%%%%%%%%%%%%%%%%%%%%%%%%%%%%%%%%%%%%%%%%
%%%%%%%%%%%%%%%%%%%%%%%%%%%%%%%%%%%%%%%%%%%%%%%%%%%%%%%%%%%%%%%%%%

%The DG method is a finite-element approach that uses piecewise polynomial basis functions for numerical solutions and test functions in spatial variables, and it is highly effective in solving first-order hyperbolic partial differential equations (PDEs) of Friedrichs form ~\cite{ern2006discontinuous}. Reed and Hill first proposed the DG method in 1973 ~\cite{reed1973triangular}. Since then, its numerous advantages, such as arbitrary high-order accuracy, local time evolution, element-wise conservation, geometric flexibility, and $hp$-adaptability, have made it a popular choice in various fields of science, engineering, and industry~\cite{}. However, although the DG method effectively solves hyperbolic PDEs in first-order Friedrichs form, wave equations arising in physical theories are typically second-order systems. Whether it is possible to transform problems with second-order spatial derivatives into Friedrichs form remains to be determined. 

To obtain an EDG formulation for problem ~\eqref{EQ:NLSW}, we introduce a new variable $v = \frac{\partial u}{\partial t}$, which leads to the first-order system in time:
\begin{equation}\label{system}
\left\{
\begin{aligned}
u_t - v &= 0, \\
v_t - \Delta u + i \alpha v + \beta({\bf x}) f(|u|^2) u &= 0.
\end{aligned}
\right.
\end{equation}
Here and in the following the subindex $t$ represents the partial derivative with $u_t = \frac{\partial u}{\partial t}$. We introduce the following energy functional for the system ~\eqref{system} 
\begin{equation}
\label{eq:energy}
E(t) := \int_{\Omega} |v|^2 + |\nabla u|^2 + \beta({\bf x}) F(|u|^2) d {\bf x},
\end{equation}
where $F(s) = \int_0^s f(r) dr$. Under periodic boundary conditions, we find that the energy is conserved over time as
\[\frac{dE(t)}{dt} = \int_\Omega 2\mbox{Re}(v\bar{v}_t) + 2\mbox{Re}(\nabla u\cdot \nabla \bar{u}_t) + \beta({\bf x}) f(|u|^2)\frac{d|u|^2}{dt}\ d{\bf x}= \int_{\partial \Omega} 2\mbox{Re}(\nabla u \cdot {\bf n} \bar{u})\ dS = 0,\]
where ${\bf n}$ is the outward unit normal, and $\mbox{Re}(w)$ is the real part of the complex-valued function $w$. That is,
\begin{equation*}
E(t) = E(0) \quad \forall\ 0<t<T.
\end{equation*}
In the next subsection, we introduce the EDG semi-discretization of system ~\eqref{system} and several interelement fluxes and establish a fundamental energy estimate.

%%%%%%%%%%%%%%%%%%%%%%%%%%%%%%%%%%%%%%%%%%%%%%%%%%%%%%%%%%%%%%%%%%
\subsection{Semi-discrete EDG formulation}
\label{sec:semi_discrete}
%%%%%%%%%%%%%%%%%%%%%%%%%%%%%%%%%%%%%%%%%%%%%%%%%%%%%%%%%%%%%%%%%%

Let us now introduce our EDG approach for ~\eqref{EQ:NLSW} based on the reformulation ~\eqref{system}. The domain $\Omega$ is partitioned into non-overlapping elements $\Omega_j$, where $\Omega = \cup_j\Omega_j$. To approximate the solution components $(u,v)$ on each element $\Omega_j$, we use either complex polynomials or the tensor product of complex polynomials of degree $q$ and $s$, respectively. Specifically, we define the sets $U_h^q$ and $V_h^s$ as
\begin{align*}
U_h^q &= \{u^h({\bf x},t)\big| u^h({\bf x},t) \in \mathcal{P}^q(\Omega_j), {\bf x} \in \Omega_j, 0 \le t \le T\}, \\
V_h^s &= \{v^h({\bf x},t)\big| v^h({\bf x},t) \in \mathcal{P}^s(\Omega_j), {\bf x} \in \Omega_j, 0 \le t \le T\}.
\end{align*}
Next, we look for an approximation to the system ~\eqref{system} that satisfies a discrete energy estimate, analogous to ~\eqref{eq:energy}. We consider the following discrete energy for each $\Omega_j$
\begin{equation}\label{dis_energy}
E_j^h(t) := \int_{\Omega_j} |v^h|^2 + |\nabla u^h|^2\ d{\bf x} + \sum_{{\bf k}}\omega_{j,{\bf k}}\beta({\bf x}_{j,{\bf k}})F(|u^h({\bf x}_{j,{\bf k}}, t)|^2),
\end{equation}
where we use a quadrature rule with nodes ${\bf x}_{j, {\bf k}}$ in $\Omega_j$ and positive weights $\omega_{j, {\bf k}}$ to approximate the integration of the nonlinear term involving $F(|u^h|)$ in ~\eqref{eq:energy}. Then, the time derivative of the discrete energy is
\begin{equation}\label{dt_dis_energy}
    \frac{dE_j^h(t)}{dt} = \int_{\Omega_j} 2\mbox{Re}(v^h\bar{v}^h_t) + 2\mbox{Re}(\nabla u^h\cdot \nabla \bar{u}^h_t) \ d{\bf x} + \sum_{\bf k} \omega_{j,{\bf k}}\beta({\bf x}_{j, {\bf k}}) f(|u^h({\bf x}_{j,{\bf k}},t)|^2)\frac{d|u^h({\bf x}_{j,{\bf k}},t)|^2}{dt}.
\end{equation}
To ensure the accuracy of the quadrature rule for integrating nonlinear terms, we impose the following condition.
\begin{assumption}\label{assump}
The quadrature rule satisfies $\forall \Omega_j$,
    \begin{align}
    \sum_{\bf k} \omega_{j,{\bf k}} \phi^2({\bf x}_{j,{\bf k}}) - \int_{\Omega_j} \phi^2\ d{\bf x} &= 0,  \nonumber\\
    \sum_j\Big| \sum_{\bf k} \omega_{j, {\bf k}}\phi({\bf x}_{j, {\bf k}})g({\bf x}_{j, {\bf k}}) - \int_{\Omega_j} \phi g \ d{\bf x} \Big| &\leq Ch^{q+1}\|\phi\|_{L^2(\Omega_h)}|g|_{H^{q+1}(\Omega_h)},\nonumber
    \end{align}
    for all $\phi \in \mathcal{P}^q(\Omega_j)$ and  $g \in H^{q+1}(\Omega_h)$. Here, $C$ is a constant independent of $h$. 
\end{assumption}
To ensure that the weak form is compatible with the discrete energy equations ~\eqref{dis_energy} and ~\eqref{dt_dis_energy}, we choose $\phi_u \in U_h^q$ and $\phi_v \in V_h^s$ to test the first and second equations in the system ~\eqref{system} with $-\Delta \phi$ and $\psi$, respectively. We also add terms that vanish in the continuous problem. This results in the following equations,
\begin{align*}
    &\int_{\Omega_j} - \Delta \phi (u^h_t - v^h)\ d{\bf x} + \sum_{\bf k}\omega_{j,{\bf k}}\beta({\bf x}_{j,{\bf k}}) f(|u^h({\bf x}_{j,{\bf k}})|^2) \phi({\bf x}_{j,{\bf k}}) \big(u^h_t({\bf x}_{j,{\bf k}}) - v^h({\bf x}_{j,{\bf k}})\big)  \\
    = & \int_{\partial \Omega_j} {\bf n} \cdot \nabla \phi (v^\ast - u^h_t)\ dS, \\
    &\int_{\Omega_j} v^h_t \psi - \Delta u^h \psi + i \alpha v^h \psi\ d{\bf x} + \sum_{\bf k}\omega_{j,{\bf k}}\beta({\bf x}_{j,{\bf k}}) f(|u^h({\bf x}_{j, {\bf k}})|^2) u^h({\bf x}_{j, {\bf k}}) \psi({\bf x}_{j, {\bf k}}) \\
    =& \int_{\partial \Omega_j} {\bf n} \cdot \big((\nabla u)^\ast  - \nabla u^h \big)\psi\ dS.
\end{align*}
Here we have omitted $t$ in $u({\bf x}_{j,{\bf k}})$ for simplicity, and introduced numerical fluxes $v^\ast$ and $(\nabla u)^\ast$ for interelement boundaries. An integration by parts leads us to the alternative forms,
\begin{align}
    \label{eq:21}
    &\int_{\Omega_j} \nabla \phi \cdot \nabla (u^h_t - v^h)\ d{\bf x} + \sum_{\bf k}\omega_{j,{\bf k}}\beta({\bf x}_{j,{\bf k}}) f(|u^h({\bf x}_{j,{\bf k}})|^2) \phi({\bf x}_{j,{\bf k}}) \big(u^h_t({\bf x}_{j,{\bf k}}) - v^h({\bf x}_{j,{\bf k}})\big) \nonumber \\
    = & \int_{\partial \Omega_j} {\bf n} \cdot \nabla \phi (v^\ast - v^h) \ dS,\\
    \label{eq:22}
    &\int_{\Omega_j} v^h_t \psi + \nabla u^h \cdot \nabla \psi + i \alpha v^h \psi \ d{\bf x} + \sum_{\bf k}\omega_{j,{\bf k}}\beta({\bf x}_{j,{\bf k}}) f(|u^h({\bf x}_{j, {\bf k}})|^2) u^h({\bf x}_{j, {\bf k}}) \psi({\bf x}_{j, {\bf k}}) \nonumber \\
    = &\int_{\partial \Omega_j} {\bf n} \cdot (\nabla u)^\ast \psi \ dS.
\end{align}
Choosing $\phi = \bar{u}^h$ and $\psi = \bar{v}^h$ in ~\eqref{eq:21} and ~\eqref{eq:22}, respectively, we collect

\begin{align}
    \label{eq:21_1}
   & \int_{\Omega_j} \nabla \bar{u}^h \cdot \nabla (u^h_t - v^h) \ d{\bf x}+ \sum_{{\bf k}}\omega_{j,{\bf k}}\beta({\bf x}_{j,{\bf k}}) f(|u^h({\bf x}_{j,{\bf k}})|^2) \bar{u}^h({\bf x}_{j,{\bf k}}) \big(u^h_t({\bf x}_{j,{\bf k}}) - v^h({\bf x}_{j,{\bf k}})\big) \nonumber \\
   =& \int_{\partial \Omega_j} {\bf n} \cdot \nabla \bar{u}^h (v^* - v^h) \ dS,\\
    \label{eq:22_1}
   & \int_{\Omega_j} v^h_t \bar{v}^h + \nabla u^h \cdot \nabla \bar{v}^h + i \alpha v^h \bar{v}^h \ d{\bf x}+ \sum_{{\bf k}}\omega_{j,{\bf k}}\beta({\bf x}_{j,{\bf k}}) f(|u^h({\bf x}_{j,{\bf k}})|^2) u^h({\bf x}_{j,{\bf k}}) \bar{v}^h({\bf x}_{j,{\bf k}}) \nonumber\\
   = &\int_{\partial \Omega_j} {\bf n} \cdot (\nabla u)^* \bar{v}^h \ dS.
\end{align}
Then, we compute the complex conjugate of  ~\eqref{eq:21_1}--\eqref{eq:22_1}, add them, and use equation ~\eqref{dt_dis_energy} to get
\begin{equation*}
    \frac{d E^h}{dt} = \sum_j \frac{d E^h_j}{dt} = \sum_j \int_{\partial \Omega_j} {\bf n} \cdot 2 \mbox{Re} \Big(\nabla \overline{u}^h (v^\ast - v^h) + (\nabla u)^\ast \overline{v}^h\Big)\ dS.
\end{equation*}
To ensure that the EDG scheme ~\eqref{eq:21_1}--\eqref{eq:22_1} is uniquely solvable for any $\beta({\bf x})f(|u|^2)$, an additional equation is included to determine the mean value of $\frac{\partial u^h}{\partial t}$. This equation is given by
\begin{equation}\label{eq:mean_value}
\int_{\Omega_j} \tilde{\phi}(u^h_t - v^h)\ d{\bf x} = 0 \quad \forall \tilde{\phi} \in \mathcal{P}^0(\Omega_j).
\end{equation}
Note that the equation ~\eqref{eq:mean_value} does not affect the energy term and is also sufficient to ensure that equation ~\eqref{eq:21_1} holds when $\phi \in \mathcal{P}^0(\Omega_j)$. Moreover, it is consistent with the first equation in the original system ~\eqref{system}.

%%%%%%%%%%%%%%%%%%%%%%%%%%%%%%%%%%%%%%%%%%%%%%%%%%%%%%%%%%%%%%%%%%
\subsection{Numerical fluxes}
%%%%%%%%%%%%%%%%%%%%%%%%%%%%%%%%%%%%%%%%%%%%%%%%%%%%%%%%%%%%%%%%%%

It is necessary to define the numerical fluxes for interelement boundaries to establish the EDG scheme proposed in Section \ref{sec:semi_discrete}. We label two elements sharing an interelement boundary face, denoted as $F_j$, by $1$ and $2$. We introduce standard notations for averages and jumps given by 
\[{{v^h}} = \frac{1}{2}(v_1^{h} + v_2^{h}), \quad [[v^h]] = v_1^{h}{\bf n}_1 + v_2^{h}{\bf n}_2,\]
and
\[{{\nabla u^h}} = \frac{1}{2}(\nabla u_1^{h} + \nabla u_2^{h}), \quad [[\nabla u^h]] = \nabla u_1^{h} \cdot {\bf n}_1 + \nabla u_2^{h} \cdot {\bf n}_2.\]
Then, the net contribution to the discrete energy $E^h(t)$ from each element $\Omega_j$ is given by $\int_{F_j} 2J^h\ dS$ with
\begin{equation}
\label{eq:inn}
J^h = {\bf n}_1 \cdot \mbox{Re}\Big(\nabla \bar{u}_1^h (v^\ast - v_1^h) + (\nabla u)^\ast \bar{v}_1^h\Big) + {\bf n}_2 \cdot \mbox{Re}\Big(\nabla \bar{u}_2^h (v^\ast - v_2^h) + (\nabla u)^\ast \bar{v}_2^h\Big).
\end{equation}
To create an energy-stable scheme, we select numerical fluxes such that $J^h \leq 0$. If $J^h < 0$, the method is classified as dissipative, while $J^h = 0$ indicates a conservative method. To achieve this goal, we introduce the following numerical fluxes,
\begin{equation}
    \label{eq:generalfl}
    v^* = \mu v^h_1 + (1-\mu) v^h_2 - \tau [[\nabla u^h]], \quad (\nabla u)^* = (1-\mu) \nabla u^h_1 + \mu \nabla u^h_2 - \gamma [[v^h]],
\end{equation}
where $\mu \in \mathbb{R}$ and $\gamma, \tau \ge 0$. Plugging ~\eqref{eq:generalfl} into ~\eqref{eq:inn}, we have
\begin{equation*}
    J^h = - \mbox{Re} \Big(\gamma \big|[[v^h]]\big|^2 + \tau \big|[[\nabla u^h]]\big|^2\Big) \le 0.
\end{equation*}
Note that the commonly used \emph{central fluxes} can be obtained by setting $\mu = 1/2$ when $\gamma = \tau = 0$. This can be expressed as 
\begin{equation}\label{flux:central}
v^\ast = \{\{v^h\}\},\quad (\nabla u)^\ast = \{\{\nabla u^h\}\}.
\end{equation}
 The resulting scheme conserves energy with $J^h =0$. On the other hand, if $\gamma = \tau = 0$ and $\mu$ is equal to $0$ or $1$, \emph{alternating fluxes} are obtained, namely, 
\begin{equation}\label{flux:a1}
v^\ast = v_1^h, \quad (\nabla u)^\ast = \nabla u_2^h,
\end{equation}
or
\begin{equation}\label{fluxes:a2}
v^\ast = v_2^h, \quad (\nabla u)^\ast = \nabla u_1^h.
\end{equation}
This also leads to an energy conserving scheme with $J^h = 0$. The \emph{Sommerfeld fluxes} are obtained with $\mu = 1/2$, $\gamma = \xi/2$, with $\tau = 1/(2\xi)$, where $\xi > 0$, i.e, 
\begin{equation}\label{flux:up}
v^\ast = \{\{v^h\}\} - \frac{1}{2\xi} [[\nabla u^h]], \quad (\nabla u)^\ast = \{\{\nabla u\}\} - \frac{\xi}{2}[[v^h]].
\end{equation}
This scheme is energy dissipating because it yields $J^h < 0$.

Now, we are ready to establish the stability of the proposed EDG scheme.
\begin{truth}\label{thm1}
The discrete energy $E^h(t) = \sum_j E^h_j(t)$, where $E^h_j(t)$ defined in ~\eqref{dis_energy}, satisfies
\begin{equation*}
    \frac{d E^h}{dt} = \sum_j \frac{dE_j^h}{dt} = - \sum_j \int_{F_j} 2 \mbox{Re} \Big(\gamma \big|[[v^h]]\big|^2 + \tau \big|[[\nabla u^h]]\big|^2\Big) \ dS,
\end{equation*}
where $F_j$ is an interelement boundary. If the flux parameters $\tau$ and $\gamma$ are non-negative, then the discrete energy is non-increasing such that
\begin{equation*}
    E^h(t) \le E^h(0)\quad  \forall\  0<t<T.
\end{equation*}

\end{truth}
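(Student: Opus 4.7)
The plan is to trace through exactly the computation that was sketched in the section preceding the theorem, closing every step so that only energy terms and boundary contributions remain. I would start from the semi-discrete weak forms \eqref{eq:21}--\eqref{eq:22}, substitute the test functions $\phi = \bar{u}^h$ and $\psi = \bar{v}^h$ (giving \eqref{eq:21_1}--\eqref{eq:22_1}), then add each equation to its complex conjugate to extract real parts. The two cross gradient terms $\nabla u^h \cdot \nabla \bar{v}^h$ and $\nabla \bar{u}^h \cdot \nabla v^h$ are complex conjugates of each other, so their real parts carry opposite signs in the summed identity and cancel exactly. The term $i\alpha|v^h|^2$ is purely imaginary and vanishes after taking the real part. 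What survives on the volume side is $\int_{\Omega_j} \tfrac{d}{dt}(|v^h|^2 + |\nabla u^h|^2)\,d\bx$, while the quadrature terms collapse via $2\mathrm{Re}(\bar{u}^h u^h_t) - 2\mathrm{Re}(\bar{u}^h v^h) + 2\mathrm{Re}(u^h \bar{v}^h) = \tfrac{d|u^h|^2}{dt}$, which produces precisely the nonlinear piece of $\frac{dE_j^h}{dt}$ in \eqref{dt_dis_energy}.

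Next I would sum over elements so that the right-hand side reduces, on each interelement face $F_j$, to $\int_{F_j} 2 J^h\,dS$ with $J^h$ defined in \eqref{eq:inn}. The remaining task is to evaluate $J^h$ on each face for the fluxes \eqref{eq:generalfl}. Writing
\[
v^\ast - v^h_1 = -(1-\mu)(v^h_1 - v^h_2) - \tau [[\nabla u^h]], \quad v^\ast - v^h_2 = \mu(v^h_1 - v^h_2) - \tau [[\nabla u^h]],
\]
and using ${\bf n}_1 = -{\bf n}_2$ together with the jump/average identities, the terms containing $\mu$ can be seen to recombine and cancel (they correspond to the conservative part of the flux, which contributes zero to the energy balance). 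Analogous manipulations with $(\nabla u)^\ast$ retire the $\mu$-dependent part and leave only the dissipative pieces. After collecting signs, one obtains
\[
J^h = -\mathrm{Re}\!\left(\gamma \big|[[v^h]]\big|^2 + \tau \big|[[\nabla u^h]]\big|^2\right),
\]
which is the desired identity for $\frac{dE^h}{dt}$.

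With this identity in hand, non-negativity of $\gamma$ and $\tau$ yields $\frac{dE^h}{dt} \le 0$; integrating from $0$ to $t$ gives $E^h(t) \le E^h(0)$. Since periodic boundary conditions ensure that boundary-of-$\Omega$ contributions drop out when forming the sum over interior faces $F_j$, no extra boundary handling is needed. The bookkeeping obstacle I anticipate is the second paragraph: verifying that all $\mu$-dependent cross terms in $J^h$ cancel exactly, and that the dissipative $\tau$- and $\gamma$-terms combine into $|[[\nabla u^h]]|^2$ and $|[[v^h]]|^2$ with the correct sign. Once this algebraic check is done carefully with $[[\nabla u^h]] = \nabla u^h_1 \cdot {\bf n}_1 + \nabla u^h_2 \cdot {\bf n}_2$ and $[[v^h]] = v^h_1 {\bf n}_1 + v^h_2 {\bf n}_2$, the theorem follows immediately.
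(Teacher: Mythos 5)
Your proposal is correct and follows essentially the same route as the paper: testing \eqref{eq:21}--\eqref{eq:22} with $\phi=\bar{u}^h$, $\psi=\bar{v}^h$, adding complex conjugates so the cross-gradient and $i\alpha|v^h|^2$ terms drop and the quadrature terms collapse to $\frac{d|u^h|^2}{dt}$, then pairing adjacent elements to reduce the face contribution to $J^h$ in \eqref{eq:inn} and checking that the $\mu$-dependent part of \eqref{eq:generalfl} is purely imaginary (hence vanishes under $\mathrm{Re}$), leaving $J^h=-\mathrm{Re}\big(\gamma|[[v^h]]|^2+\tau|[[\nabla u^h]]|^2\big)$. This is exactly the derivation the paper carries out in the text preceding the theorem.
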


%%%%%%%%%%%%%%%%%%%%%%%%%%%%%%%%%%%%%%%%%%%%%%%%%%%%%%%%%%%%%%%%%%
%%%%%%%%%%%%%%%%%%%%%%%%%%%%%%%%%%%%%%%%%%%%%%%%%%%%%%%%%%%%%%%%%%
\section{Error estimates}\label{sec:error}
%%%%%%%%%%%%%%%%%%%%%%%%%%%%%%%%%%%%%%%%%%%%%%%%%%%%%%%%%%%%%%%%%%
%%%%%%%%%%%%%%%%%%%%%%%%%%%%%%%%%%%%%%%%%%%%%%%%%%%%%%%%%%%%%%%%%%

To study the numerical error of the proposed EDG scheme, we introduce the errors as follows,
\begin{equation*}
    e_u = u - u^h, \quad e_v = v - v^h
\end{equation*}
and compare the EDG solution $(u^h, v^h)$ with an arbitrary polynomial $(\underline{u}^h, \underline{v}^h)$, where $(\underline{u}^h, \underline{v}^h) \in (U^q_h,V^s_h)$ with 
\begin{equation}\label{eq:degree}
q-2 \leq s \leq q.
\end{equation}
To continue the analysis, we introduce the differences
\begin{equation*}
    \underline{e}_u := \underline{u}^h - u^h, \quad \underline{e}_v := \underline{v}^h - v^h, \quad \delta_u := \underline{u}^h - u, \quad \delta_v := \underline{v}^h - v
\end{equation*}
and define the numerical error energy as
\begin{equation}\label{eq:err_energy}
    \mathcal{E} 
    := \sum_j \int_{\Omega_j} |\underline{e}_v|^2 + |\nabla \underline{e}_u|^2 \ d{\bf x}+ \sum_{j, {\bf k}}\int_0^{|\underline{e}_u({\bf x}_{j, {\bf k}})|^2} \omega_{j,{\bf k}}\beta({\bf x}_{j,{\bf k}}) f(|\underline{u}^h({\bf x}_{j, {\bf k}})|^2+z) dz.
\end{equation}
Here, for the positivity of the error energy $\mathcal{E}$, we assume that there is a positive constant $L$ such that 
\begin{equation}\label{eq:lower}
    \beta({\bf x}) f(|u|^2) \ge L. 
\end{equation}
We note that this assumption can be relaxed as discussed in Remark \ref{remark1}. The term $\int_0^{|\underline{e}_u({\bf x}_{j,{\bf k}})|^2}$ $ \omega_{j,{\bf k}}\beta({\bf x}_{j,{\bf k}}) f(|\underline{u}^h({\bf x}_{j,{\bf k}})|^2+z) dz$ is introduced to allow a bound on $|\underline{e}_u|^2$ in the error analysis. Specifically, we have
\begin{equation*}
    \int_0^{|\underline{e}_u({\bf x}_{j, {\bf k}})|^2}\omega_{j, {\bf k}} \beta({\bf x}_{j, {\bf k}}) f(|\underline{u}^h({\bf x}_{j, {\bf k}})|^2 + z) dz \ge L\omega_{j, {\bf k}} |\underline{e}_u({\bf x}_{j,{\bf k}})|^2.
\end{equation*}

On the other hand, since both the continuous solution $(u, v)$ and the EDG solution $(u^h, v^h)$ satisfy the equations ~\eqref{eq:21}--\eqref{eq:22}, we have the following equations 
\begin{equation}\label{eq:err1}
\begin{aligned}
    \int_{\Omega_j} \nabla \phi \cdot &\nabla \big((e_u)_t - e_v\big)\ d{\bf x} + \int_{\Omega_j} \beta({\bf x}) f(|u|^2) \phi (u_t  - v) \ d{\bf x} \\ 
    & - \sum_{\bf k}\omega_{j, {\bf k}}\beta({\bf x}_{j,{\bf k}}) f(|u^h({\bf x}_{j,{\bf k}})|^2) \phi({\bf x}_{j,{\bf k}}) (u^h_t({\bf x}_{j,{\bf k}})  - v^h\big({\bf x}_{j,{\bf k}})\big) = \int_{\partial \Omega_j} {\bf n} \cdot \nabla \phi \big(e_v^\ast - e_v\big)\ dS,
\end{aligned}
\end{equation}
and
\begin{equation}\label{eq:err2}
\begin{aligned}
\int_{\Omega_j} (e_v)_t \psi &+ \nabla e_u \cdot \nabla \psi + i \alpha e_v \psi\ d{\bf x} + \int_{\Omega_j} \beta({\bf x}) f(|u|^2)u\psi\ d{\bf x}\\
&- \sum_{\bf k} \omega_{j,{\bf k}} \beta({\bf x}_{j, {\bf k}}) f(|u^h({\bf x}_{j, {\bf k}})|^2) u^h({\bf x}_{j, {\bf k}}) \psi({\bf x}_{j, {\bf k}}) = \int_{\partial \Omega_j} {\bf n} \cdot (\nabla e_u)^* \psi\ dS,
\end{aligned}
\end{equation}
where integrals are used to replace discrete quadrature when the continuous solution $(u,v)$ is in place. Now, by choosing
\[\phi = \underline{e}_{\bar{u}} := \underline{\bar{u}}^h - \bar{u}^h, \quad \psi = \underline{e}_{\bar{v}} := \underline{\bar{v}}^h - \bar{v}^h\] 
in ~\eqref{eq:err1} and ~\eqref{eq:err2}, respectively, and invoking the relations
\begin{equation}\label{eq:error_relation}
e_u = \underline{e}_u - \delta_u, \quad e_v = \underline{e}_v - \delta_v, 
\end{equation}
we can then take the complex conjugate of the equations ~\eqref{eq:err1} and ~\eqref{eq:err2} and add the resulting equations with ~\eqref{eq:err1} and ~\eqref{eq:err2} to get 
\begin{align}
    & \int_{\Omega_j} 2\mbox{Re}\big(\nabla \underline{e}_{\bar{u}} \cdot \nabla ( \underline{e}_u )_t \big) + 2\mbox{Re}\big( ( \underline{e}_{v} )_t \underline{e}_{\bar{v}} \big) \ d{\bf x} 
    = \int_{\Omega_j} 2\mbox{Re}\Big(\nabla \underline{e}_{\overline{u}} \cdot \nabla \big((\delta_u)_t + \underline{e}_v - \delta_v\big) \Big) \ d{\bf x}\nonumber\\
    +& \!\int_{\Omega_j} \!\! 2\mbox{Re}\big( ( \delta_v )_t \underline{e}_{\bar{v}} \big)+2\mbox{Re}\big( \nabla (-\underline{e}_u + \delta_u) \cdot \nabla \underline{e}_{\bar{v}} \big) - 2\mbox{Im}\big( \alpha \delta_v \underline{e}_{\overline{v}} \big) 
    + 2\mbox{Re}\big( \beta({\bf x}) f(|u|^2) \underline{e}_{\bar{u}} (- u_t  + v) \big) \ d{\bf x} \nonumber\\
   - & \underbrace{\sum_{j,{\bf k}} 2\mbox{Re}\Big( \omega_{j,{\bf k}}\beta({\bf x}_{j,{\bf k}}) f(|u^h({\bf x}_{j, {\bf k}})|^2) \underline{e}_{\bar{u}}({\bf x}_{j,{\bf k}}) \big(-u^h_t({\bf x}_{j,{\bf k}})  + v^h({\bf x}_{j,{\bf k}}) \big) \Big)}_{\mathcal{N}_1}  \label{eq_full}\\
    + & \underbrace{\sum_{j,{\bf k}} 2\mbox{Re} \Big( \omega_{j,{\bf k}}\beta({\bf x}_{j,{\bf k}}) \big(- f(|u({\bf x}_{j,{\bf k}})|^2) u({\bf x}_{j,{\bf k}}) + f(|u^h({\bf x}_{j,{\bf k}})|^2) u^h({\bf x}_{j,{\bf k}})\big) \underline{e}_{\overline{v}}({\bf x}_{j,{\bf k}}) \Big)}_{\mathcal{N}_2}  \nonumber \\
    +& \int_{\partial \Omega_j} 2\mbox{Re}\big( {\bf n} \cdot \nabla \underline{e}_{\bar{u}} (\underline{e}_v^\ast - \delta_v^\ast - \underline{e}_v + \delta_v) \big) + 2\mbox{Re}\big( {\bf n} \cdot (\nabla (\underline{e}_u - \delta_u))^\ast \underline{e}_{\bar{v}} \big) \ dS,\nonumber
\end{align}
where $\mbox{Im}(w)$ is the imaginary part of the complex-valued function $w$. Note that we can further integrate by parts on the volume integral $\int_{\Omega_j}2\mbox{Re} \big( \nabla \underline{e}_{\overline{u}} \cdot \nabla \delta_v \big) d{\bf x}$ to collect
\begin{align}
    & \int_{\Omega_j} 2\mbox{Re}\big(\nabla \underline{e}_{\bar{u}} \cdot \nabla ( \underline{e}_u )_t \big) + 2\mbox{Re}\big( ( \underline{e}_{v} )_t \underline{e}_{\bar{v}} \big) \ d{\bf x} 
    = \int_{\Omega_j} 2\mbox{Re}\big(\nabla \underline{e}_{\bar{u}} \cdot \nabla ((\delta_u)_t + \underline{e}_v) \big) + 2\mbox{Re}\big( \Delta \underline{e}_{\bar{u}} \delta_v \big)\ d{\bf x} \nonumber\\
    +& \!\int_{\Omega_j} \!\! 2\mbox{Re}\big( ( \delta_v )_t \underline{e}_{\bar{v}} \big) + 2\mbox{Re}\big( \nabla (-\underline{e}_u + \delta_u) \cdot \nabla \underline{e}_{\bar{v}} \big) - 2\mbox{Im}\big( \alpha \delta_v \underline{e}_{\overline{v}} \big) 
    +2\mbox{Re}\big( \beta({\bf x}) f(|u|^2) \underline{e}_{\bar{u}} (- u_t  + v) \big) \ d{\bf x} \nonumber\\
    - & \mathcal{N}_1 + \mathcal{N}_2 
    +\int_{\partial \Omega_j} 2\mbox{Re}\big( {\bf n} \cdot \nabla \underline{e}_{\bar{u}} (\underline{e}_v^\ast - \delta_v^\ast - \underline{e}_v ) \big) + 2\mbox{Re}\big( {\bf n} \cdot (\nabla (\underline{e}_u - \delta_u))^\ast \underline{e}_{\bar{v}} \big) \ dS.\label{eq:err_simplify}
\end{align}
Here, $\mathcal{N}_1$ and $\mathcal{N}_2$ are defined in (\ref{eq_full}).

To get an acceptable error, we choose appropriate $(\underline{u}^h, \underline{v}^h)$, and for simplicity, we will assume in the following that $(u^h, v^h) = (\underline{u}^h, \underline{v}^h)$ at $t=0$. In the numerical experiments, however, we do not satisfy this condition. On each $\Omega_j$, we apply the following conditions for all times $t$ and for all $\phi \in U^q_h$ and $\psi \in V^s_h$,
\begin{equation}\label{proj}
    \int_{\Omega_j} \nabla \phi \cdot \nabla \delta_u \ d{\bf x} = 0, \ \int_{\Omega_j} \delta_u \ d{\bf x} = 0, \ 
    \int_{\Omega_j} \psi \delta_{v} \ d{\bf x} = 0.
\end{equation}
The $H^1$ projection equation for $\underline{u}^h$ and the $L^2$ projection equation for $\underline{v}^h$ are solvable by counting and uniqueness arguments. For a detailed discussion, see \cite{appelo2015new,appelo2020energy}. Since $q-2 \leq s \leq q$ is given in ~\eqref{eq:degree}, the first to fifth terms on the right side of ~\eqref{eq:err_simplify} vanish due to the special projection ~\eqref{proj}. This leads to the following equation,
\begin{equation}\label{eq:err_simplify2}
\begin{aligned}
    & \int_{\Omega_j} 2\mbox{Re}\big(\nabla \underline{e}_{\bar{u}} \cdot \nabla ( \underline{e}_u )_t \big) + 2\mbox{Re}\big( ( \underline{e}_{v} )_t \underline{e}_{\bar{v}} \big) \ d{\bf x} 
    = \int_{\Omega_j}  
    2\mbox{Re}\big( \beta({\bf x}) f(|u|^2) \underline{e}_{\bar{u}} (- u_t  + v) \big) \ d{\bf x} \\
    - & \mathcal{N}_1 + \mathcal{N}_2 + \int_{\partial \Omega_j} 2\mbox{Re}\big( {\bf n} \cdot \nabla \underline{e}_{\bar{u}} (\underline{e}_v^\ast - \delta_v^\ast - \underline{e}_v ) \big) + 2\mbox{Re}\big( {\bf n} \cdot (\nabla (\underline{e}_u - \delta_u))^\ast \underline{e}_{\bar{v}} \big) \ dS.
\end{aligned}
\end{equation}
Moreover, by taking advantage of the fact that $u_t - v = 0$, the equation ~\eqref{eq:err_simplify2} yields
\begin{equation}\label{eq:err_simplify3}
\begin{aligned}
    & \int_{\Omega_j} 2\mbox{Re}\big(\nabla \underline{e}_{\bar{u}} \cdot \nabla ( \underline{e}_u )_t \big) + 2\mbox{Re}\big( ( \underline{e}_{v} )_t \underline{e}_{\bar{v}} \big) \ d{\bf x} 
     \\
     =  & -\sum_{j,{\bf k}} 2\mbox{Re}\Big(\omega_{j,{\bf k}} \beta({\bf x}_{j,{\bf k}}) f(|u^h({\bf x}_{j,{\bf k}})|^2) \underline{e}_{\bar{u}}({\bf x}_{j,{\bf k}}) \big( (\underline{e}_u)_t({\bf x}_{j,{\bf k}})  - \underline{e}_v({\bf x}_{j,{\bf k}})\big) \Big) \\
      & + \sum_{j,{\bf k}}2\mbox{Re}\Big( \omega_{j,{\bf k}}\beta({\bf x}_{j,{\bf k}}) f(|u^h({\bf x}_{j,{\bf k}})|^2) \underline{e}_{\bar{u}}({\bf x}_{j,{\bf k}}) \big( (\delta_u)_t({\bf x}_{j,{\bf k}})  - \delta_v({\bf x}_{j,{\bf k}})\big) \Big)  \\
      &+\mathcal{N}_2+\int_{\partial \Omega_j} 2\mbox{Re}\big( {\bf n} \cdot \nabla \underline{e}_{\bar{u}} ((\underline{e}_v)^\ast - (\delta_v)^\ast - \underline{e}_v ) \big) + 2\mbox{Re}\big( {\bf n} \cdot (\nabla (\underline{e}_u - \delta_u))^\ast \underline{e}_{\bar{v}} \big) \ dS.
\end{aligned}
\end{equation}
To further simplify the equation ~\eqref{eq:err_simplify3}, we first rewrite the first term on its right side as
\begin{equation}\label{eq:auxillary}
\begin{aligned}
    & -\sum_{j,{\bf k}} 2\mbox{Re}\Big(\omega_{j,{\bf k}} \beta({\bf x}_{j,{\bf k}}) f(|u^h({\bf x}_{j,{\bf k}})|^2) \underline{e}_{\bar{u}}({\bf x}_{j,{\bf k}}) \big( (\underline{e}_u)_t({\bf x}_{j,{\bf k}})  - \underline{e}_v({\bf x}_{j,{\bf k}})\big) \Big)\\
    = &\sum_{j,{\bf k}} 2\mbox{Re}\big(\omega_{j,{\bf k}}\beta({\bf x}_{j,{\bf k}}) f(|u^h({\bf x}_{j,{\bf k}})|^2) \underline{e}_{\bar{u}}({\bf x}_{j,{\bf k}})\underline{e}_v({\bf x}_{j,{\bf k}}) \big) \\
    &- \frac{d}{dt} \sum_{j,{\bf k}} \int_0^{|\underline{e}_u({\bf x}_{j,{\bf k}})|^2} \omega_{j,{\bf k}}\beta({\bf x}_{j,{\bf k}}) f(|\underline{u}^h({\bf x}_{j,{\bf k}})|^2 + z) \ dz  \\
    &+ \sum_{j,{\bf k}} \int_0^{|\underline{e}_u({\bf x}_{j,{\bf k}})|^2} \omega_{j,{\bf k}}\beta({\bf x}_{j,{\bf k}}) \Big(\frac{d}{dt} f(|\underline{u}^h({\bf x}_{j,{\bf k}})|^2 + z)\Big) \ dz  \\
    &+\sum_{j,{\bf k}} 4\omega_{j,{\bf k}}\beta({\bf x}_{j,{\bf k}}) \left(\frac{df(w)}{dw}\Big|_{w = \widehat{w}} \mbox{Re}\big(\underline{u}^h({\bf x}_{j,{\bf k}})\underline{e}_{\bar{u}}({\bf x}_{j,{\bf k}})\big)\right)\mbox{Re}\big(\underline{e}_{\bar{u}}({\bf x}_{j,{\bf k}}) (\underline{e}_u)_t({\bf x}_{j,{\bf k}})\big),
\end{aligned}
\end{equation}
where we have used the relation
 $ |\underline{u}^h|^2 + |\underline{e}_u|^2 = |u^h|^2 + 2 \mbox{Re}(\underline{u}^h \underline{e}_{\bar{u}})$
to generate the second term on the right-hand side of ~\eqref{eq:auxillary}. In addition, $\widehat{w}$ in the fourth term of the right-hand side of ~\eqref{eq:auxillary} is given by $\widehat{w} = |\underline{u}^h({\bf x}_{j,{\bf k}})|^2 + \theta_1|\underline{e}_u({\bf x}_{j,{\bf k}})|^2$ with $\theta_1 \in (0, 1)$.  Then combining ~\eqref{eq:auxillary} with ~\eqref{eq:err_energy} and ~\eqref{eq:err_simplify3} yields
\begin{equation}\label{eq:err_simplify4}
\begin{aligned}
\frac{d\mathcal{E}}{dt} &= \sum_j \!\!\int_{\Omega_j} \!\!\!2\mbox{Re}\big(\underline{e}_{\bar{v}}(\underline{e}_{v})_t\big) + 2\mbox{Re}\big(\nabla \underline{e}_{\bar{u}} \cdot (\nabla \underline{e}_{u})_t\big)\ d{\bf x} + \sum_{j,{\bf k}}\frac{d}{dt} \int_0^{|\underline{e}_u({\bf x}_{j,{\bf k}})|^2} \!\!\!\!\!\!\!\!\!\!\!\!\!\omega_{j,{\bf k}}\beta({\bf x}_{j,{\bf k}}) f(|\underline{u}^h({\bf x}_{j,{\bf k}})|^2+z) dz \\
&= \mathcal{V} +\sum_j\int_{\partial \Omega_j} 2\mbox{Re}\big( {\bf n} \cdot \nabla \underline{e}_{\bar{u}} ((\underline{e}_v)^\ast - (\delta_v)^\ast - \underline{e}_v ) \big) + 2\mbox{Re}\big( {\bf n} \cdot (\nabla (\underline{e}_u - \delta_u))^\ast \underline{e}_{\bar{v}} \big) \ dS,
\end{aligned}
\end{equation}
where
\begin{equation}\label{volume_int}
\begin{aligned}
\mathcal{V} := & \mathcal{N}_2 
+ \sum_{j,{\bf k}} 4\omega_{j,{\bf k}}\beta({\bf x}_{j,{\bf k}}) \left(\frac{df(w)}{dw}\Big|_{w = \widehat{w}}\mbox{Re}\big(\underline{u}^h({\bf x}_{j,{\bf k}})\underline{e}_{\bar{u}}({\bf x}_{j,{\bf k}})\big)\right)\mbox{Re}\big(\underline{e}_{\bar{u}}({\bf x}_{j,{\bf k}}) (\underline{e}_u)_t({\bf x}_{j,{\bf k}})\big)\\
&+ \sum_{j,{\bf k}}2\mbox{Re}\Big( \omega_{j,{\bf k}}\beta({\bf x}_{j,{\bf k}}) f(|u^h({\bf x}_{j,{\bf k}})|^2) \underline{e}_{\bar{u}}({\bf x}_{j,{\bf k}}) \big( (\delta_u)_t({\bf x}_{j,{\bf k}})  - \delta_v({\bf x}_{j,{\bf k}})\big) \Big) \\
      & + \sum_{j,{\bf k}} 2\mbox{Re}\big(\omega_{j,{\bf k}}\beta({\bf x}_{j,{\bf k}}) f(|u^h({\bf x}_{j,{\bf k}})|^2) \underline{e}_{\bar{u}}({\bf x}_{j,{\bf k}})\underline{e}_v({\bf x}_{j,{\bf k}}) \big) \\
     & + \sum_{j,{\bf k}} \int_0^{|\underline{e}_u({\bf x}_{j,{\bf k}})|^2} \omega_{j,{\bf k}}\beta({\bf x}_{j,{\bf k}}) \Big(\frac{d}{dt} f(|\underline{u}^h({\bf x}_{j,{\bf k}})|^2 + z)\Big) dz.
      \end{aligned}
\end{equation}
By combining the effects of adjacent elements, similar to the derivation of Theorem \ref{thm1}, we can rewrite the boundary terms in equation ~\eqref{eq:err_simplify4} to obtain
\begin{equation}\label{eq:err_simplify5}
\begin{aligned}
\frac{d\mathcal{E}}{dt} &= \sum_j \!\!\int_{\Omega_j} \!\!\!2\mbox{Re}\big(\underline{e}_{\bar{v}}(\underline{e}_{v})_t\big) + 2\mbox{Re}\big(\nabla \underline{e}_{\bar{u}} \cdot (\nabla \underline{e}_{u})_t\big)\ d{\bf x} + \sum_{j,{\bf k}}\frac{d}{dt} \int_0^{|\underline{e}_u({\bf x}_{j,{\bf k}})|^2} \!\!\!\!\!\!\!\!\!\!\!\!\!\omega_{j,{\bf k}}\beta({\bf x}_{j,{\bf k}}) f(|\underline{u}^h({\bf x}_{j,{\bf k}})|^2+z) dz \\
&= \mathcal{V}  - \sum_j\int_{F_j} 2 \mbox{Re} \big(\gamma \big|[[\underline{e}_v]]\big|^2 + \tau \big|[[\nabla \underline{e}_{u}]]\big|^2\big)+ 2 \mbox{Re} \big([[\nabla \underline{e}_{\bar{u}}]] \delta_v^\ast + (\nabla \delta_{u})^\ast \cdot [[\underline{e}_{\bar{v}}]]\big)\  dS,
\end{aligned}
\end{equation}
where, again, $F_j$ represents the interelement boundaries, and $\mathcal{V}$ is defined in ~\eqref{volume_int}.

In the following discussion, let $C$ be a universal constant independent of the diameter $h$ of a shape-regular mesh, although its value may vary from line to line. Sobolev norms are denoted by $\|\cdot\|$, while seminorms are denoted by $\vert \cdot \vert$. We assume that the solution is smooth enough up to some time $T$ and that $f(w)$, $\beta({\bf x})f(w)$, and $\beta({\bf x})\frac{df}{dw} (w)$ are bounded. While it is sufficient that these bounds hold in some neighbourhood of the actual values of $u$ up to time $T$, for simplicity, we will assume that they hold for all $w$. To simplify the statement of the primary theorem, we assume that $s > \frac{d}{2} - 1$, which implies that the functions in $H^{s+1}(\Omega)$ are continuous. Moreover, since $s \leq q$, all functions in $H^{q+1}(\Omega)$ are also continuous. Given these assumptions, the following error estimate holds.

\begin{truth}\label{truth2}
Suppose $f(w)$ is a smooth bounded function that satisfies the lower bound ~\eqref{eq:lower} and the Assumption \ref{assump} holds. Let $\widehat{q} = \min(q-1, s)$, where $q-2 \leq s \leq q$. Assuming that $\|u\|_{L^\infty([0, T], H^{q+1}(\Omega))}$ and $\|v\|_{L^\infty([0, T], H^{s+1}(\Omega))}$ are bounded, along with $\beta({\bf x})f(w)$, $f(w)$, and $\beta({\bf x})\frac{df(w)}{dw}$, there exist constants $C_0$ and $C_1$ which depend only on $s$, $q$, $\gamma$, $\tau$, and the shape regularity of the mesh, but are independent of $h$, such that
\begin{equation}\label{eq:thm2}
\|\nabla e_u(\cdot, t)\|_{L^2(\Omega)}^2 + \|e_v(\cdot, t)\|_{L^2(\Omega)}^2 \leq C_0e^{C_1t}h^{2\zeta}\quad \forall\ 0\leq t\leq T,
\end{equation}
where $\zeta = \widehat{q}$ when $\gamma,\tau \geq 0$, and $\zeta = \widehat{q} + \frac{1}{2}$ when $\gamma,\tau > 0$. 

%\[\zeta = \left\{
%\begin{aligned}
%\widehat{q}, \quad &\gamma,\tau \geq 0,\\
%\widehat{q} + \frac{1}{2}, \quad &\gamma,\tau > 0.
%\end{aligned}
%\right.\]

\end{truth}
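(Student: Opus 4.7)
The plan is to apply a Gronwall argument to the error energy evolution equation~\eqref{eq:err_simplify5}. Since the discrete initial condition is chosen so that $(u^h,v^h)=(\underline{u}^h,\underline{v}^h)$ at $t=0$, we have $\mathcal{E}(0)=0$; the goal is therefore to show $d\mathcal{E}/dt \le C_1 \mathcal{E} + C_0 h^{2\zeta}$, from which Gronwall yields $\mathcal{E}(t) \le (C_0/C_1)(e^{C_1 t}-1)h^{2\zeta}$. The lower bound~\eqref{eq:lower} together with Assumption~\ref{assump} then controls $\sum_j \|\nabla \underline{e}_u\|_{L^2(\Omega_j)}^2 + \|\underline{e}_v\|_{L^2(\Omega_j)}^2$ by $\mathcal{E}$, and the triangle-inequality decomposition $e_u = \underline{e}_u - \delta_u$, $e_v = \underline{e}_v - \delta_v$ combined with standard approximation estimates for the projections~\eqref{proj} delivers the stated bound~\eqref{eq:thm2}.

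For the boundary contribution in~\eqref{eq:err_simplify5}, I would substitute the flux formulas~\eqref{eq:generalfl} and split the surface integrals into jump, average, and projection-error pieces. Applying the trace inequality $\|\phi\|_{L^2(\partial \Omega_j)} \le C(h^{-1/2}\|\phi\|_{L^2(\Omega_j)}+h^{1/2}\|\nabla\phi\|_{L^2(\Omega_j)})$ and Young's inequality, one can absorb factors of $\|\nabla \underline{e}_u\|_{L^2}$ and $\|\underline{e}_v\|_{L^2}$ into $\mathcal{E}$, while the remaining factors are controlled by $\|\delta_v\|_{L^2}\le C h^{s+1}|v|_{H^{s+1}}$ and $\|\nabla\delta_u\|_{L^2}\le C h^{q}|u|_{H^{q+1}}$. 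This yields the rate $\zeta = \widehat{q}$ for general non-negative $\gamma,\tau$. When $\gamma,\tau>0$, I would exploit the dissipative terms $-\gamma\big|[[\underline{e}_v]]\big|^2-\tau\big|[[\nabla\underline{e}_u]]\big|^2$ already present on the right-hand side of~\eqref{eq:err_simplify5} to absorb the jump factors directly, recovering an extra half-power of $h$ and yielding $\zeta = \widehat{q}+\tfrac12$.

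The volume term $\mathcal{V}$ in~\eqref{volume_int} splits into several pieces. The nonlinear piece $\mathcal{N}_2$ can be controlled by exploiting the Lipschitz continuity of $w\mapsto \beta({\bf x})f(|w|^2)w$ on bounded sets (which follows from the boundedness hypotheses on $f$ and $df/dw$) and converting the quadrature sum to an integral with error $O(h^{q+1})$ via Assumption~\ref{assump}. The remaining pieces involving $(\delta_u)_t$, $\delta_v$, and the bilinear term $\underline{e}_v\underline{e}_{\bar{u}}$ are dispatched by Cauchy--Schwarz, the projection estimates, and Young's inequality, contributing terms of the form $C\mathcal{E} + C h^{2\zeta}$.

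The main obstacle is the term $\sum_{j,{\bf k}} 4\omega_{j,{\bf k}}\beta({\bf x}_{j,{\bf k}})\big(\tfrac{df}{dw}\big|_{\widehat{w}}\mbox{Re}(\underline{u}^h\underline{e}_{\bar u})\big)\mbox{Re}\big(\underline{e}_{\bar u}(\underline{e}_u)_t\big)$ in $\mathcal{V}$, which carries the time derivative $(\underline{e}_u)_t$ of the discrete displacement error inside a volume sum and so cannot be bounded by the energy $\mathcal{E}$ alone. Following the strategy flagged in the paper's abstract, I would use the mean-value condition~\eqref{eq:mean_value} together with the full auxiliary equation~\eqref{eq:21}, tested against an appropriate basis of $U_h^q$, to express $u^h_t$ as the solution of a linear system on each $\Omega_j$ whose right-hand side is affine in $v^h$ and in the boundary data. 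Subtracting the corresponding exact identity $u_t=v$ and invoking inverse inequalities on the polynomial space $\mathcal{P}^q(\Omega_j)$ then produces an elementwise estimate of the form $\|(\underline{e}_u)_t\|_{L^2(\Omega_j)} \le C\big(\|\underline{e}_v\|_{L^2(\Omega_j)}+\|\nabla\underline{e}_u\|_{L^2(\Omega_j)}\big)+C h^{\widehat{q}+1}$. Inserting this into the offending term and applying Young's inequality once more closes the inequality as $d\mathcal{E}/dt \le C_1\mathcal{E}+C_0 h^{2\zeta}$, and Gronwall finishes the proof.
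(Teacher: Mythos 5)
Your proposal is correct and follows essentially the same route as the paper's proof: a Gronwall argument on the error-energy identity \eqref{eq:err_simplify5}, with the projection properties \eqref{proj} eliminating the leading terms, the nonlinear quadrature sums handled by the mean-value/Lipschitz argument under Assumption \ref{assump}, the two-case treatment of the interface terms (dropping the non-positive jumps when $\gamma\tau=0$ versus absorbing them by Young's inequality when $\gamma,\tau>0$ to gain the extra $h^{1/2}$), and, crucially, the same key step of combining the auxiliary equation \eqref{eq:21} with the mean-value condition \eqref{eq:mean_value} to obtain the elementwise $L^2$ bound on $(\underline{e}_u)_t$ (the paper realizes this by testing with $\phi=(e_{\bar u})_t-e_{\bar v}$ and pairing a trace inequality with a Poincar\'e inequality, which is the concrete form of your linear-system/inverse-inequality argument). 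The only detail you gloss over, which the paper also treats lightly, is that the term quadratic in $\underline{e}_u$ multiplying $(\underline{e}_u)_t$ requires the smallness bound $\int_{\Omega_j}|\underline{e}_u|^4\,d{\bf x}\leq C\int_{\Omega_j}|\underline{e}_u|^2\,d{\bf x}$ to be absorbed into $C\mathcal{E}$.
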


\begin{proof}
From the Bramble-Hilbert lemma (e.g., \cite{ciarlet2002finite}), we have
\begin{equation}\label{lemma_BH}
\begin{aligned}
&\|\delta_u\|_{L^2(\Omega_j)}^2 \leq Ch^{2q+2}|u(\cdot, t)|_{H^{q+1}(\Omega_j)}^2,\quad \|\delta_v\|_{L^2(\Omega_j)}^2 \leq Ch^{2s+2} |v(\cdot, t)|_{H^{s+1}(\Omega_j)}^2,\\
& \Big\Vert \frac{\partial \delta_u}{\partial t}\Big\Vert_{H^{q+1}(\Omega_j)} \leq Ch^{2q+2}\Big| \frac{\partial u(\cdot, t)}{\partial t}\Big|_{H^{q+1}(\Omega_j)}.
\end{aligned}
\end{equation}
We also point out that since our approximations are piecewise polynomials and the assumption that $u$ and $v$ are bounded, we can establish the following inequalities,
\[\|\underline{u}^h\|_{L^\infty(\Omega_j)} \leq C, \quad \Big\Vert \frac{\partial \underline{u}^h}{\partial t}\Big\Vert_{L^\infty(\Omega_j)} \leq C.\]

Starting with the estimation of $\mathcal{N}_2$ in ~\eqref{volume_int}, we use the mean value theorem for $f(w)$, together with the Assumption \ref{assump}, the inequalities ~\eqref{lemma_BH} and the Cauchy-Schwartz inequality to obtain
\begin{equation}\label{eq:auxiliary2}
\begin{aligned}
\mathcal{N}_2
   % = & \sum_{j,{\bf k}}  2\mbox{Re}\Big(\omega_{j,{\bf k}}\beta({\bf x}_{j,{\bf k}}) \big(- f(|u({\bf x}_{j,{\bf k}})|^2) u({\bf x}_{j,{\bf k}}) + f(|u^h({\bf x}_{j,{\bf k}})|^2) u({\bf x}_{j,{\bf k}}) - f(|u^h({\bf x}_{j,{\bf k}})|^2) u({\bf x}_{j,{\bf k}}) \\
   % & \hspace{0.6cm}+  f(|u^h({\bf x}_{j,{\bf k}})|^2) u^h({\bf x}_{j,{\bf k}})\big) \underline{e}_{\bar{v}} ({\bf x}_{j,{\bf k}})\Big)  \\
   % = & \sum_{j,{\bf k}} 2\mbox{Re}\Big( \omega_{j,{\bf k}}\beta({\bf x}_{j,{\bf k}}) \Big( \frac{d f(w)}{dw}\Big|_{w = u({\bf x}_{j,{\bf k}}) + \theta_2 e_u({\bf x}_{j,{\bf k}})} \big(|u^h({\bf x}_{j,{\bf k}})|^2 - |u({\bf x}_{j,{\bf k}})|^2\big) u({\bf x}_{j,{\bf k}}) \\
   % & \hspace{0.6cm}- f(|u^h({\bf x}_{j,{\bf k}})|^2)e_u({\bf x}_{j,{\bf k}}) \Big) \underline{e}_{\bar{v}}({\bf x}_{j,{\bf k}}) \Big)\\
    &=  \sum_{j,{\bf k}} 2\mbox{Re}\Bigg[\omega_{j,{\bf k}}\beta({\bf x}_{j,{\bf k}}) \Bigg( \frac{d f(w)}{dw}\Big|_{w = \widetilde{w}} \Big( |e_u({\bf x}_{j,{\bf k}})|^2 - 2 \mbox{Re} \big(u({\bf x}_{j,{\bf k}}) e_{\bar{u}}({\bf x}_{j,{\bf k}})\big) \Big) u({\bf x}_{j,{\bf k}}) \\
    & \hspace{0.6cm}- f(|u^h({\bf x}_{j,{\bf k}})|^2) \big(\underline{e}_u({\bf x}_{j,{\bf k}}) - \delta_u({\bf x}_{j,{\bf k}})\big) \Bigg) \underline{e}_{\bar{v}}({\bf x}_{j,{\bf k}}) \Bigg]\\
    \leq & \sum_{j,{\bf k}} 2\omega_{j,{\bf k}} \mbox{max}\Big|\beta\frac{df(w)}{dw}\Big|\mbox{max}|u| \big|\big(\underline{e}_{\bar{u}}({\bf x}_{j,{\bf k}}) - \delta_{\bar{u}}({\bf x}_{j,{\bf k}})\big)\big(\underline{e}_{u}({\bf x}_{j,{\bf k}}) - \delta_{u}({\bf x}_{j,{\bf k}})\big)\underline{e}_{v}({\bf x}_{j,{\bf k}})\big| \\
    &\hspace{0.6cm}+ 2\omega_{j,{\bf k}}\mbox{max}\Big|\beta \frac{df(w)}{dw}\Big|\Big(\mbox{max}|u^2||\underline{e}_{\bar{u}}({\bf x}_{j,{\bf k}})\underline{e}_{v}({\bf x}_{j,{\bf k}})|  + \mbox{max}|u|^2 |\underline{e}_{\bar{u}}({\bf x}_{j,{\bf k}})\underline{e}_{v}({\bf x}_{j,{\bf k}})|\Big)\\
    &\hspace{0.6cm}+ 2\omega_{j,{\bf k}}\mbox{max}\Big|\beta \frac{df(w)}{dw}\Big|\Big(\mbox{max}|u^2||\delta_{\bar{u}}({\bf x}_{j,{\bf k}})\underline{e}_{v}({\bf x}_{j,{\bf k}})|  + \mbox{max}|u|^2 |\delta_{\bar{u}}({\bf x}_{j,{\bf k}})\underline{e}_{v}({\bf x}_{j,{\bf k}})|\Big)\\
    &\hspace{0.6cm} + 2\omega_{j,{\bf k}} \mbox{max}|\beta f(w)| \Big(|\underline{e}_u({\bf x}_{j,{\bf k}})\underline{e}_{v}({\bf x}_{j,{\bf k}})| + |\delta_{u}({\bf x}_{j,{\bf k}})\underline{e}_{v}({\bf x}_{j,{\bf k}})|\Big)\\
    \leq & C\big(1+h^{q+1}|u(\cdot,t)|_{H^{q+1}(\Omega)} \big)\mathcal{E}  + C (h^{q+1} + h^{2q+2}) \sqrt{\mathcal{E}} |u(\cdot,t)|_{H^{q+1}(\Omega)},
\end{aligned}
\end{equation}
where $\widetilde{w} = |u({\bf x}_{j,{\bf k}})|^2 + \theta_2 |e_u({\bf x}_{j,{\bf k}})|^2$ for some $\theta_2 \in (0,1)$. Note that the first equality in ~\eqref{eq:auxiliary2} applies
\[|u^h|^2 - |u|^2 = |e_u|^2 - 2\mbox{Re}(ue_{\bar{u}})\quad  \mbox{and} \quad e_u = \underline{e}_{u} - \delta_u.\]

Similar as above, we can estimate the third to fifth nonlinear volume integrals on the right side of ~\eqref{volume_int} as
\begin{equation}\label{eq:auxillary7}
\begin{aligned}
&\sum_{j,{\bf k}} 2\mbox{Re}\Big( \omega_{j,{\bf k}}\beta({\bf x}_{j,{\bf k}}) f(|u^h({\bf x}_{j,{\bf k}})|^2) \underline{e}_{\bar{u}}({\bf x}_{j,{\bf k}}) \big( (\delta_u)_t({\bf x}_{j,{\bf k}})  - \delta_v({\bf x}_{j,{\bf k}})\big) \Big) \\
+&\sum_{j,{\bf k}} 2\mbox{Re}\big(\omega_{j,{\bf k}}\beta({\bf x}_{j,{\bf k}}) f(|u^h({\bf x}_{j,{\bf k}})|^2) \underline{e}_{\bar{u}}({\bf x}_{j,{\bf k}})\underline{e}_v({\bf x}_{j,{\bf k}}) \big)  + \int_0^{|\underline{e}_u({\bf x}_{j,{\bf k}})|^2} \omega_{j,{\bf k}}\beta({\bf x}_{j,{\bf k}}) \Big(\frac{d}{dt} f(|\underline{u}^h({\bf x}_{j,{\bf k}})|^2 + z)\Big) dz  \\
\leq & \sum_{j,{\bf k}} 2\omega_{j,{\bf k}}\mbox{max}\Big| \beta f(w) \Big|\Big( \big|\underline{e}_{{u}}({\bf x}_{j,{\bf k}}) (\delta_u)_t({\bf x}_{j,{\bf k}})\big|  + \big|\underline{e}_{{u}}({\bf x}_{j,{\bf k}})\delta_v({\bf x}_{j,{\bf k}})\big| \Big)  \\
 + &\sum_{j,{\bf k}}\omega_{j,{\bf k}}\mbox{max}\Big|\beta\frac{df(w)}{dw}\Big|\mbox{max}\Big|\frac{d|\underline{u}^h|^2}{dt}\Big||\underline{e}_u({\bf x}_{j,{\bf k}})|^2 + \sum_{j,{\bf k}} 2\omega_{j,{\bf k}}\mbox{max}\big|\beta f(w)\big| |\underline{e}_{{u}}({\bf x}_{j,k})\underline{e}_v({\bf x}_{j,{\bf k}})|\\
\leq & C\mathcal{E} + Ch^{\min\{q+1, s+1\}}\sqrt{\mathcal{E}}\left(\Big|\frac{\partial u(\cdot, t)}{\partial t}\Big|_{H^{q+1}(\Omega_j)} + |v(\cdot, t)|_{H^{s+1}(\Omega_j)}\right).
\end{aligned}
\end{equation}

To evaluate the second nonlinear volume integral on the right side of ~\eqref{volume_int}, which includes $\underline{e}_{\bar{u}}(\underline{e}_u)_t$, it's necessary to find an upper bound for $\|(\underline{e}_u)_t\|_{L^2(\Omega_j)}$. To do this, we examine the EDG formulation ~\eqref{eq:21} with the test function $\phi$ replaced by $\phi = (e_{\bar{u}})_t - e_{\bar{v}}$, i.e.,
\begin{equation}\label{eq:est_et1}
\begin{aligned}
&\int_{\Omega_j} \nabla \big((e_{\bar{u}})_t - e_{\bar{v}}\big) \cdot \nabla (u^h_t - v^h)\ d{\bf x} \\
&+ \sum_{j,{\bf k}}\omega_{j,{\bf k}}\beta({\bf x}_{j,{\bf k}}) f(|u^h({\bf x}_{j,{\bf k}})|^2) \big((e_{\bar{u}})_t({\bf x}_{j, {\bf k}}) - e_{\bar{v}}({\bf x}_{j,{\bf k}})\big) (u^h_t({\bf x}_{j,{\bf k}}) - v^h({\bf x}_{j,{\bf k}}) \\
= &\int_{\partial \Omega_j} {\bf n} \cdot \nabla \big((e_{\bar{u}})_t - e_{\bar{v}}\big) (v^\ast - v^h)\ dS.
\end{aligned}
\end{equation}
With the fact that $u_t - v = 0$ given in ~\eqref{system}, we can rewrite equation ~\eqref{eq:est_et1} into
\begin{equation}\label{eq:est_et}
\begin{aligned}
&\int_{\Omega_j} \nabla \big((e_{\bar{u}})_t - e_{\bar{v}}\big) \cdot \nabla \big((e_{u})_t - e_{v}\big)\ d{\bf x} \\
&+ \sum_{j,{\bf k}}\omega_{j,{\bf k}}\beta({\bf x}_{j,{\bf k}}) f(|u^h({\bf x}_{j,{\bf k}})|^2) \big((e_{\bar{u}})_t({\bf x}_{j, {\bf k}}) - e_{\bar{v}}({\bf x}_{j,{\bf k}})\big) \big((e_{u})_t({\bf x}_{j,{\bf k}}) - e_{v}({\bf x}_{j,{\bf k}})\big) \\
= &\int_{\partial \Omega_j} {\bf n} \cdot \nabla \big((e_{\bar{u}})_t - e_{\bar{v}}\big) (e_v^\ast - e_v^h)\ dS.
\end{aligned}
\end{equation}
Furthermore, by invoking the condition ~\eqref{eq:lower}, we get
\begin{equation}\label{eq:auxiliary3}
\|\nabla \big((e_{u})_t - e_{v}\big) \|_{L^2(\Omega_j)}^2
\leq Ch^{-1} \|\nabla \big((e_{u})_t - e_{v}\big) \|_{L^2(\Omega_j)}\Big(\|e_v^\ast\|_{L^2(\Omega_j)} + \|e_v^h\|_{L^2(\Omega_j)}\Big),
\end{equation}
where we used the trace inequality to estimate the boundary integrals. On the other hand, using the first equation in ~\eqref{system} and ~\eqref{eq:mean_value}, we have
$\int_{\Omega_j} (e_{u})_t - e_v\ d{\bf x} = 0,$ which implies Poincar'{e} inequality
\[\|\nabla \big((e_{u})_t - e_{v}\big) \|_{L^2(\Omega_j)} \geq Ch^{-1} \|(e_{u})_t - e_{v} \|_{L^2(\Omega_j)}.\]
Substituting this into ~\eqref{eq:auxiliary3}, we get that
\begin{equation}\label{eq:auxiliary4}
\|(e_{u})_t - e_{v} \|_{L^2(\Omega_j)}
\leq  C\Big(\|e_v^\ast\|_{L^2(\Omega_j)} + \|e_v\|_{L^2(\Omega_j)}\Big).
\end{equation}
Using the definition of numerical fluxes in ~\eqref{eq:generalfl}, the relation ~\eqref{eq:error_relation}, and ~\eqref{eq:auxiliary4}, we arrive at
\begin{equation}\label{eq:bound_et}
\begin{aligned}
\|(\underline{e}_u)_t\|_{L^2(\Omega_j)} &
\leq C\Big(\|\underline{e}_v\|_{L^2(\Omega_j)} + \|\nabla \underline{e}_u\|_{L^2(\Omega_j)} \\
& + h^{s+1}|v(\cdot, t)|_{H^{s+1}(\Omega_j)} + h^{q}|u(\cdot, t)|_{H^{s+1}(\Omega_j)} + h^{q+1}\big|\frac{\partial u(\cdot, t)}{\partial t}\big|_{H^{q+1}(\Omega_j)}\Big).
\end{aligned}
\end{equation}

With the upper bound of $\|(\underline{e}_u)_t\|_{L^2(\Omega_j)}$ derived in ~\eqref{eq:bound_et}, we are now ready to estimate the second nonlinear volume integral on the right-hand side of ~\eqref{volume_int}. We use the H\"{o}lder inequality to obtain
\begin{equation}\label{eq:auxiliary8}
\begin{aligned}
&\sum_{j,{\bf k}} 4\omega_{j,{\bf k}}\beta({\bf x}_{j,{\bf k}}) \left(\frac{df(w)}{dw}\Big|_{w = |\underline{u}^h({\bf x}_{j,{\bf k}})|^2 + \theta_1|\underline{e}_u({\bf x}_{j,{\bf k}})|^2}\mbox{Re}\big(\underline{u}^h({\bf x}_{j,{\bf k}})\underline{e}_{\bar{u}}({\bf x}_{j,{\bf k}})\big)\right)\mbox{Re}\big(\underline{e}_{\bar{u}}({\bf x}_{j,{\bf k}}) (\underline{e}_u)_t({\bf x}_{j,{\bf k}})\big)\\
\leq &\sum_{j,{\bf k}} \mbox{max}\Big|\beta\frac{df(w)}{dw}\Big|\mbox{max}|\underline{u}^h|\Big(3|\underline{e}_u({\bf x}_{j,{\bf k}})\underline{e}_{\bar{u}}({\bf x}_{j,{\bf k}})(\underline{e}_{\bar{u}})_t({\bf x}_{j,{\bf k}})| + |\underline{e}_u({\bf x}_{j,{\bf k}})\underline{e}_u({\bf x}_{j,{\bf k}})(\underline{e}_{\bar{u}})_t({\bf x}_{j,{\bf k}})|\Big)\\
\leq &C \mathcal{E},
\end{aligned}
\end{equation}
where we used the the following inequality to derive the second inequality when $\underline{e}_u$ is small,
\begin{equation}\label{eq:auxillary6}
\int_{\Omega_j} |\underline{e}_u|^4\ d{\bf x} \leq C \int_{\Omega_j} |\underline{e}_u|^2\ d{\bf x}.
\end{equation}
 By combining ~\eqref{eq:err_simplify5}, ~\eqref{eq:auxiliary2}--\eqref{eq:auxillary7} and ~\eqref{eq:auxiliary8}, we have
\begin{equation}\label{eq:err_simplify6}
\begin{aligned}
\frac{d\mathcal{E}}{dt} \leq & C\Big(1+h^{q+1}|u(\cdot,t)|_{H^{q+1}(\Omega)} \Big)\mathcal{E}  \\
& + Ch^{\min\{q+1, s+1\}}\sqrt{\mathcal{E}}\left(\Big|\frac{\partial u(\cdot, t)}{\partial t}\Big|_{H^{q+1}(\Omega)} + |v(\cdot, t)|_{H^{s+1}(\Omega)} + |u(\cdot, t)|_{H^{q+1}(\Omega)}\right)\\
& + \sum_j\int_{F_j} -2 \mbox{Re} \big(\gamma\big|[[\underline{e}_v]]\big|^2 + \tau \big|[[\nabla \underline{e}_{u}]]\big|^2\big) + 2 \mbox{Re} \big([[\nabla \underline{e}_{\bar{u}}]] \delta_v^\ast + (\nabla \delta_{u})^\ast \cdot [[\underline{e}_{\bar{v}}]]\big) \ dS.
\end{aligned}
\end{equation}

To complete the estimates, we apply the method in \cite{appelo2015new} to evaluate the boundary integrals. We divide our arguments into the following cases.

\textbf{Case I}: $\gamma=0$ or $\tau=0$. In this case, we have
\begin{equation}\label{eq:bdry1}
\begin{aligned}
 & \sum_j\int_{F_j} -2 \mbox{Re} \big(\gamma \big|[[\underline{e}_v]]\big|^2 + \tau \big|[[\nabla \underline{e}_{u}]]\big|^2\big) + 2 \mbox{Re} \big([[\nabla \underline{e}_{\bar{u}}]] \delta_v^\ast + (\nabla \delta_{u})^\ast \cdot [[\underline{e}_{\bar{v}}]]\big) \ dS\\
 \leq & \sum_j\int_{F_j} 2 \mbox{Re} \big([[\nabla \underline{e}_{\bar{u}}]] \delta_v^\ast + (\nabla \delta_{u})^\ast \cdot [[\underline{e}_{\bar{v}}]]\big) \ dS 
 \leq  Ch^{\min\{s, q-1\}} \sqrt{\mathcal{E}}\left(|v(\cdot, t)|_{H^{s+1}(\Omega)} + |u(\cdot, t)|_{H^{q+1}(\Omega)}\right).
\end{aligned}
\end{equation}
Combining ~\eqref{eq:err_simplify6} with ~\eqref{eq:bdry1}, we get
\begin{equation*}
\begin{aligned}
\frac{d\mathcal{E}}{dt} \leq & C\Big(1+h^{q+1}|u(\cdot,t)|_{H^{q+1}(\Omega)} \Big)\mathcal{E} \\
 &+ Ch^{\min\{q-1, s\}}\sqrt{\mathcal{E}}\left(\Big|\frac{\partial u(\cdot, t)}{\partial t}\Big|_{H^{q+1}(\Omega)} + |v(\cdot, t)|_{H^{s+1}(\Omega)} + |u(\cdot, t)|_{H^{q+1}(\Omega)}\right).
\end{aligned}
\end{equation*}
Solving the above ordinary differential equation yields
\begin{equation}\label{eq:est1}
\mathcal{E}(T) \leq C_0e^{C_1t}h^{2\min\{q-1,s\}}.
\end{equation}

\textbf{Case II}: $\gamma, \tau > 0$. In this case, we have
\begin{equation}\label{eq:bdry2}
\begin{aligned}
 & \sum_j\int_{F_j} -2 \mbox{Re} \big(\gamma \big|[[\underline{e}_v]]\big|^2 + \tau \big|[[\nabla \underline{e}_{u}]]\big|^2\big) + 2 \mbox{Re} \big([[\nabla \underline{e}_{\bar{u}}]] \delta_v^\ast + (\nabla \delta_{u})^\ast \cdot [[\underline{e}_{\bar{v}}]]\big) \ dS\\
 \leq & \sum_j\int_{F_j} -2 \mbox{Re} \big(\gamma \big|[[\underline{e}_v]]\big|^2 + \tau \big|[[\nabla \underline{e}_{u}]]\big|^2\big)\ dS + \sum_j \int_{F_j} 2\mbox{Re}\left( \frac{\tau}{2}\big|[[\nabla \underline{e}_{u}]]\big|^2 + \frac{1}{2\tau}\big|\delta_v^\ast\big|^2 \right) \ dS\\
 &+ \sum_j\int_{F_j}2\mbox{Re}\left(\frac{\gamma}{2}\big|[[\underline{e}_{v}]]\big|^2 + \frac{1}{2\gamma}\big|(\nabla \delta_u)^\ast\big|^2\right) \ dS \\
 \leq & \sum_j\int_{F_j} \frac{1}{\tau}\mbox{Re}\big(\big|\delta_v^\ast\big|^2\big) + \frac{1}{\gamma} \mbox{Re}\big(\big|(\nabla\delta_u)^\ast\big|^2\big)\ dS \\
 \leq  &Ch^{\min\{2s+1, 2q-1\}}\left(|v(\cdot, t)|_{H^{s+1}(\Omega)}^2 + |u(\cdot, t)|_{H^{q+1}(\Omega)}^2\right).
\end{aligned}
\end{equation}
Substituting ~\eqref{eq:bdry2} into ~\eqref{eq:err_simplify6}, we get
\begin{equation*}
\begin{aligned}
\frac{d\mathcal{E}}{dt} \leq & C\Big(1+h^{q+1}|u(\cdot,t)|_{H^{q+1}(\Omega)} \Big)\mathcal{E} + Ch^{\min\{2s+1, 2q-1\}}\left(|v(\cdot, t)|_{H^{s+1}(\Omega)}^2 + |u(\cdot, t)|_{H^{q+1}(\Omega)}^2\right)\\
&+ Ch^{\min\{q+1, s+1\}}\sqrt{\mathcal{E}}\left(\Big|\frac{\partial u(\cdot, t)}{\partial t}\Big|_{H^{q+1}(\Omega)} + |v(\cdot, t)|_{H^{s+1}(\Omega)} + |u(\cdot, t)|_{H^{q+1}(\Omega)}\right),
\end{aligned}
\end{equation*}
which gives
\begin{equation}\label{eq:est2}
\mathcal{E}(T) \leq C_0e^{C_1t}h^{2\min\{q-\frac{1}{2},s + \frac{1}{2}\}}.
\end{equation}
Since $\underline{e}_u = e_u + \delta_u, \underline{e}_v = e_v + \delta_v$ as given in ~\eqref{eq:error_relation}, \eqref{eq:thm2} follows from the triangle inequality, ~\eqref{eq:est1} and ~\eqref{eq:est2}. This completes the proof.
\end{proof}

\begin{remark}\label{remark1}
If $\beta(\mathbf{x})f(|u|^2) < 0$ for some $u$, a new variable $u = e^{-i\frac{\alpha}{2}t}\eta$ can be introduced to transform the original problem. This results in the modified problem
\[\eta_{tt} - \Delta \eta + \left(\frac{\alpha^2}{4} + \beta({\bf x})f(|\eta|^2)\right)\eta = 0.\]
By introducing another new variable $\eta = e^{\theta t} w, \theta > 0$, and using the EDG scheme to solve for $w$, we can obtain error estimates provided that $\theta^2 + \frac{\alpha^2}{4} + \beta(\mathbf{x})f (|\eta|^2)$ is positive.
\end{remark}

\begin{remark}
Similar to the discussion in \cite{appelo2015new}, it is possible to obtain an improved error estimate of $h^{q}$ when $q = s + 1$ in both one-dimensional and multidimensional problems with Cartesian grids by constructing $(\underline{u}^h, \underline{v}^h)$ such that the boundary terms in ~\eqref{eq:err_simplify4} vanish, provided that the flux parameters in ~\eqref{eq:generalfl} satisfy $\mu(1 - \mu) = \gamma\tau$.
\end{remark}

\begin{remark}
The analysis presented in Section \ref{sec_formula} and Section \ref{sec:error} focuses only on the case where periodic boundary conditions are used. However, for problems with physical boundary conditions that satisfy the condition
\[a\frac{\partial u({\bf x}, t)}{\partial t} + b\nabla u({\bf x}, t)\cdot{\bf n} = 0, \quad {\bf x}\in \partial\Omega,\]
where $a^2 + b^2 = 1$ and $a\geq 0, b\geq 0$, the technique used in \cite{appelo2015new} can be used to specify the numerical fluxes at physical boundaries while preserving the energy and error estimates.
\end{remark}

\begin{remark}
It is worth mentioning that the error estimation given in the previous sections seems to be too conservative for the problems studied in the numerical experiments section. In that section, we observe that the error growth is no worse than linear with respect to time.
\end{remark}

%%%%%%%%%%%%%%%%%%%%%%%%%%%%%%%%%%%%%%%%%%%%%%%%%%%%%%%%%%%%%%%%%%
%%%%%%%%%%%%%%%%%%%%%%%%%%%%%%%%%%%%%%%%%%%%%%%%%%%%%%%%%%%%%%%%%%
\section{Numerical simulations}
\label{sec:numerical}
%%%%%%%%%%%%%%%%%%%%%%%%%%%%%%%%%%%%%%%%%%%%%%%%%%%%%%%%%%%%%%%%%%
%%%%%%%%%%%%%%%%%%%%%%%%%%%%%%%%%%%%%%%%%%%%%%%%%%%%%%%%%%%%%%%%%%

In this section, we perform numerical experiments to demonstrate and verify the convergence of the EDG scheme proposed in Section \ref{sec_formula}. The experiments are performed in both one and two dimensions, and we use a standard modal basis formulation. For two-dimensional problems, a square domain is used, and tensor-product Legendre polynomials are used as basis functions on a square reference element. To evaluate the nonlinear integral in the scheme ~\eqref{eq:21}--\eqref{eq:22}, tensor-product Gauss rules with $q+1$ nodes in each coordinate on the reference element are used. In all experiments, we use the fourth-order Runge-Kutta scheme for time stepping and set the flux splitting parameter to $\xi=1$ when using the Sommerfeld fluxes ~\eqref{flux:up}. To achieve the desired convergence rate for the spatial discretization in all experiments, we choose a sufficiently small time step size such that $\Delta t = \frac{0.00975}{\pi} h$
to ensure that the temporal error is dominated by the spatial error. 

%%%%%%%%%%%%%%%%%%%%%%%%%%%%%%%%%%%%%%%%%%%%%%%%%%%%%%%%%%%%%%%%%%
\subsection{One-dimensional case}
\label{liner_sec}
%%%%%%%%%%%%%%%%%%%%%%%%%%%%%%%%%%%%%%%%%%%%%%%%%%%%%%%%%%%%%%%%%%

We start with some numerical examples in the one-dimensional case for different versions of model~\eqref{EQ:NLSW}.

%%%%%%%%%%%%%%%%%%%%%%%%%%%%%%%%%%%
\subsubsection{Example I}
%%%%%%%%%%%%%%%%%%%%%%%%%%%%%%%%%%%

We first consider model~\eqref{EQ:NLSW} with $\alpha = \beta = 1$ and $f(|u|^2) = 1$, that is,
\begin{align}\label{eq:51}
\begin{split}
    &u_{tt} - u_{xx} + i u_t + u =0, \quad x \in (0, 2\pi),\\
    & u(x,0) = e^{ix}, \quad u_t (x,0) = i e^{ix}\,.
\end{split}
\end{align}
with the periodic boundary $u(0,t)=u(2\pi,t)$. This is a linear problem and it has the following exact solution
\begin{equation}\label{sol:eq51}
    u(x,t) = e^{i(x+t)}.
\end{equation}

To discretize the spatial interval, we use uniform spacing with vertices $x_j = jh$ for $j = 0,\cdots, N$ and $h = 2\pi/N$. In our studies, we present the results by considering the degree of the approximation space of $u^h$ and $v^h$ as $q$ and $s$, respectively. Specifically, we consider two cases: $u^h$ and $v^h$ are in the same approximation space with $s = q = (1,2,3,4)$; and $u^h$ and $v^h$ are in the different approximation space with $s = q - 1 = (0,1,2,3)$. Furthermore, we test three different fluxes: the central flux ~\eqref{flux:central}, which we denote as C.-flux, the alternating flux ~\eqref{flux:a1} with $\mu=0$, which we denote as A.-flux, and the Sommerfeld flux ~\eqref{flux:up}, which we denote as S.-flux. It is important to note that both the C.-flux and the A.-flux are energy-conserving methods, while the S.-flux is an energy-dissipating method. We want to emphasize that the performance of $\mu=1$ is similar to $\mu=0$ in the case of the A.-flux; therefore, we only report the results for $\mu=0$ in the remaining sections of the paper.

\begin{figure}[!htb]
\centering
{\includegraphics[width=0.32\textwidth]{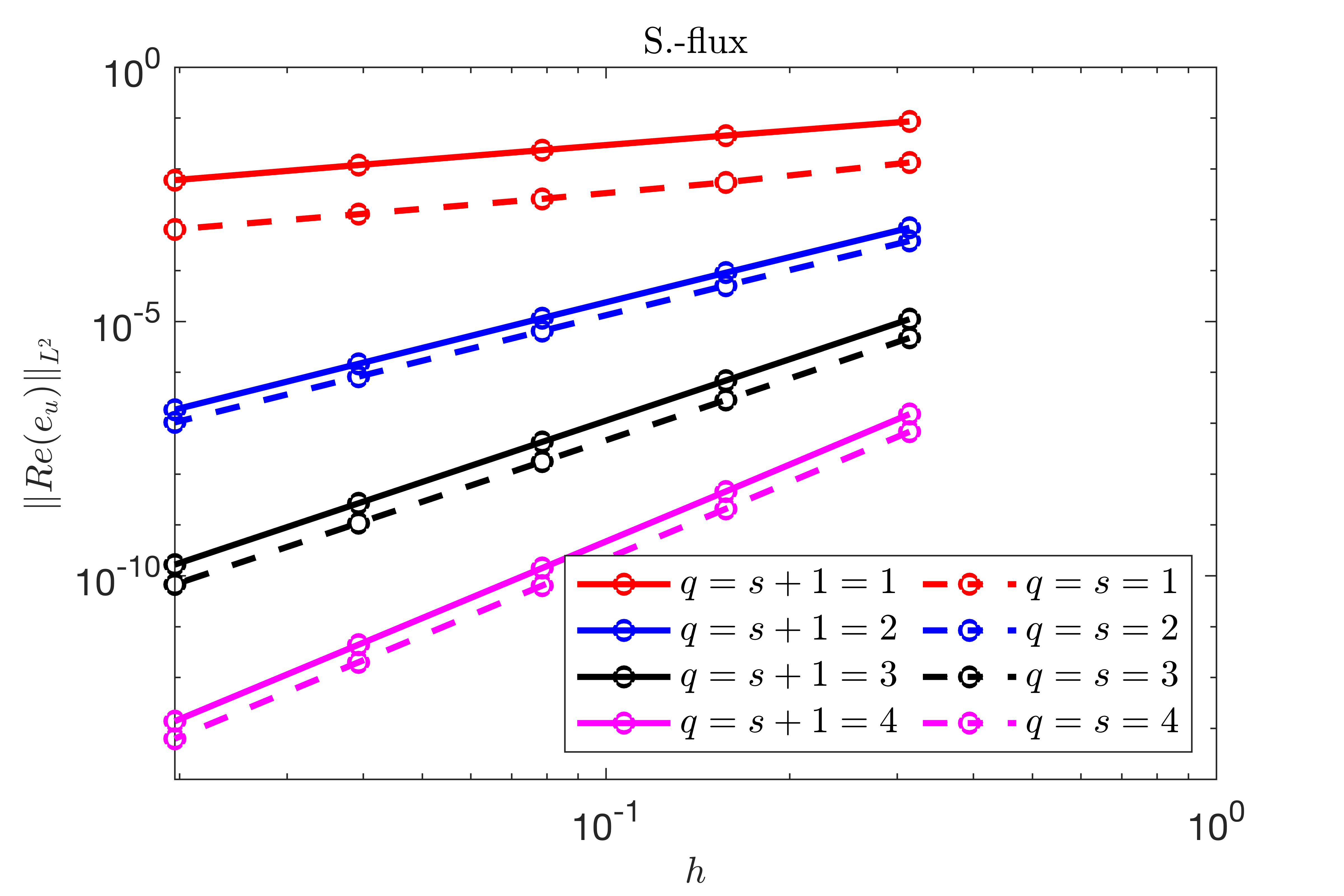}}
{\includegraphics[width=0.32\textwidth]{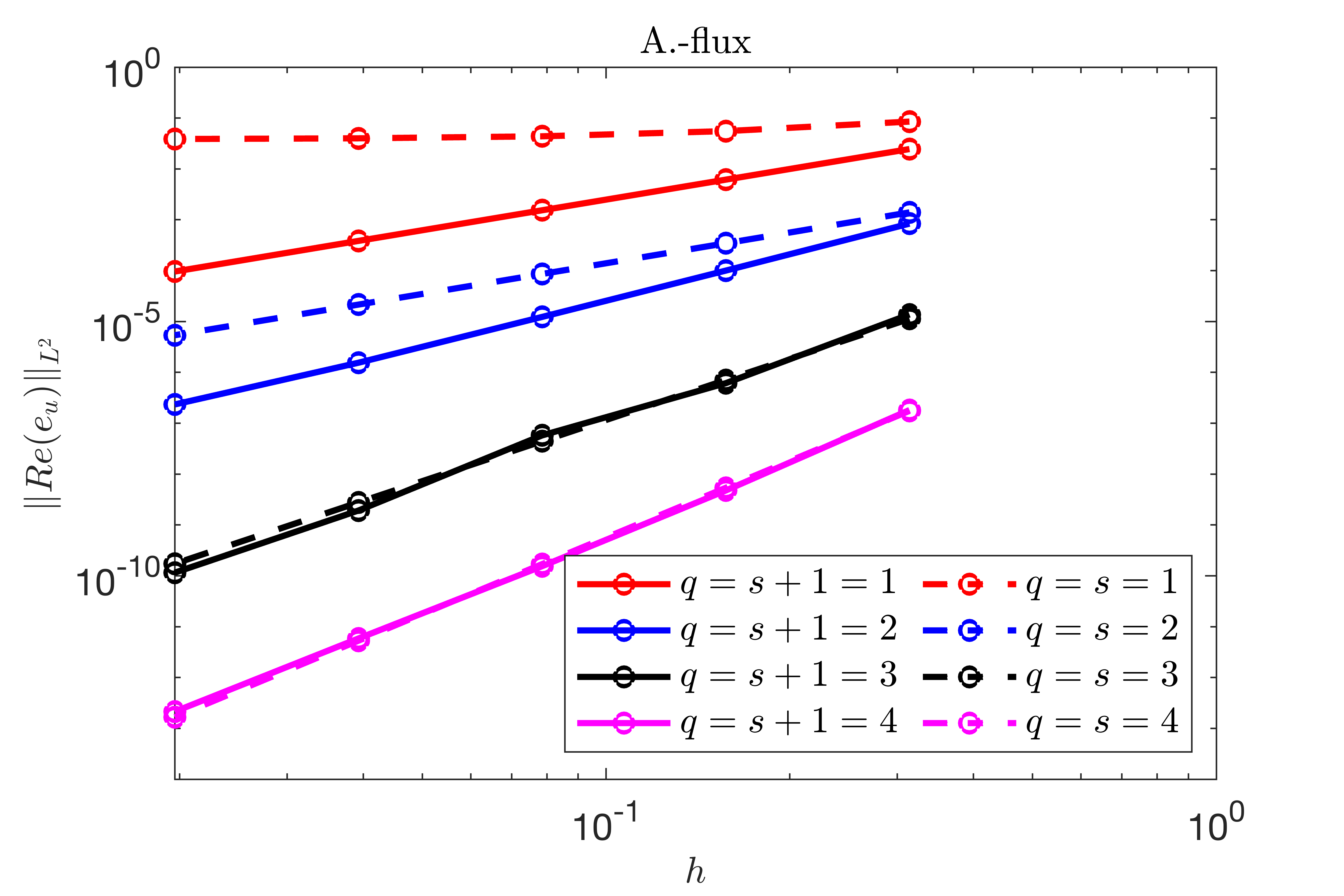}} 
{\includegraphics[width=0.32\textwidth]{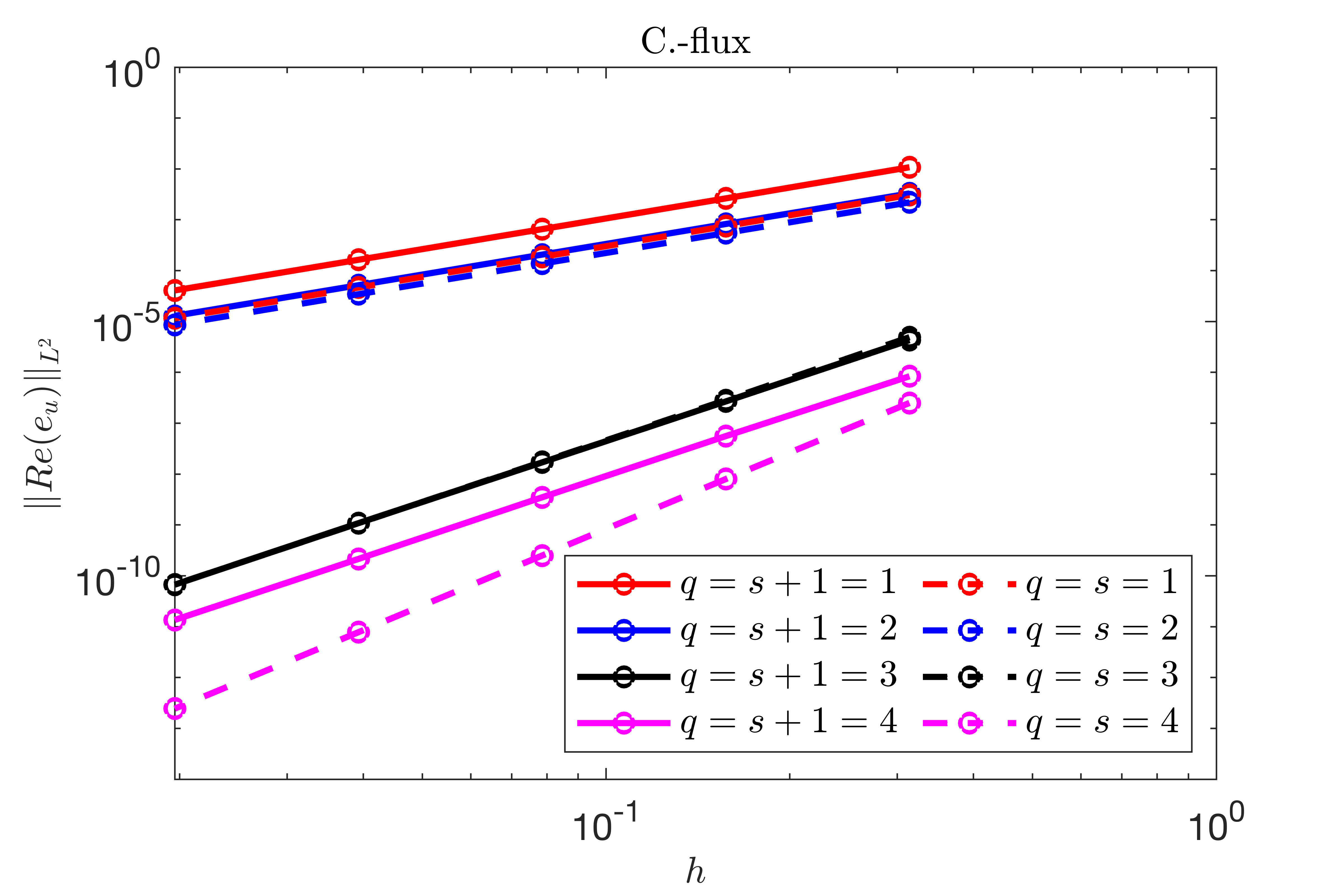}} \\
{\includegraphics[width=0.32\textwidth]{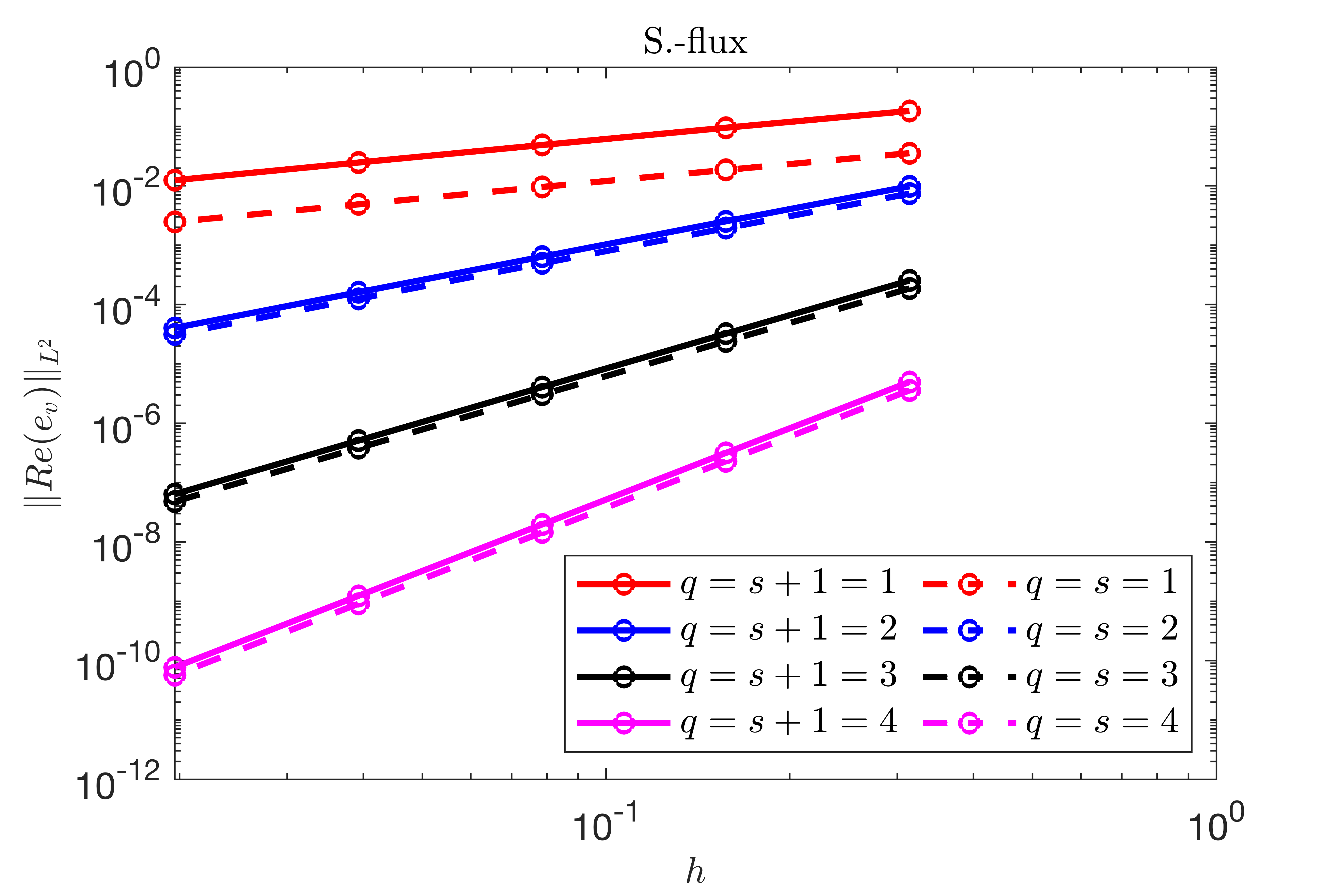}} 
{\includegraphics[width=0.32\textwidth]{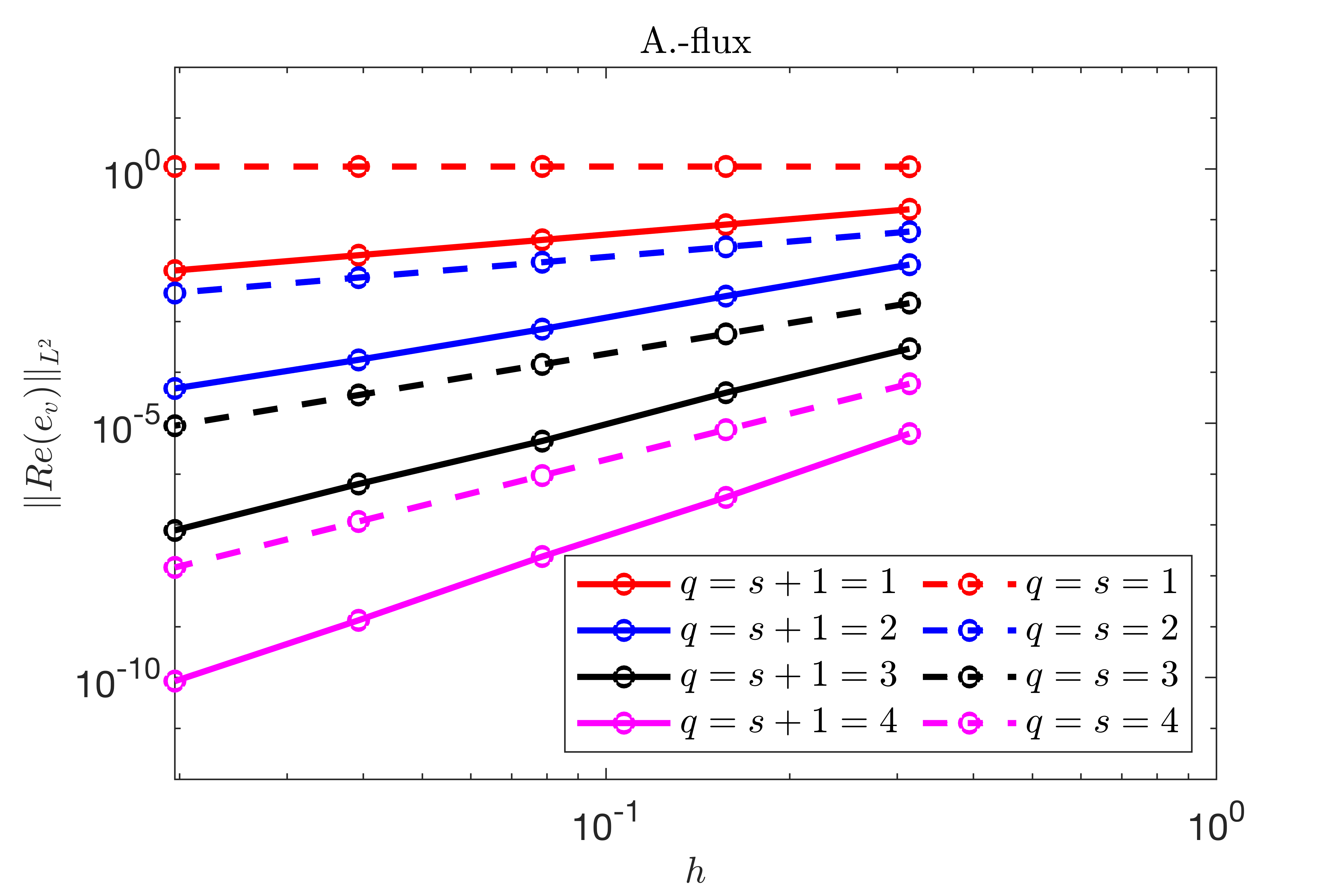}}
{\includegraphics[width=0.32\textwidth]{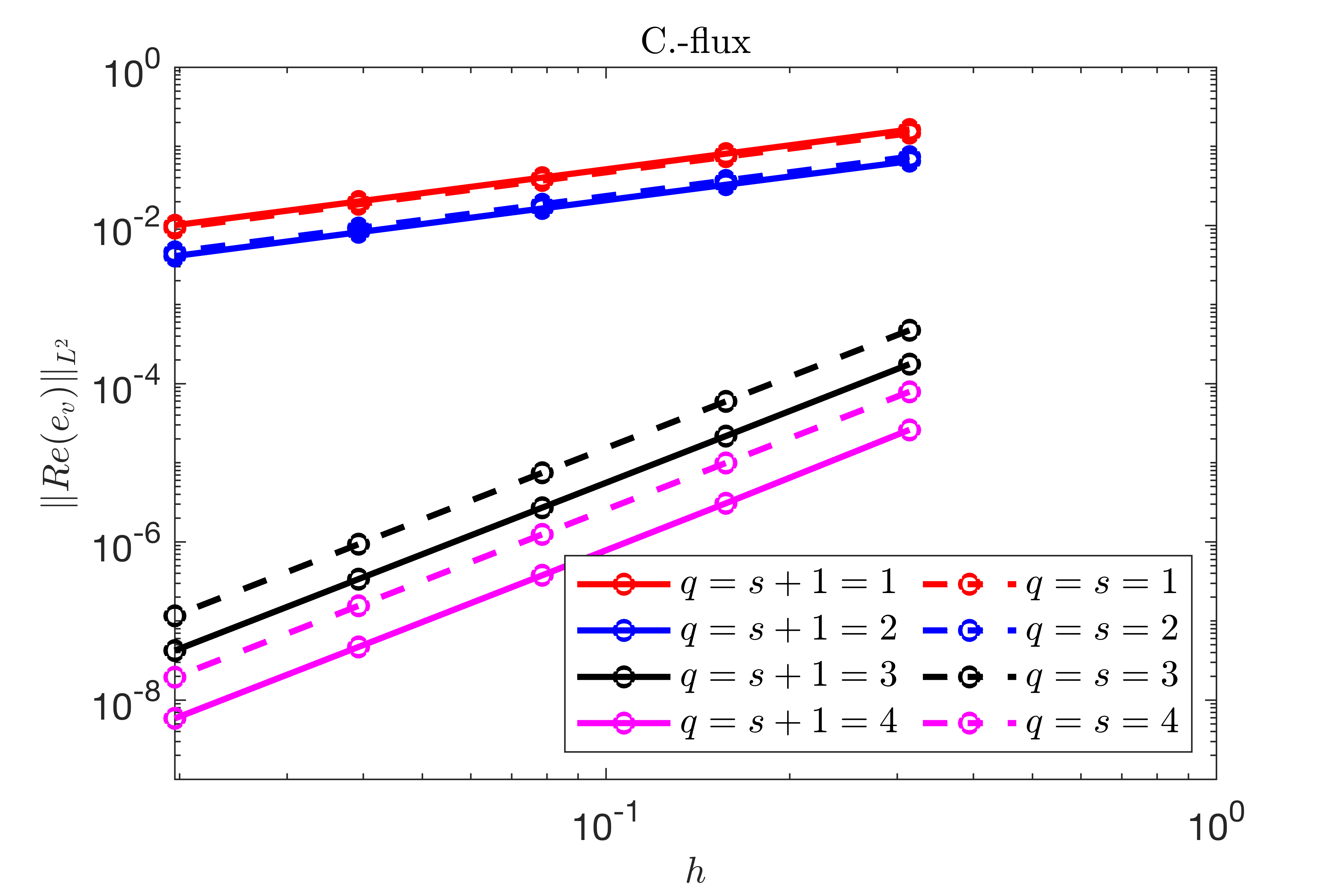}}
    \caption{\scriptsize{$L^2$ errors for the real parts of $u$ (top) and $v$ (bottom) for the problem ~\eqref{eq:51}. The solid lines show the results of $u_h$ and $v_h$ in the different approximation space with $q = s+1$, while the dashed lines show the results of $u_h$ and $v_h$ in the same approximation space with $s = q$.}}
    \label{fig:example1}
\end{figure}

\begin{table}[!htb]
 	\footnotesize
 	\begin{center}
 		\scalebox{1.0}{
 			\begin{tabular}{c c c c c c}
 				\hline
 				~ & ~ & ~ &  convergence rates of $u/v$ & ~ &~\\
 				\hline
 				flux/$(q,s)$ & ~ & $q=1, s=0$ & $q=2, s=1$ & $q=3, s=2$ & $q=4, s=3$ \\
 				\hline
 				S.-flux& ~ & 0.9588/0.9705 & 2.9708/1.9826 & 4.0075/2.9906 & \quad \quad 5.0128/3.9938 \\
 				% ~ & ~ & ~ & ~ & ~ & ~\\
                    A.-flux& ~ & 2.0004/1.0003 & 2.9638/2.0341 & 4.2050/2.9702 & \quad \quad 4.9061/4.0386 \\
 				% ~ & ~ & ~ & ~ & ~ & ~\\
 				C.-flux& ~ & 2.0101/1.0043 & 1.9991/0.9966 & 3.9890/3.0059 & \quad \quad 3.9853/3.0273 \\ 
 				\hline
     \hline
 				flux/$(q,s)$ & ~ & $q=s=1$ & $q=s=2$ & $q=s=3$ & $q=s=4$ \\
 				\hline
 				S.-flux& ~ & 1.0810/0.9625 & 2.9651/1.9716 & 4.0242/2.9829 & \quad \quad 5.0114/3.9873 \\
 				% ~ & ~ & ~ & ~ & ~ & ~\\
                    A.-flux& ~ & 0.2746/-0.0010 & 2.0053/1.0008 & 3.9977/2.0001 & \quad \quad 4.9997/2.9999 \\
 				% ~ & ~ & ~ & ~ & ~ & ~\\
 				C.-flux& ~ & 2.0104/0.9968 & 2.0000/1.0000 & 4.0359/2.9974 & \quad \quad 4.9942/2.9977 \\
 				\hline
 			\end{tabular}
 		}
 	\end{center}
    \vspace{-0.4cm}
 	\caption{\scriptsize{Linear least squares estimates of the rates of convergence from the curves in Figure \ref{fig:example1} for the real parts of $u$ and $v$ of the problem ~\eqref{eq:51}.}}\label{table_sq_diff_example1}
 \end{table}

The $L^2$ errors for the real parts of $u$ and $v$ at the final time $T = 1$ are shown in Figure \ref{fig:example1}, and the corresponding linear least squares estimates of the rates of convergence from the error curves can be found in Table \ref{table_sq_diff_example1}. Due to space limitations, we have omitted the results for the imaginary parts of $u,v$, since the performance is similar to the real parts in Figure \ref{fig:example1}. A clear conclusion is that the choice of $s = q - 1$ is more reliable for all except the C.-flux. And the details are as follows: 

\textbf{S.-flux:} For $u$, the $L^2$ error converges at the optimal rate, $q + 1$, when $q\geq 2$ for both cases $s = q$ and $s = q-1$; when $q = 1$, first-order convergence is observed for both $s = 0$ and $s = 1$. For $v$, the $L^2$ error converges at the optimal rate, $s + 1$, when $s\geq 0$ for the case with $s = q-1$, while it converges only at the suboptimal rate $s$ when $s\geq 1$ for the case with $s = q$.

\textbf{A.-flux:} For $u$, the $L^2$ error converges at the optimal rate, $q + 1$, when $q\geq 1$ for the case with $s = q-1$; when $s = q$, we observe an optimal convergence rate, $q+1$, when $q \geq 3$, while a second-order convergence when $q = 2$, and a zero-order convergence when $q = 1$. For $v$, the $L^2$ error converges at the optimal rate, $s + 1$, when $s\geq 0$ for the case $s = q-1$, and a reduction of $2$ compared to the optimal convergence, $s+1$, when $s = q$.

\textbf{C.-flux:} For $u (\mbox{resp.} \ v)$ with the case $s = q-1$, the $L^2$ error converges at the optimal rate, $q + 1 (\mbox{resp.}\ s+1)$, when $q$ is odd, while at a suboptimal rate, $q (\mbox{resp.}\ s)$, when $q$ is even. For the case $q = s$, the $L^2$ error of $u$ converges at the optimal rate, $q + 1$, when $q = 1,3,4$, and at a suboptimal rate, $q$, when $q = 2$; while the $L^2$ error of $v$ converges at the suboptimal rate, $s$, when $s$ is odd, and at a $(s-1)$-th convergent rate when $s$ is even. 

 Figure \ref{err_history} shows the time history of the $L^2$ errors in the modulus of $u$ for three different numerical fluxes. We focus on the scenario where $u^h$ and $v^h$ belong to different approximation spaces with $s = q-1 = 2$. Specifically, in Figure \ref{err_history}, we plot the $L^2$ errors in the modulus of $u$ with $(q, N) = (2,80)$ up to the final time $T = 400$ from left to right are for the S.-flux, the C.-flux, and the A.-flux, respectively. It is worth noting that the $L^2$ errors grow at most linearly with time for all three numerical fluxes, although Theorem \ref{truth2} suggests an exponential growth rate with time.

\begin{figure}[!htb]
\centering
{\includegraphics[width=0.32\textwidth]{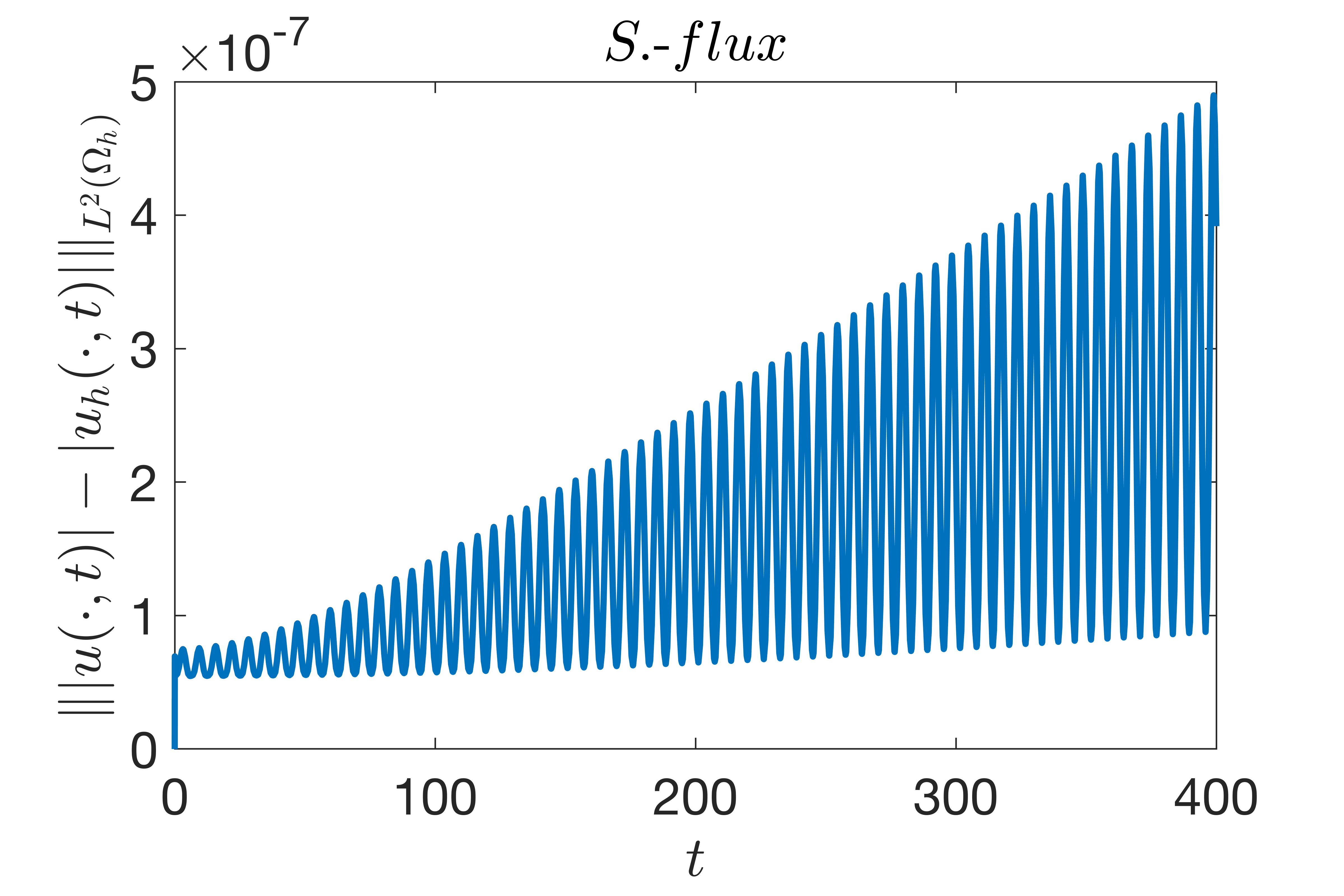}}
{\includegraphics[width=0.32\textwidth]{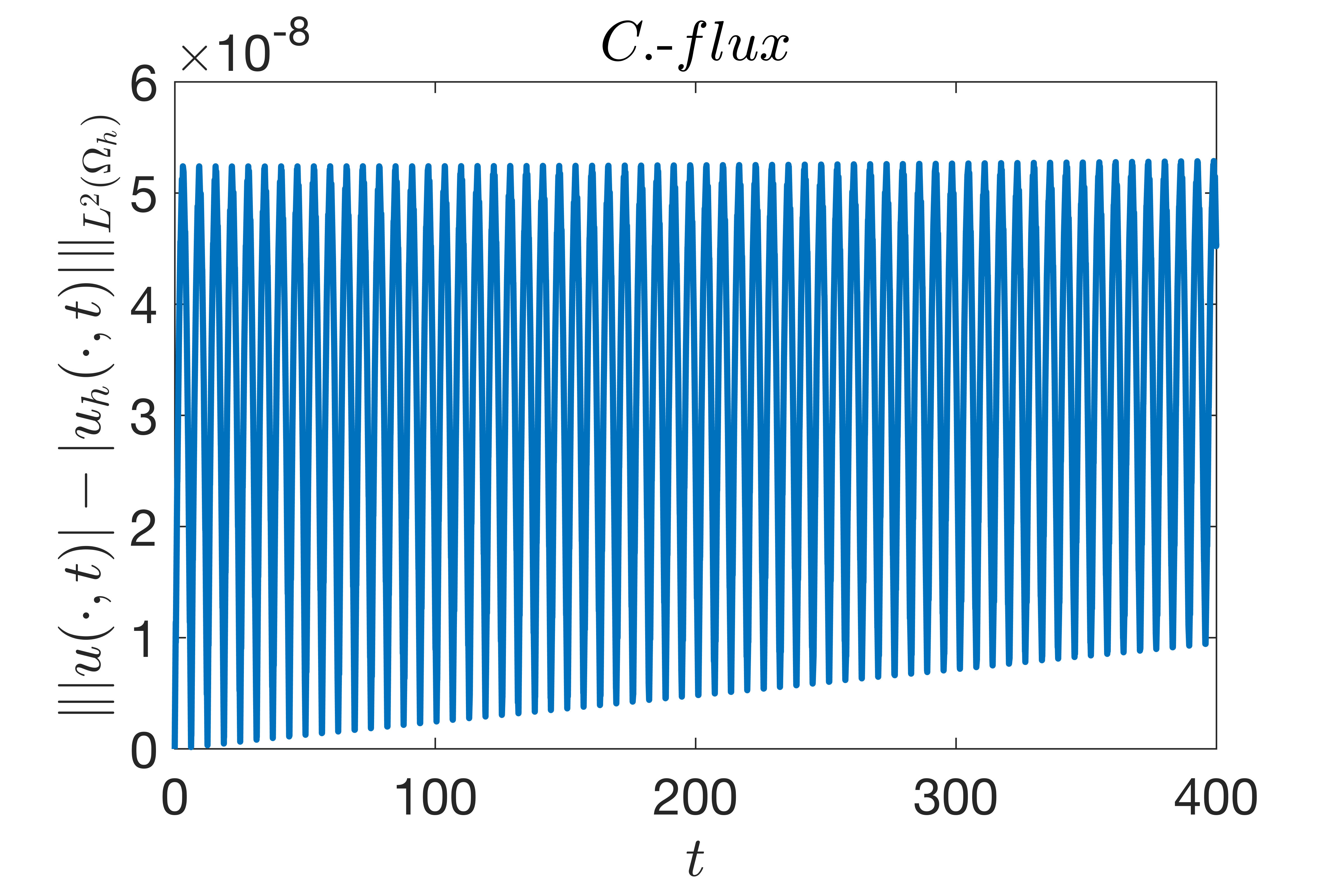}} 
{\includegraphics[width=0.32\textwidth]{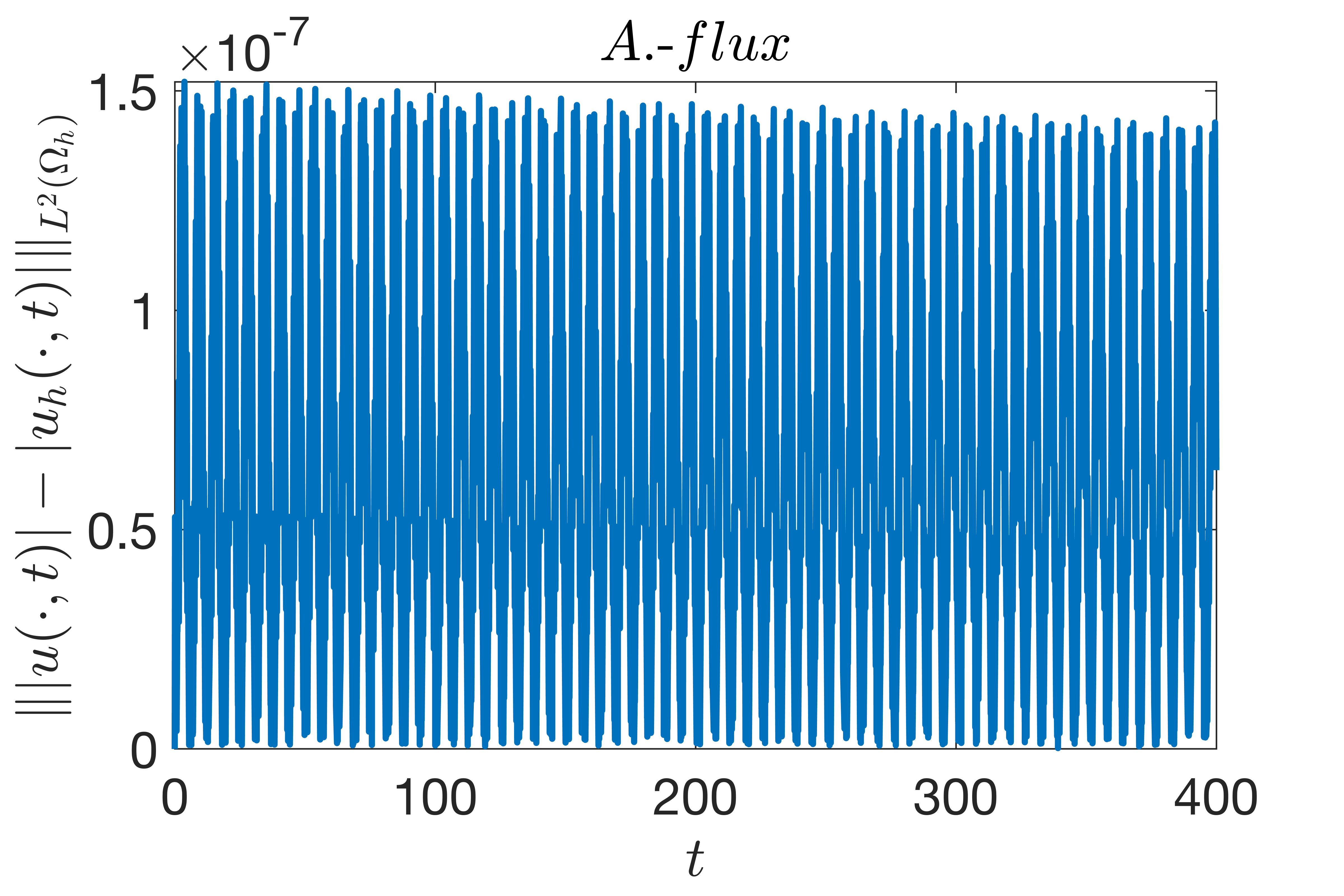}} 
    \caption{\scriptsize{From left to right is the time history of $L^2$ errors in $|u|$ for the problem ~\eqref{eq:51} using the S.-flux, the C.-flux, and the A.-flux, respectively, up to the final time $T = 400$.  In addition, the DG solutions $(u^h, v^h)$ are set to be in different approximation spaces with $s = q-1 = 2$, and the number of elements is chosen to be $N = 80$.}}
    \label{err_history}
\end{figure}

%%%%%%%%%%%%%%%%%%%%%%%%%%%%%%%%%%%
\subsubsection{Example II}
%%%%%%%%%%%%%%%%%%%%%%%%%%%%%%%%%%%

For the second example, we consider the case where $\beta(x) = e^{-x^2}$ is spatially dependent, while we still have $\alpha = 1$ and $f(|u|^2) = 1$ as in the first example, namely,
\begin{align}\label{eq:52}
\begin{split}
    & u_{tt} - u_{xx} + i u_t + e^{-x^2} u = 0, \quad x \in (0, 2\pi)\,,\\
    & u(x,0) = e^{ix}, \quad u_t (x,0) = i e^{ix}\,,
\end{split}
\end{align}
with the periodic boundary condition $u(0,t)=u(2\pi,t)$.

We use the same spatial discretization as in the previous example. Figure \ref{beta_nonlinear} shows time snapshots of $|u|$ at $t = 5\pi^2, 10\pi^2$, and $20\pi^2$. The S.-flux is used with $u^h$ and $v^h$ in the same approximation space with $s = q = 3$ and the uniform mesh with the number of elements $N = 160$. The only difference between problem ~\eqref{eq:51} and problem ~\eqref{eq:52} is the value of $\beta(x)$. In problem ~\eqref{eq:51}, $|u|$ is equal to $1$, as seen in ~\eqref{sol:eq51}, while this is not the case in problem ~\eqref{eq:52}, as shown in Figure \ref{beta_nonlinear}.
\begin{figure}[!htb]
    \centering
%{\includegraphics[width=0.45\textwidth]{ex52/}}
{\includegraphics[width=0.32\textwidth]{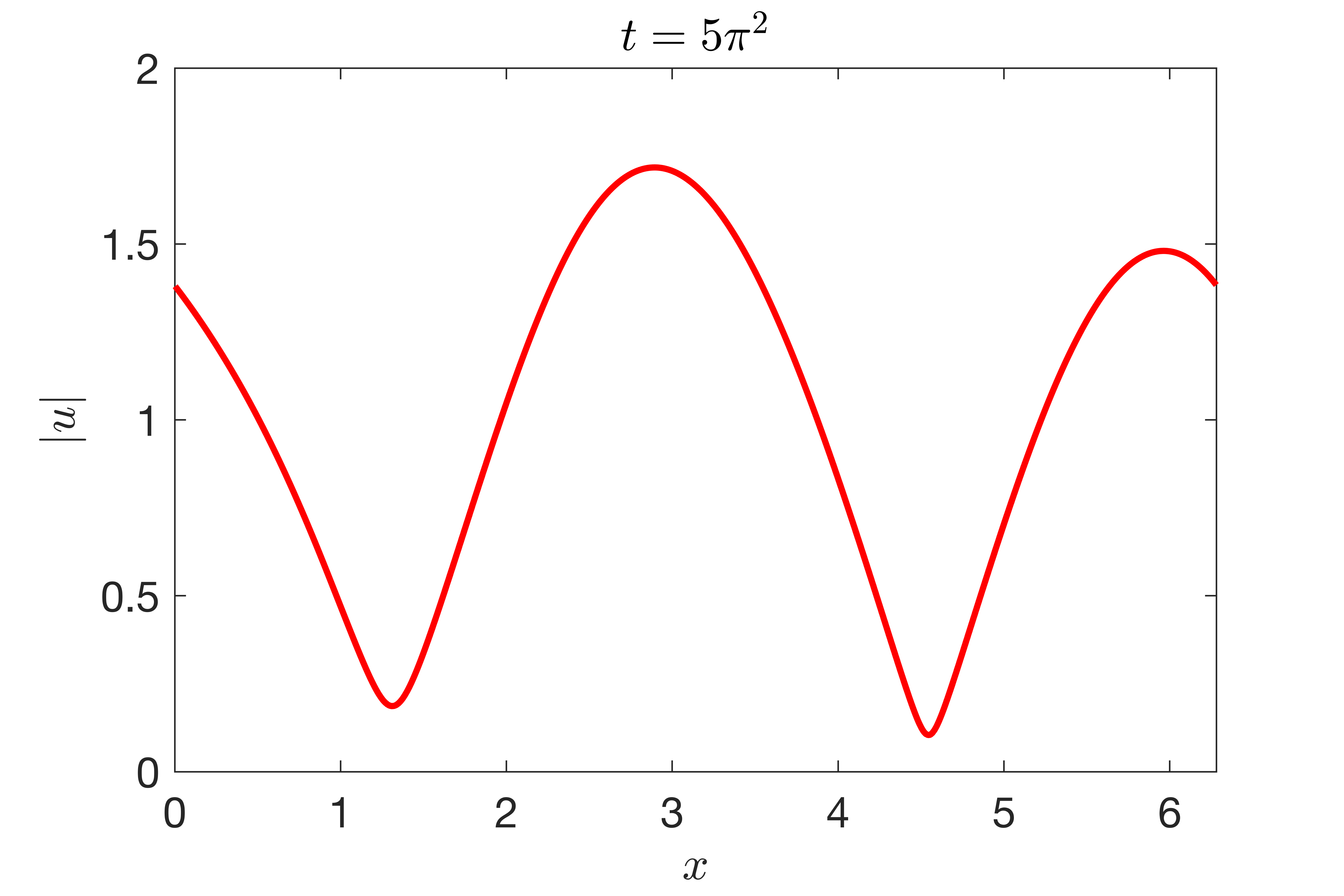}} 
{\includegraphics[width=0.32\textwidth]{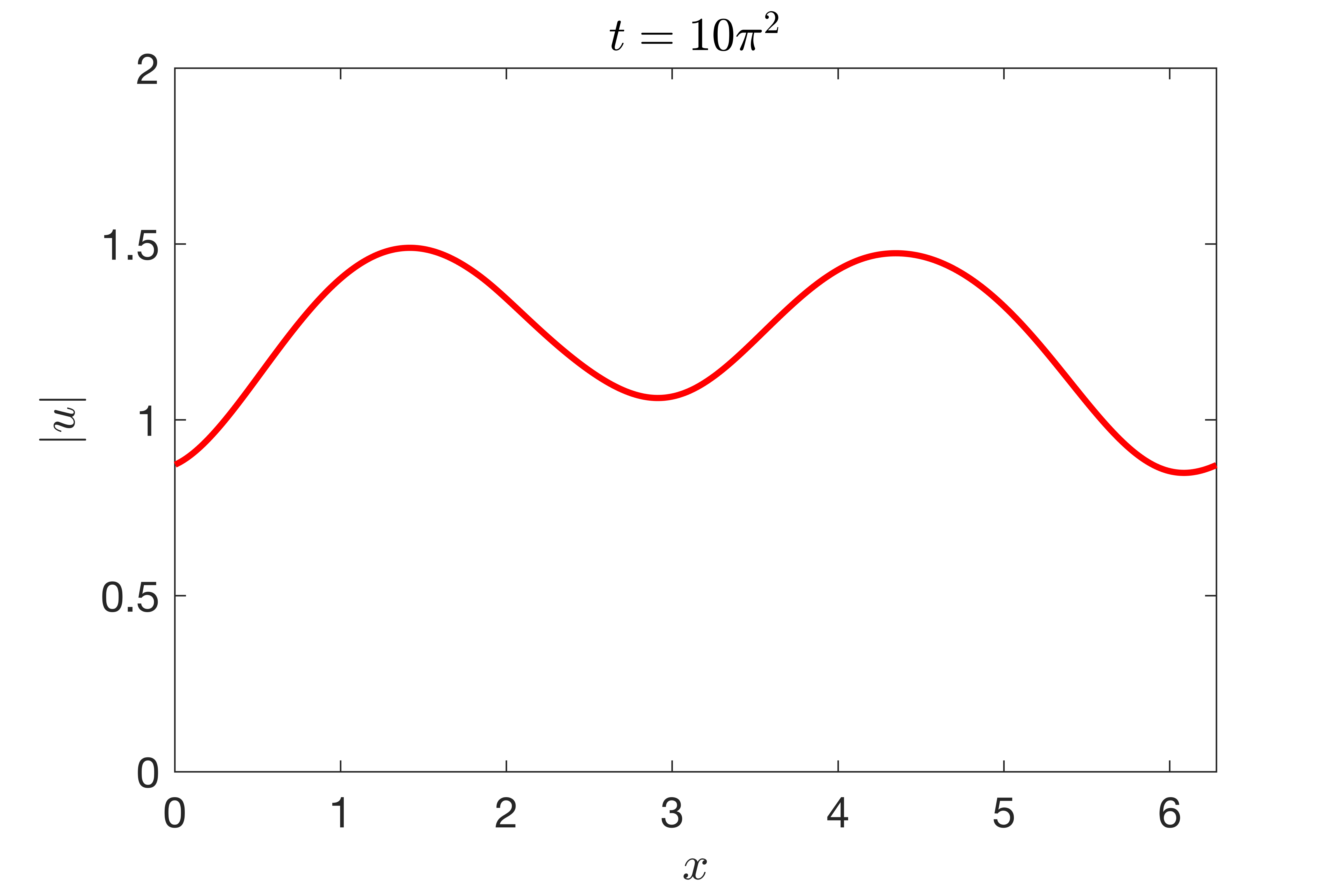}}
{\includegraphics[width=0.32\textwidth]{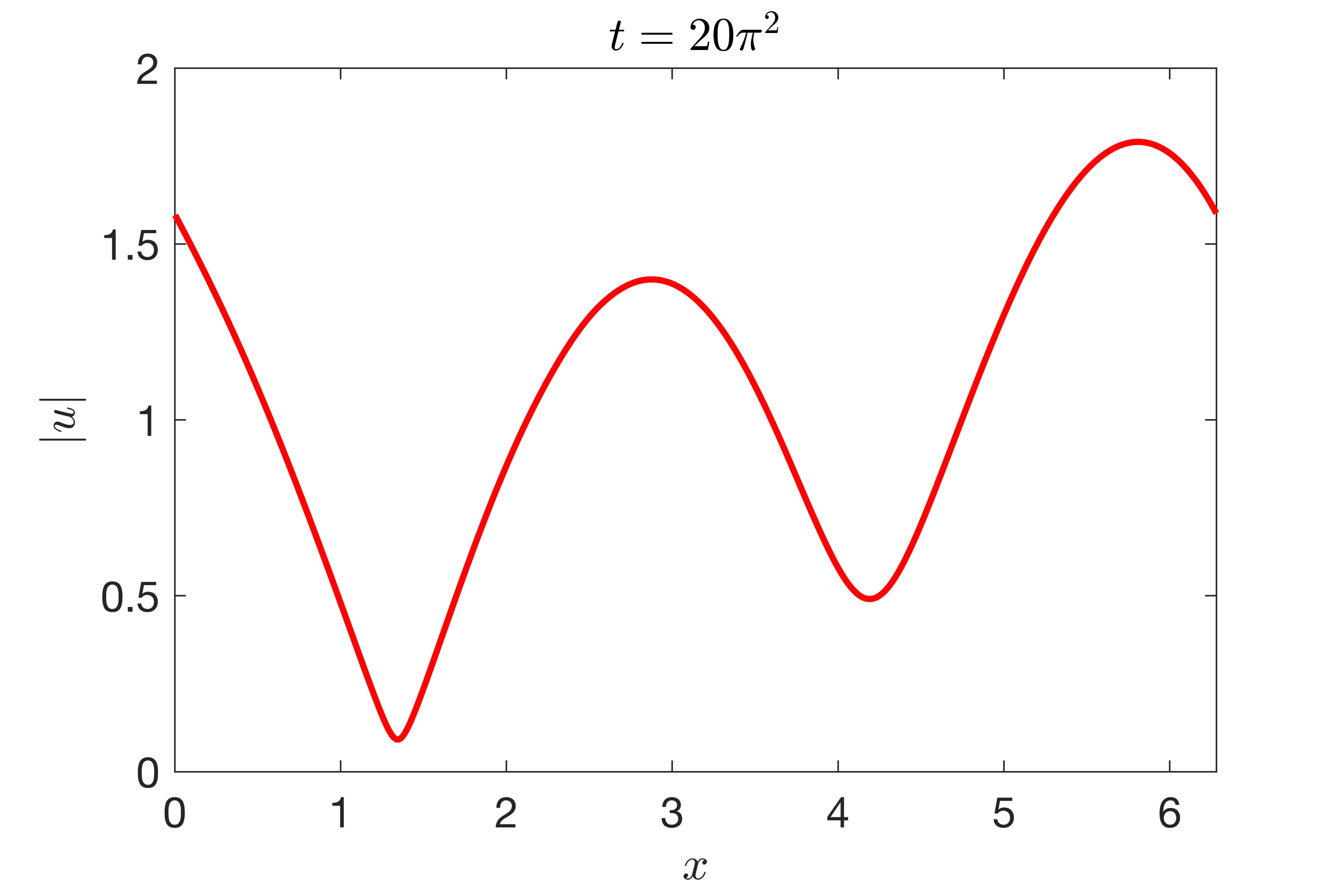}}
    \caption{\scriptsize{The time evolution of $|u|$ for the problem ~\eqref{eq:52} using the S.-flux on a uniform mesh of $N = 160$ at different times $t=5\pi^2, 10\pi^2, 20\pi^2$. In particular, $u^h$ and $v^h$ are in the same approximation space with $s = q = 3$.}}
    \label{beta_nonlinear}
\end{figure}

In addition, we present the discrete numerical energy trajectories $E^h(t) = \sum_j E_j^h(t)$ for the proposed EDG scheme ~\eqref{eq:21}--\eqref{eq:22} applied to the problem ~\eqref{eq:52}. Figure \ref{energy_eg2} shows the results obtained with three different numerical fluxes: the A.-flux, the C.-flux, and the S.-flux. Here $E_j^h(t)$ is defined in ~\eqref{dis_energy}. We also consider two cases: $u^h$ and $v^h$ are in the same approximation space with $s = q = 3$ (bottom), and $u^h$ and $v^h$ are in different approximation spaces with $s = q-1 = 2$ (top). The number of elements is chosen to be $N = 160$, with the final time $T = 100$. Overall, we observe excellent energy conservation for the conservative schemes, i.e. the A.-flux and the C.-flux for both cases with either $s = q-1$ or $s = q$. In particular, the discrete energy is conserved around $8$ digits. However, for the energy dissipating scheme, i.e. the S.-flux, the discrete energy is dissipated as expected, but the total dissipation is small, only about $6$ digits up to $T = 100$, by having a slightly different performance depending on $s$.

\begin{figure}[!htb]
\centering
%{\includegraphics[width=0.4\textwidth]{ex52/}}
%{\includegraphics[width=0.4\textwidth]{ex52/}}\\
%{\includegraphics[width=0.4\textwidth]{ex52/}}
%{\includegraphics[width=0.4\textwidth]{ex52/}}
{\includegraphics[width=0.4\textwidth]{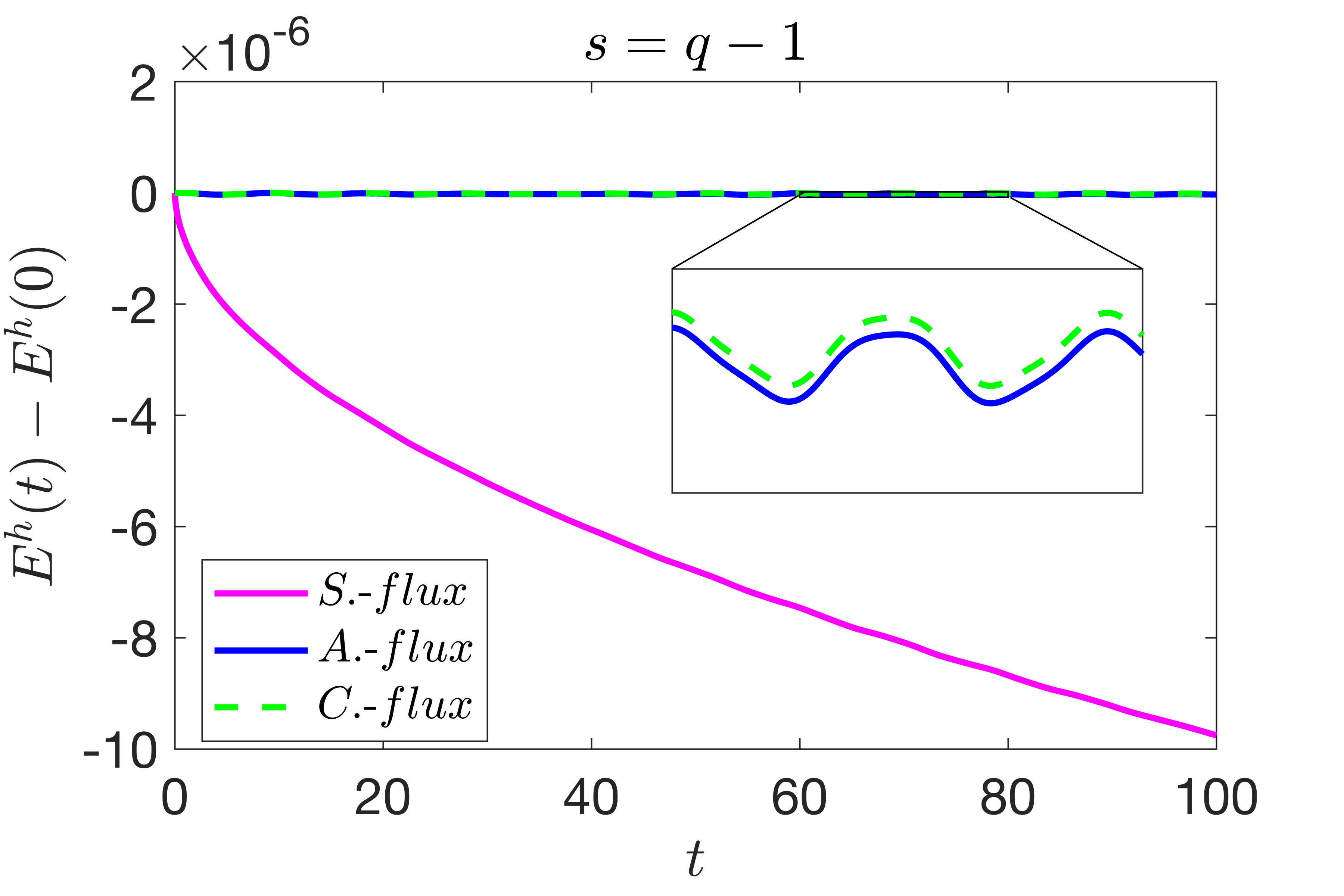}}\quad
{\includegraphics[width=0.4\textwidth]{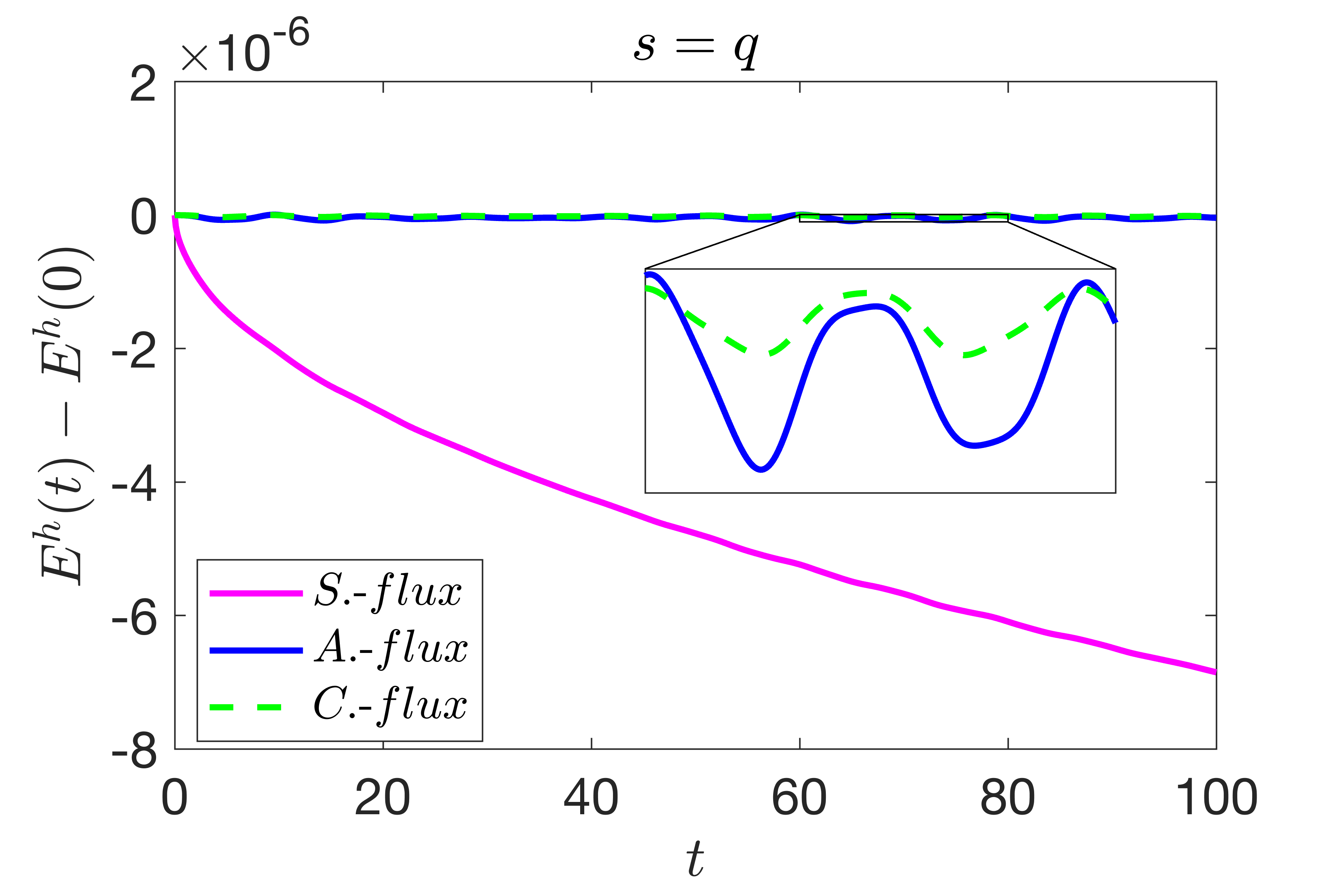}}
	\caption{\scriptsize{The time evolution of the discrete energy difference $E^h(t) - E^h(0)$ for the problem ~\eqref{eq:52}. The left panel is for the case where $u^h$ and $v^h$ are in different approximation spaces with $s=q-1=2$, while the right panel is for the case where $u^h$ and $v^h$ are in the same approximation space with $s=q=3$.
	}}\label{energy_eg2}
\end{figure}

%%%%%%%%%%%%%%%%%%%%%%%%%%%%%%%%%%%
\subsubsection{Example III}
%%%%%%%%%%%%%%%%%%%%%%%%%%%%%%%%%%%

We now start to test the nonlinear models. We start with the case where $   \alpha = \beta = 1$ and $ f(|u|^2) = |u|^2$. In this case, model~\eqref{EQ:NLSW} simplifies into the form
\begin{align}\label{eq:53}
\begin{split}
    &u_{tt} - u_{xx} + i u_t + |u|^2 u = 0, \quad x \in (-40, 40)\,,\\
    & u(x,0) = (1+i) x e^{-10(1-x)^2}, \quad u_t (x,0) = 0.
\end{split}
\end{align}
Unlike the previous two examples, we impose homogeneous Dirichlet boundary conditions $u(-40, t) = u(40, t) = 0$ instead of periodic boundary conditions. 

\begin{figure}[!htb]
\centering
{\includegraphics[width=0.32\textwidth]{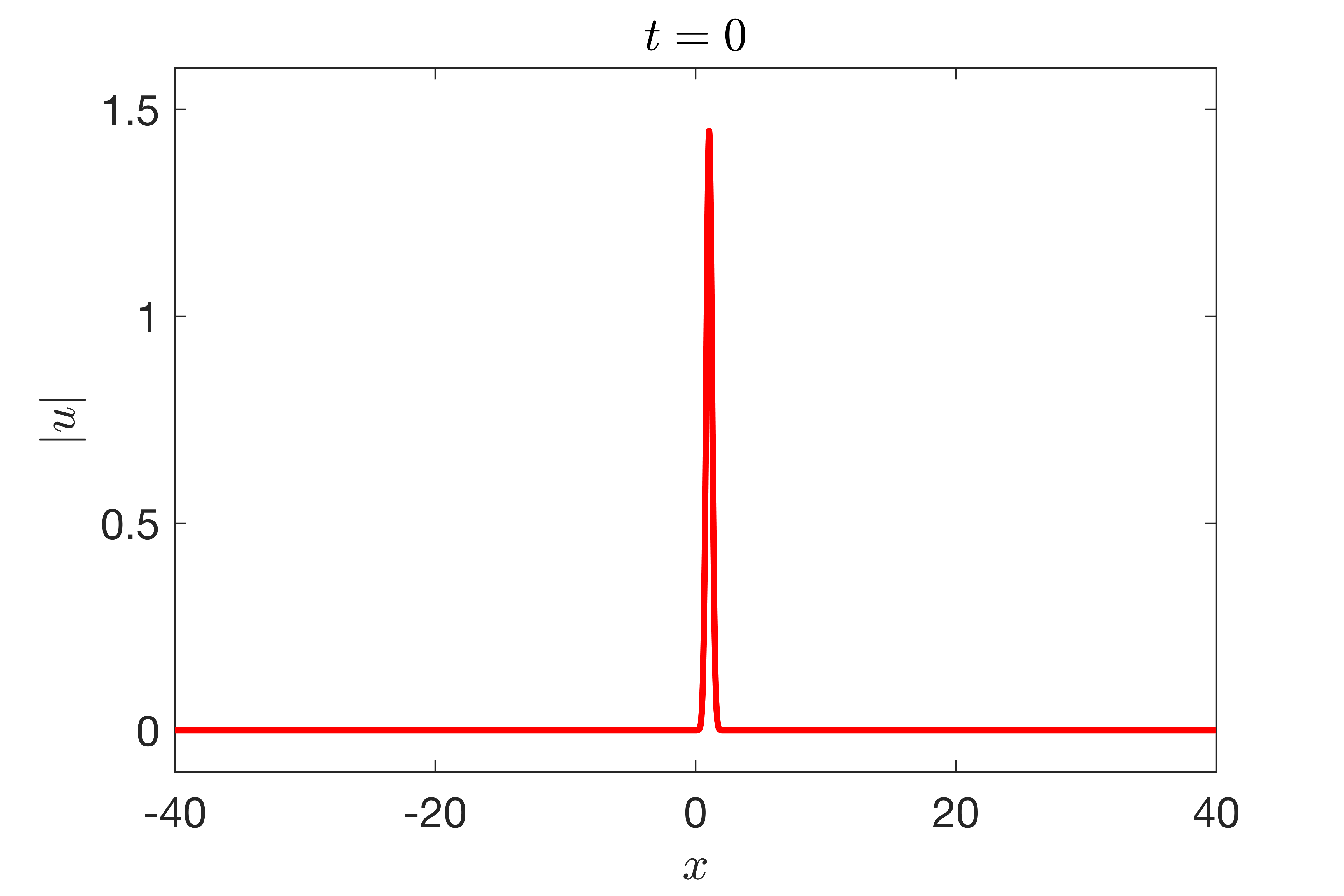}} 
{\includegraphics[width=0.32\textwidth]{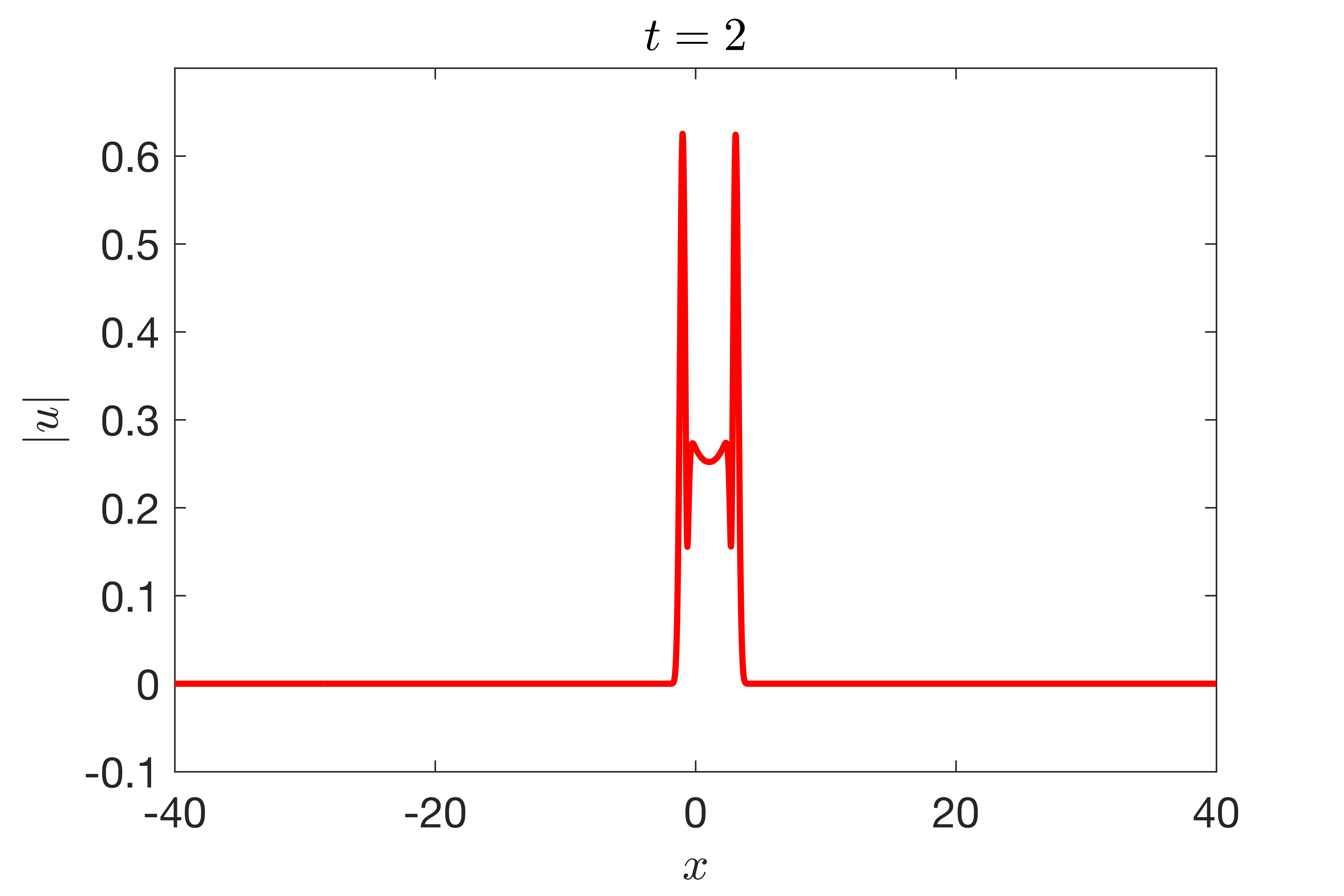}} 
{\includegraphics[width=0.32\textwidth]{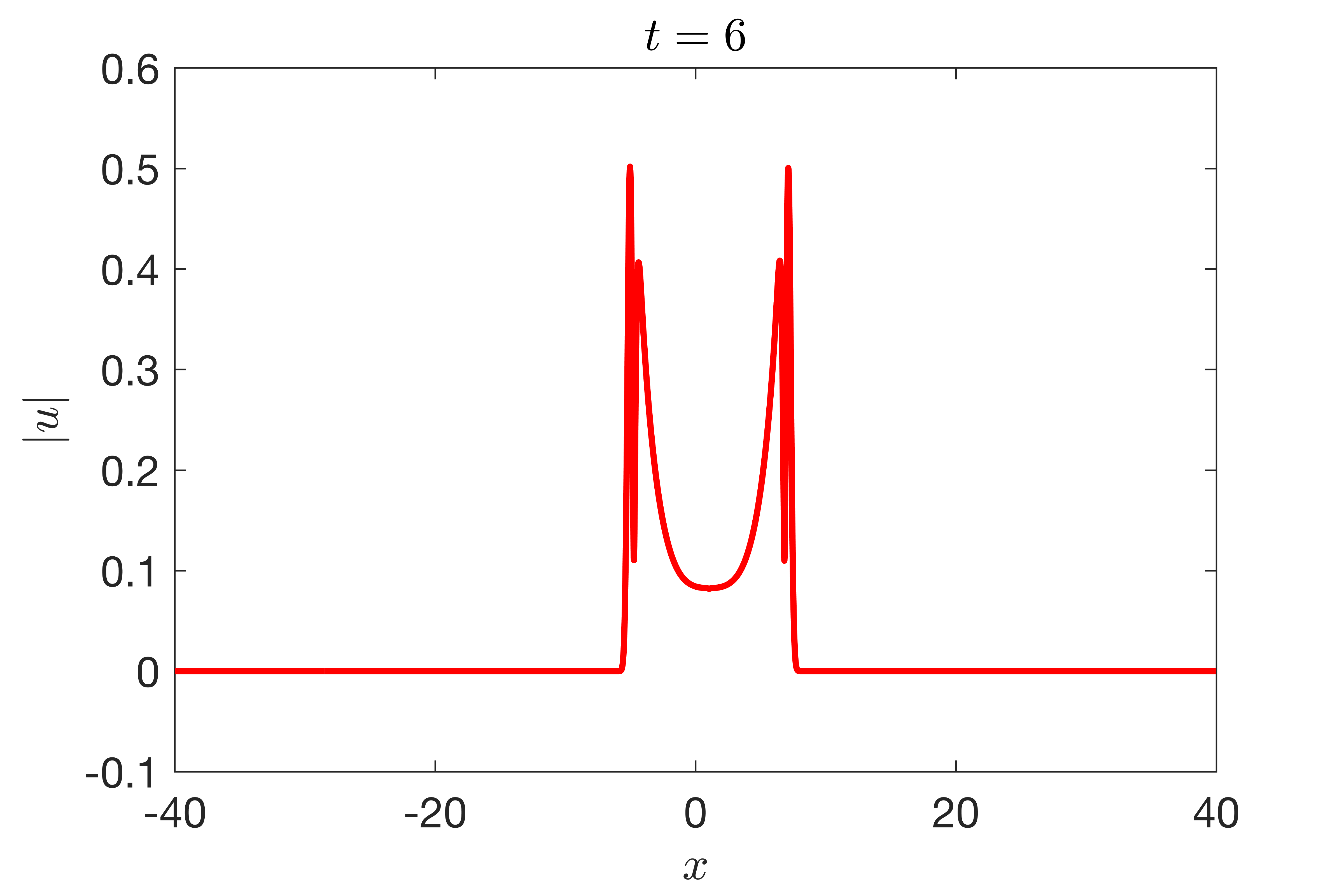}}\\
{\includegraphics[width=0.32\textwidth]{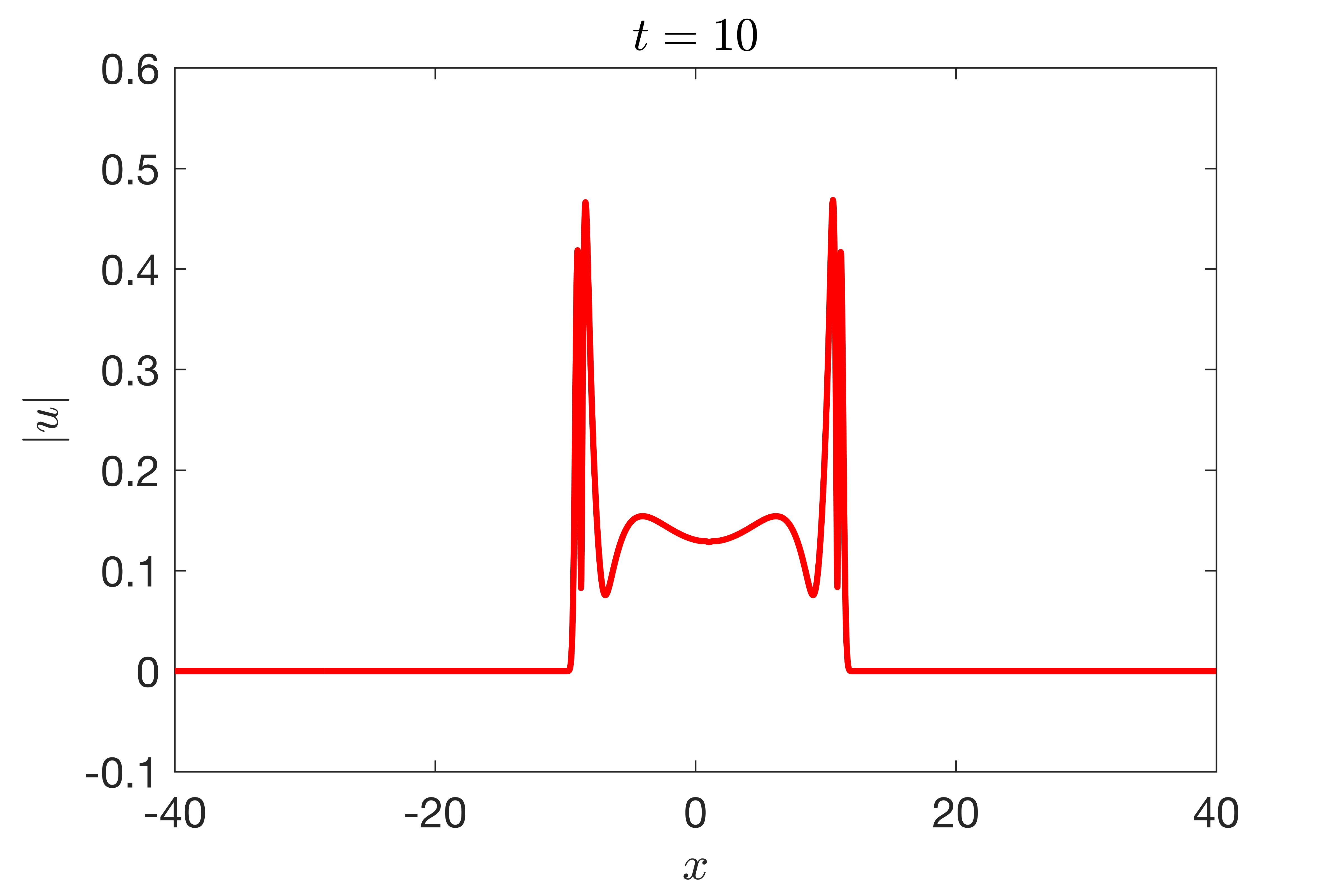}} 
{\includegraphics[width=0.32\textwidth]{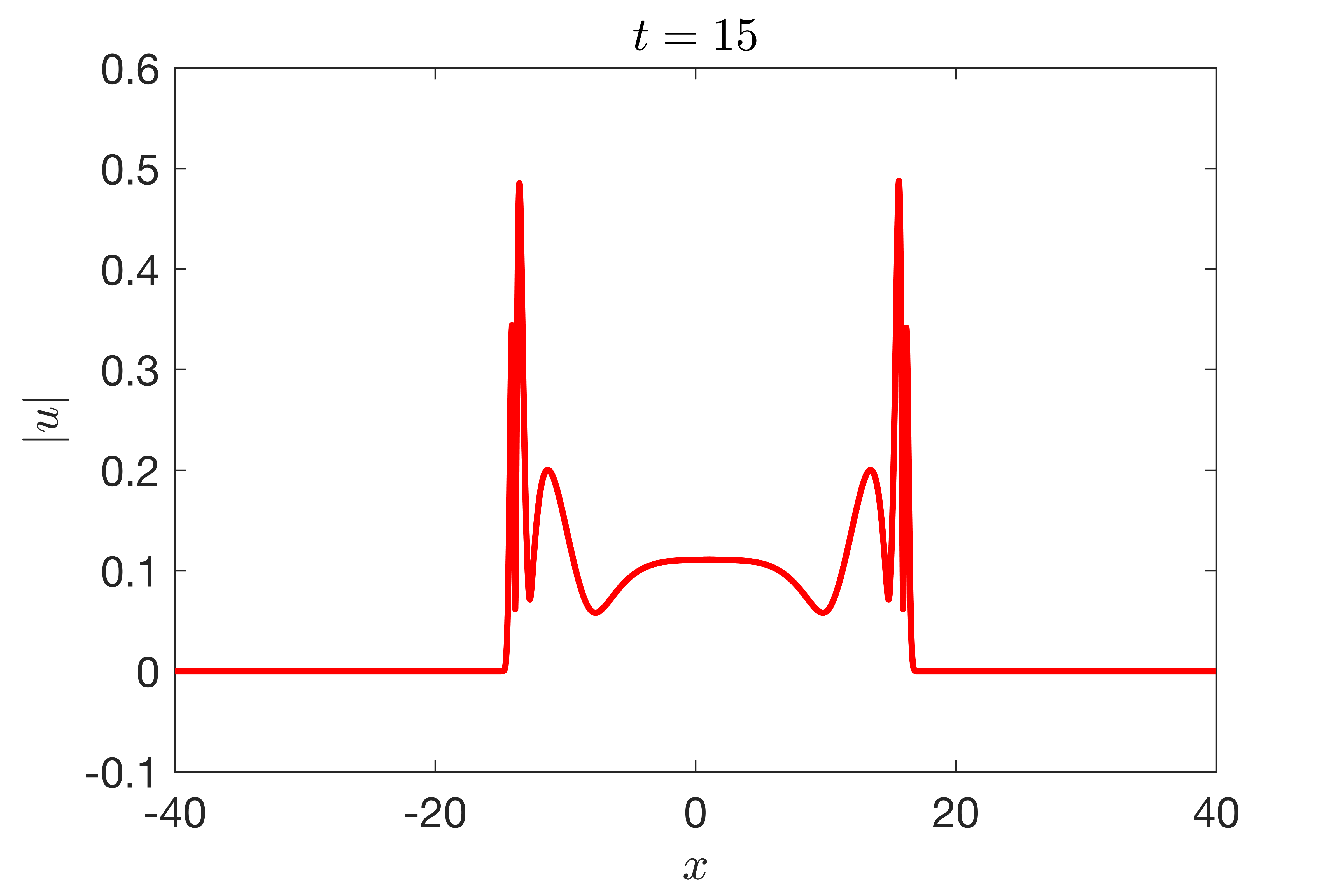}}
{\includegraphics[width=0.32\textwidth]{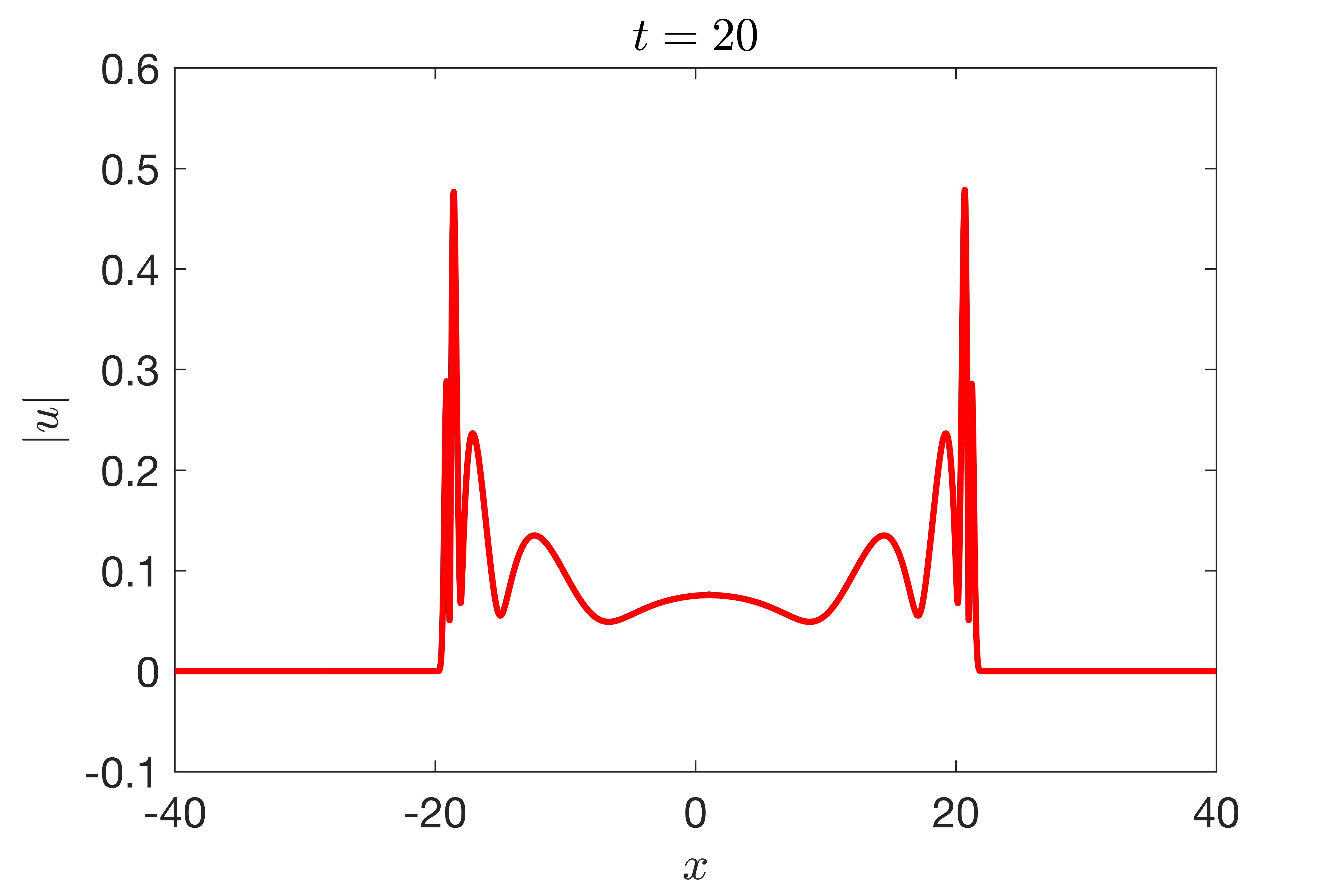}}
    \caption{\scriptsize{The motion of the soliton $|u|$ for the problem ~\eqref{eq:53} using the S.-flux on a uniform mesh of $N = 1600$ at different times $t=0,2,6,10,15,20$. In particular, $u^h$ and $v^h$ are chosen to be in the same approximation space with $s = q = 2$.}}\label{fig:solitonu}
\end{figure}
To solve the problem~\eqref{eq:53}, we use the same spatial discretization as in Section~\ref{liner_sec}. In particular, we choose the number of elements to be $N = 1600$, and $u^h$ and $v^h$ to be in the same approximation space with $s = q = 2$. In addition, the S.-flux is used in the simulation. The time evolution of the modulus of $u$ at different instances ($t=0,2,6,10,15,20$) is shown in Figure \ref{fig:solitonu}. We observe that our proposed scheme ~\eqref{eq:21}--~\eqref{eq:22} is comparable to the one in \cite{li2015ldg}: the scheme is stable, and there is a blow-up phenomenon.

%%%%%%%%%%%%%%%%%%%%%%%%%%%%%%%%%%%
\subsubsection{Example IV}
%%%%%%%%%%%%%%%%%%%%%%%%%%%%%%%%%%%

In the final numerical example in one-dimensional space, we consider model~\eqref{EQ:NLSW} with parameters $\alpha = 1$, $\beta = -2$ and $f(|u|^2) = |u|^2$. The specific form of the equation is
\begin{equation}
    \label{eq:54}
    u_{tt} - u_{xx} + i u_t - 2 |u|^2 u = 0, \quad x \in (-50, 50)\,.
\end{equation}
It turns out that there is an exact solution to this equation~\cite{Wang-JCM07} in the form
\begin{equation}\label{sol:eq54}
    u(x,t) = A \mbox{sech}(Jx) e^{i \Theta t},
\end{equation}
where $A = |J|, \quad \Theta = \frac{1}{2} (-1 \pm \sqrt{1 - 4 J^2})\,.$
Here, we take $J = \frac{1}{4}$ and $\Theta = -\frac{1}{2} - \frac{\sqrt{3}}{4}$ for our simulations. This means that the initial conditions are $u(x,0)=A\sech(Jx)$ and $u_t(x,0)=i\Theta A\sech(Jx)$. The Dirichlet boundary conditions are given by evaluating ~\eqref{sol:eq54} at the boundary points of the domain.

Figure~\ref{nonlinear_exact} shows the modulus of $u$, denoted as $|u|$, at the final time $T = 2$, using polynomial spaces of different degrees. Specifically, we use the S.-flux, and $u^h$ and $v^h$ are chosen to be in the same approximation spaces with $s = q$ and a uniform mesh with $N = 400$. It is noticeable that the simulation results with higher-order polynomials outperform those with lower-order polynomials. This is because the higher-order methods have lower dispersion and dissipation errors than the lower-order methods. Therefore, the EDG method can effectively model certain problems by adjusting its accuracy level as needed since it can achieve arbitrary order.
\begin{figure}[!htb]
\centering
{\includegraphics[width=0.4\textwidth]{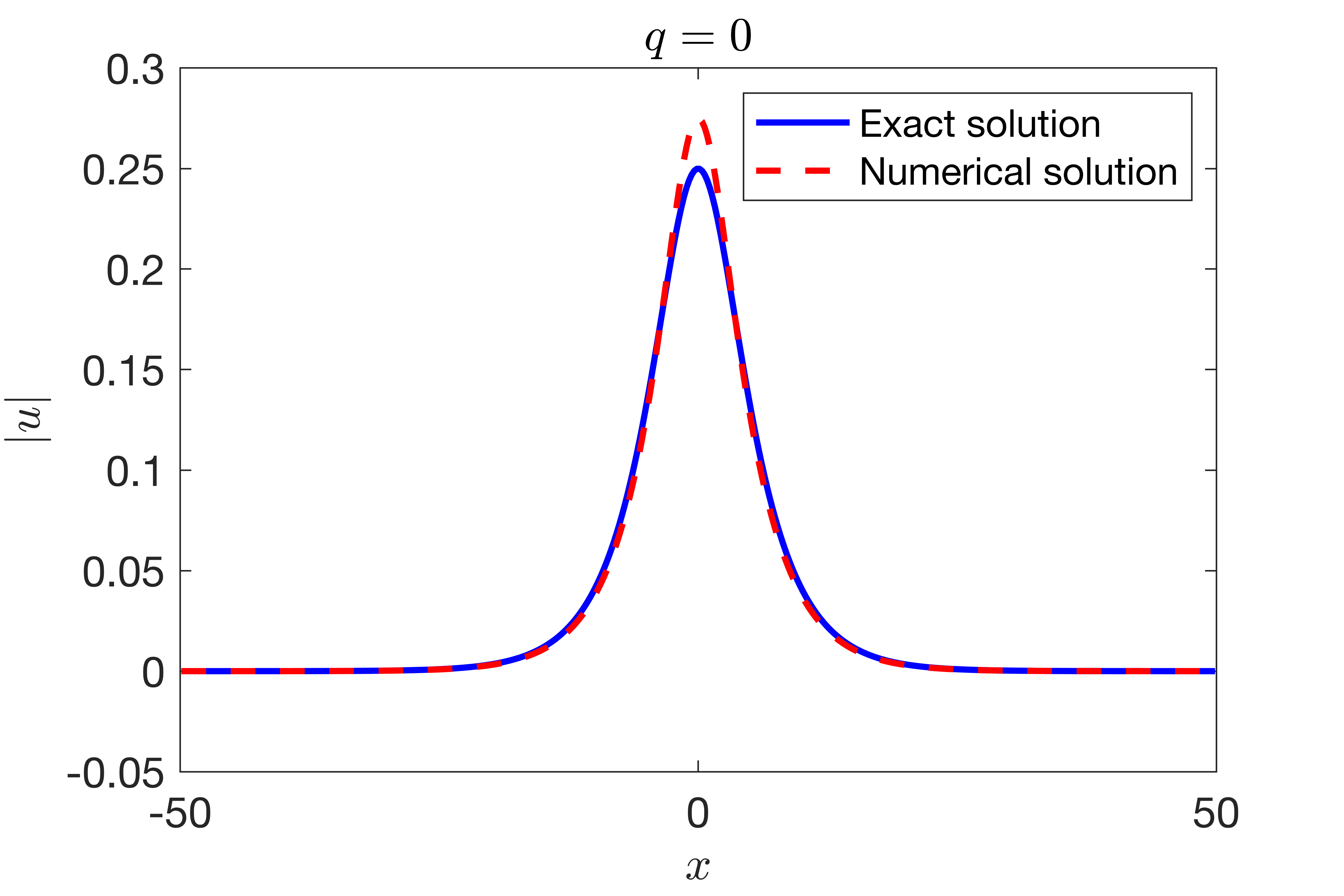}}
{\includegraphics[width=0.4\textwidth]{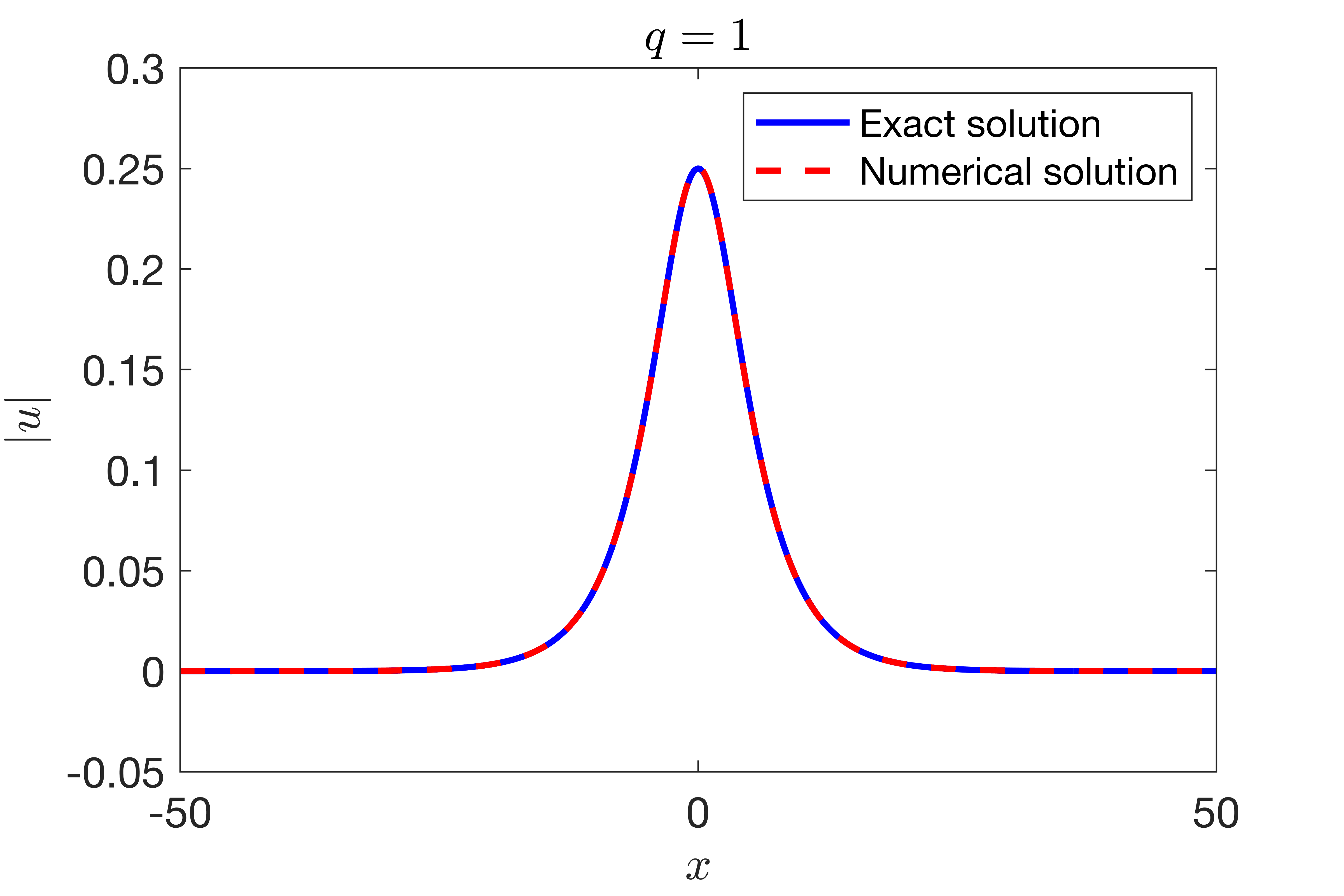}} \\
{\includegraphics[width=0.4\textwidth]{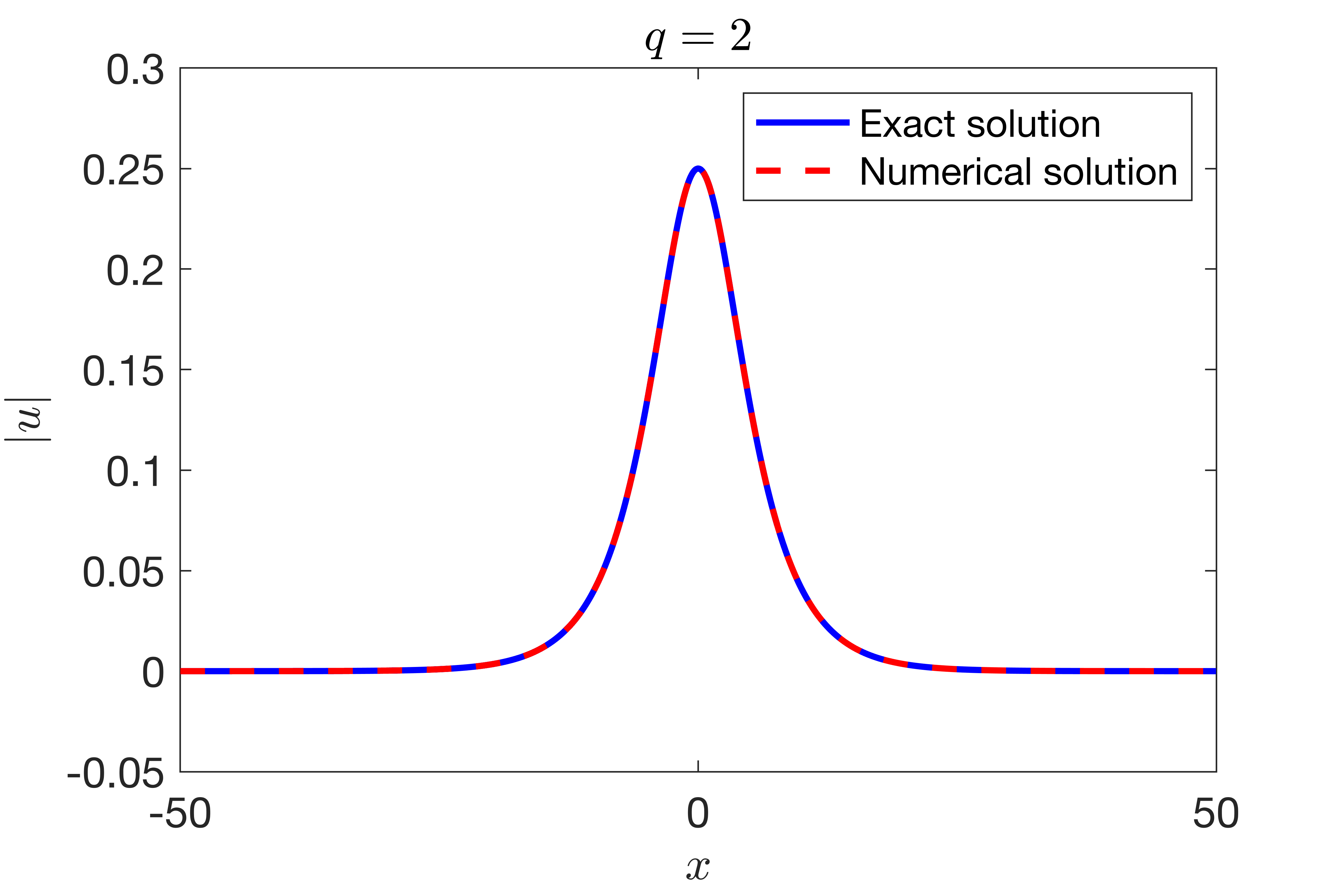}}
{\includegraphics[width=0.4\textwidth]{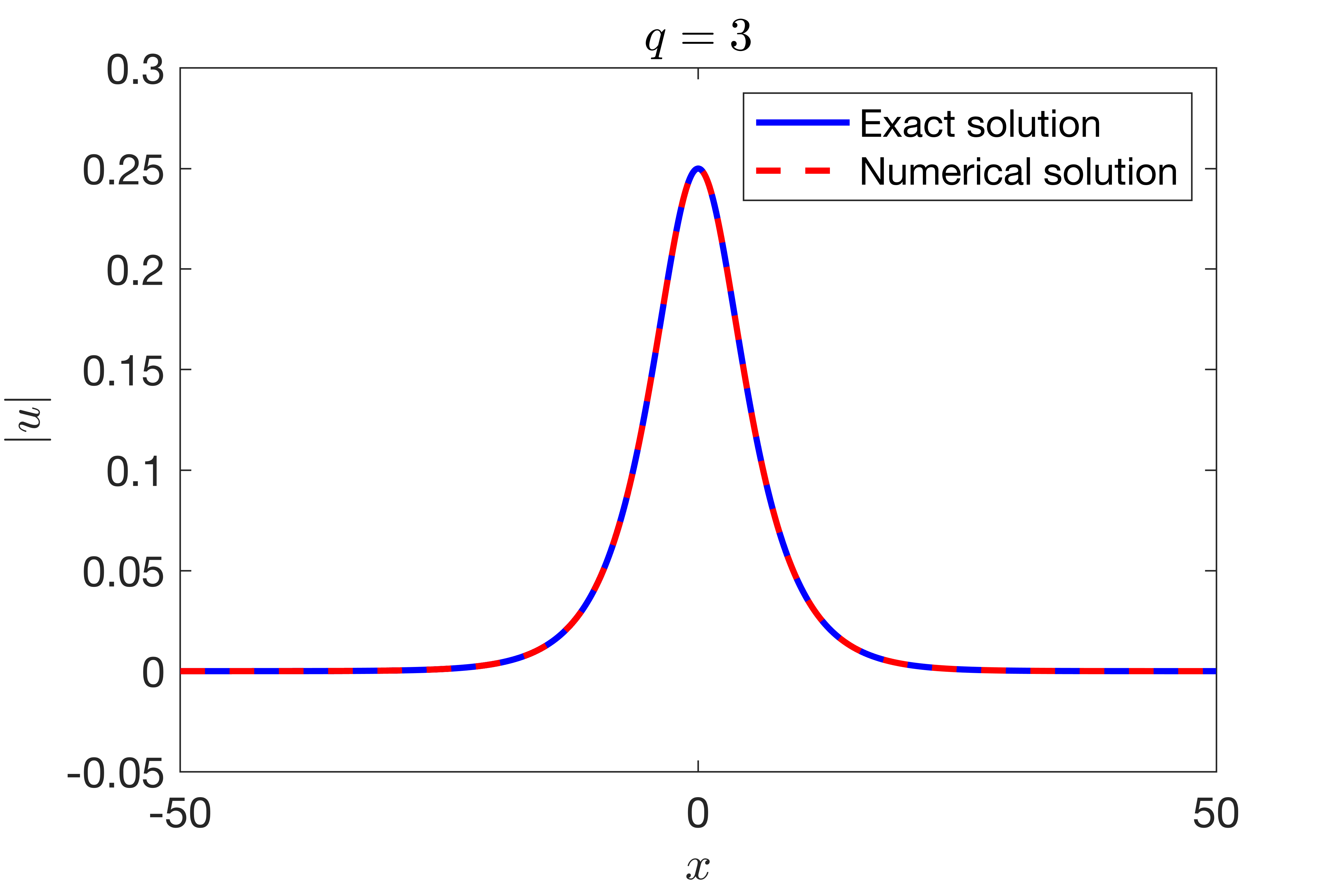}}
    \caption{\scriptsize{The snapshot of $|u|$ for problem (\ref{eq:54}) on a uniform mesh of $N = 400$ at the final time $T=2$ when using the S.-flux. Here we consider the case where $u^h$ and $v^h$ are in the same approximation space with $s = q$.}}
    \label{nonlinear_exact}
\end{figure}

We also present the $L^2$ errors for the real and imaginary parts of $u$ and $v$ in Table \ref{1Dnonlinear_sommer_uv} with the S.-flux at the final time $T = 2$. Here we consider the case where $u^h$ and $v^h$ are in the different approximation spaces with $s = q-1$. As with the linear problem ~\eqref{eq:51}, we observe an optimal rate of convergence for $u$ and $v$ when $q \geq 2$; and first-order convergence for both $u$ and $v$ when $q = 1$.
\begin{table}[!htb]
 	\footnotesize
 	\begin{center}
 		\scalebox{1.0}{
 			\begin{tabular}{c c c c c c c c c c}
 				\hline
 				~ & ~ &  $\mbox{Re}(u)$ &~ & $\mbox{Re}(v)$ &~& $\mbox{Im}(u)$& ~ & $\mbox{Im}(v)$ & ~\\
 				\cline{3-6} \cline{7-10}
 				$(q,s)$ & $N$ & $L^2$ error & order & $L^2$ error & order & $L^2$ error & order & $L^2$ error & order \\
 				\hline
 				(1,0)& 50 & 2.7632e-04 & --& 8.1075e-04 & -- & 1.1553e-03 & -- & 4.1189e-03 & --\\
 				~& 100 & 1.5310e-04 & 0.8518 & 4.5135e-04 & 0.8450 & 4.4889e-04 & 1.3639 & 2.0923e-03 & 0.9772\\
 				~& 200 & 8.0268e-05 & 0.9316 & 2.3736e-04 & 0.9272 & 2.1507e-04 & 1.0616 & 1.0542e-03 & 0.9890 \\
 				~& 400 & 4.1040e-05 & 0.9678 & 1.2156e-04 & 0.9654 & 5.2904e-04 & 0.9818 & 2.4697e-02 & 0.9947\\
 				~& 800 & 2.0742e-05 & 0.9845 & 6.1490e-05 & 0.9833 & 5.5235e-05 & 0.9793 & 2.6500e-04 & 0.9974\\
 				~ & ~ & ~ & ~ & ~ & ~ & ~ & ~ & ~ & ~\\
 				(2,1)& 50 & 1.2434e-05 & --& 4.0130e-05 & -- & 9.9884e-05 & -- & 3.1689e-04 & --\\
 				~& 100 & 1.4381e-06 & 3.1120 & 7.6089e-06 & 2.3989 & 1.3750e-05 & 2.8608 & 8.1331e-05 & 1.9621 \\
 				~& 200 & 1.8626e-07 & 2.9488 & 1.6364e-06 & 2.2172 & 1.8040e-06 & 2.9302 & 2.0954e-05 & 1.9566 \\
 				~& 400 & 2.3725e-08 & 2.9728 & 3.6593e-07 & 2.1609 & 2.3151e-07 & 2.9620 & 5.3290e-06 & 1.9753\\
 				~& 800 & 2.9811e-09 & 2.9925 & 8.6259e-08 & 2.0848 & 2.9330e-08 & 2.9806 & 1.3438e-06 & 1.9875 \\
 				~ & ~ & ~ & ~ & ~ & ~ & ~ & ~ & ~ & ~\\
 				(3,2)& 50 & 1.8263e-06 & --& 3.9991e-06 & -- & 9.5320e-06 & -- & 2.5282e-05 & --\\
 				~& 100 & 2.0906e-08 & 6.4489 & 2.7533e-07 & 3.8604 & 5.1618e-07 & 4.2068 & 3.2109e-06 & 2.9771 \\
 				~& 200 & 2.2288e-09 & 3.2296 & 3.0566e-08 & 3.1712 & 3.3904e-08 & 3.9283 & 4.2032e-07 & 2.9334 \\
 				~& 400 & 1.3514e-10 & 4.0437 & 3.4272e-09 & 3.1568 & 2.1128e-09 & 4.0042 & 5.3332e-08 & 2.9784 \\
 				~& 800 & 8.4049e-12 & 4.0071 & 4.0911e-10 & 3.0665 & 1.3145e-10 & 4.0065 & 6.7175e-09 & 2.9890\\
 				~ & ~ & ~ & ~ & ~ & ~ & ~ & ~ & ~ & ~\\
 				(4,3)& 50 & 8.8979e-08 & --& 3.5458e-07 & -- & 5.7281e-07 & -- & 2.0679e-06 & --\\
 				~& 100 & 1.2674e-09 & 6.1335 & 1.2599e-08 & 4.8147 & 1.2170e-08 & 5.5567 & 1.2732e-07 & 4.0216 \\
 				~& 200 & 2.6840e-11 & 5.5614 & 6.5149e-10 & 4.2735 & 3.9837e-10 & 4.9331 & 8.2310e-09 & 3.9513\\
 				~& 400 & 6.8886e-13 & 5.2840 & 3.2481e-11 & 4.3260 & 1.3212e-11 & 4.9142 & 5.3332e-10 & 3.9480\\
 				~& 800 & 2.7300e-14 & 4.6572 & 2.0088e-12 & 4.0152 & 4.2361e-13 & 4.9629 & 3.4113e-11 & 3.9666\\     
 				\hline
 			\end{tabular}
 		}
 	\end{center}
    \vspace{-0.4cm}
 	\caption{\scriptsize{$L^2$ errors and convergence rates for the real and imaginary parts of $u$ and $v$ at the final time $T = 2$ for the problem ~\eqref{eq:54} using the S.-flux. The approximations $u^h$ and $v^h$ belong to different approximation spaces with $s = q-1$. The number of cells is denoted by $N$, with a mesh size of $100/N$.}}
    \label{1Dnonlinear_sommer_uv}
 \end{table}

%%%%%%%%%%%%%%%%%%%%%%%%%%%%%%%%%%%%%%%%%%%%%%%%%%%%%%%%%%%%%%%%%%
\subsection{Two-dimensional case}
%%%%%%%%%%%%%%%%%%%%%%%%%%%%%%%%%%%%%%%%%%%%%%%%%%%%%%%%%%%%%%%%%%

In this section, we investigate the convergence rate of the proposed EDG scheme ~\eqref{eq:21}--\eqref{eq:22} in two-dimensional space. In particular, we consider the following problem
\begin{equation}\label{eq:55}
    u_{tt} - (u_{xx} + u_{yy}) + i (1+e) u_t + e^{|u|^2} u = 0, \quad (x,y) \in (0, 2\pi) \times (0, 2\pi),
\end{equation}
with periodic boundary conditions and initial values $u(x,0) = e^{i(x+y)}, u_t (x,0) = i e^{i(x+y)}\,.$ The exact solution to the problem is $u(x,t) = e^{i(x+y+t)}\,.$

The discretization is performed with elements over the Cartesian grids defined by $(x_k,y_j) = (kh,jh)$, $k,j = 0,1,..., N$ with $h = 2\pi/N$. As in 1D examples, we use three different numerical fluxes: the S.-flux~\eqref{flux:up} with $\xi = 1$, the A.-flux ~\eqref{flux:a1} and the C.-flux~\eqref{flux:central}, but only consider the case where $u_h$ and $v_h$ are in the same approximation space, i.e, $q_x = s_x = q$ and $q_y = s_y = q$.

In Figure~\ref{fig:example5}, we summarize the $L^2$ errors for the real part of $u$ and $v$ against the mesh size $h$ with different degrees of approximation $q$; Table \ref{table_example_2d} presents the corresponding linear least square estimates of the rates of convergence from the error curves in Figure \ref{fig:example5}. For $u$, we observe the optimal convergence rate of $q + 1$ for the S.-flux when $q \geq 2$ and an order reduction of $1$ compared to the optimal convergence rate for $q = 1$. For the A-flux, we observe optimal convergence for $q \geq 3$, an order reduction of $1$ for $q = 2$, of $1.5$ for $q = 1$. For the C-flux, we observe optimal convergence for $q = 1,3,4$ and an order reduction of $1$ for $q = 2$. For $v$, we have an order reduction of $2$ compared to the optimal convergence rate when using the A-flux and of $1$ when using the S-flux; when using the C-flux, there is an order reduction of $1$ for odd $q$ and of $2$ for even $q$ compared to the optimal convergence rate. These observations are consistent with the 1D results.

\begin{comment}
\begin{figure}[ht]
\begin{minipage}[b]{0.33\linewidth}
\centering
\includegraphics[width=\textwidth]{Example5Su.png}
% \caption{default}
\label{fig:2d_sommer}
\end{minipage}
% \hspace{0.5cm}
\begin{minipage}[b]{0.33\linewidth}
\centering
\includegraphics[width=\textwidth]{Example5Sv.png}
% \caption{default}
\label{fig:2dv_sommer}
\end{minipage}
% \hspace{0.5cm}
\begin{minipage}[b]{0.33\linewidth}
\centering
\includegraphics[width=\textwidth]{Example5Au.png}
% \caption{default}
\label{fig:2d_alter}
\end{minipage} \\
\begin{minipage}[b]{0.33\linewidth}
\centering
\includegraphics[width=\textwidth]{Example5Av.png}
% \caption{default}
\label{fig:2dv_alter}
\end{minipage}
% \hspace{0.5cm}
\begin{minipage}[b]{0.33\linewidth}
\centering
\includegraphics[width=\textwidth]{Example5Cu.png}
% \caption{default}
\label{fig:2d_central}
\end{minipage}
% \hspace{0.5cm}
\begin{minipage}[b]{0.33\linewidth}
\centering
\includegraphics[width=\textwidth]{Example5Cv.png}
% \caption{default}
\label{fig:2dv_central}
\end{minipage}
\caption{$L^2$ errors and the corresponding convergence rates for the real parts of $u$ and $v$ of the problem ~\eqref{eq:55} using $\mathcal{P}^q$ polynomials on a uniform Cartesian mesh of $N \times N$ elements up to terminal time $T = 1$, the imaginary parts are omitted due to similar behavior.}
\end{figure}
\end{comment}

\begin{figure}[!htb]
\centering
{\includegraphics[width=0.32\textwidth]{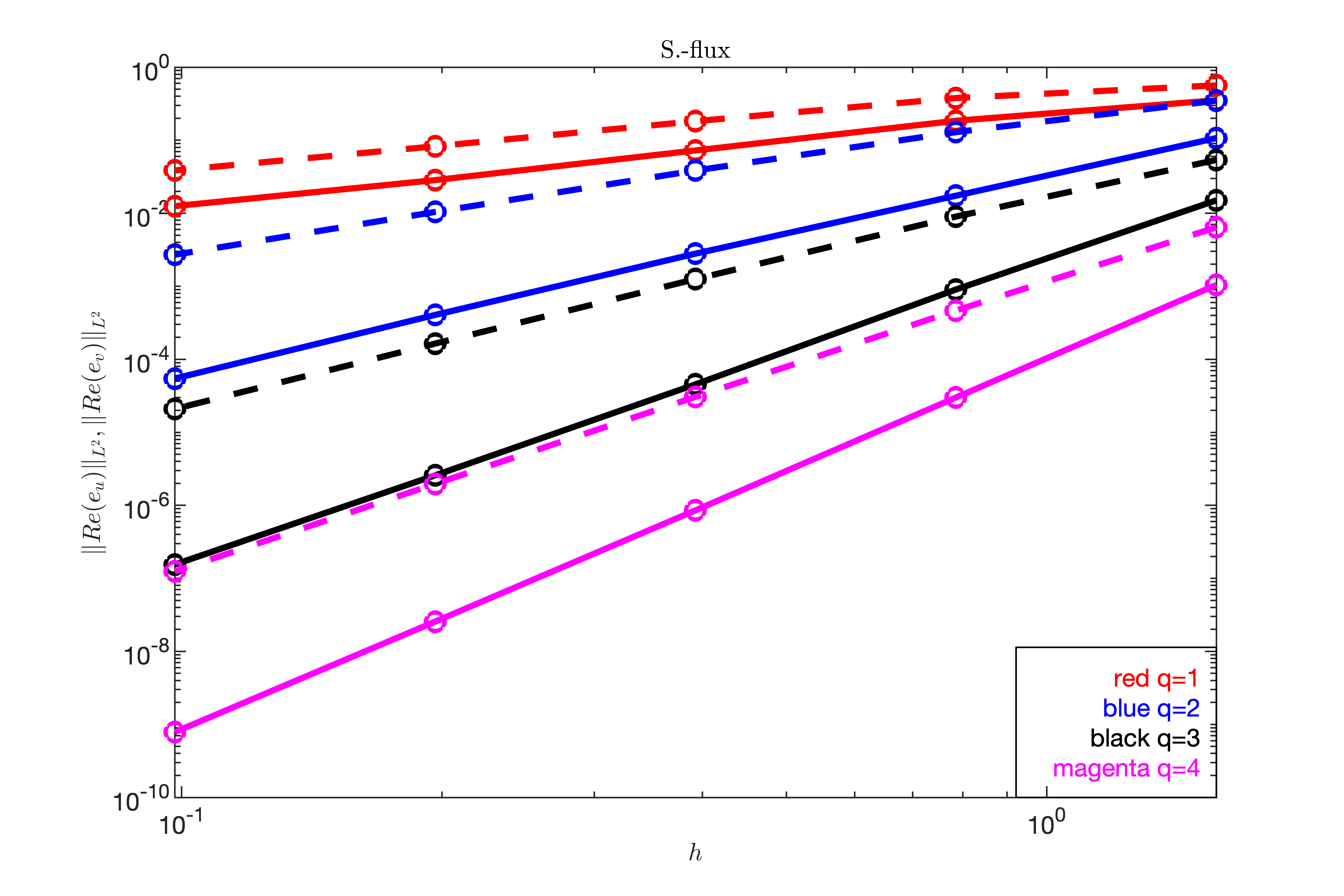}}
{\includegraphics[width=0.32\textwidth]{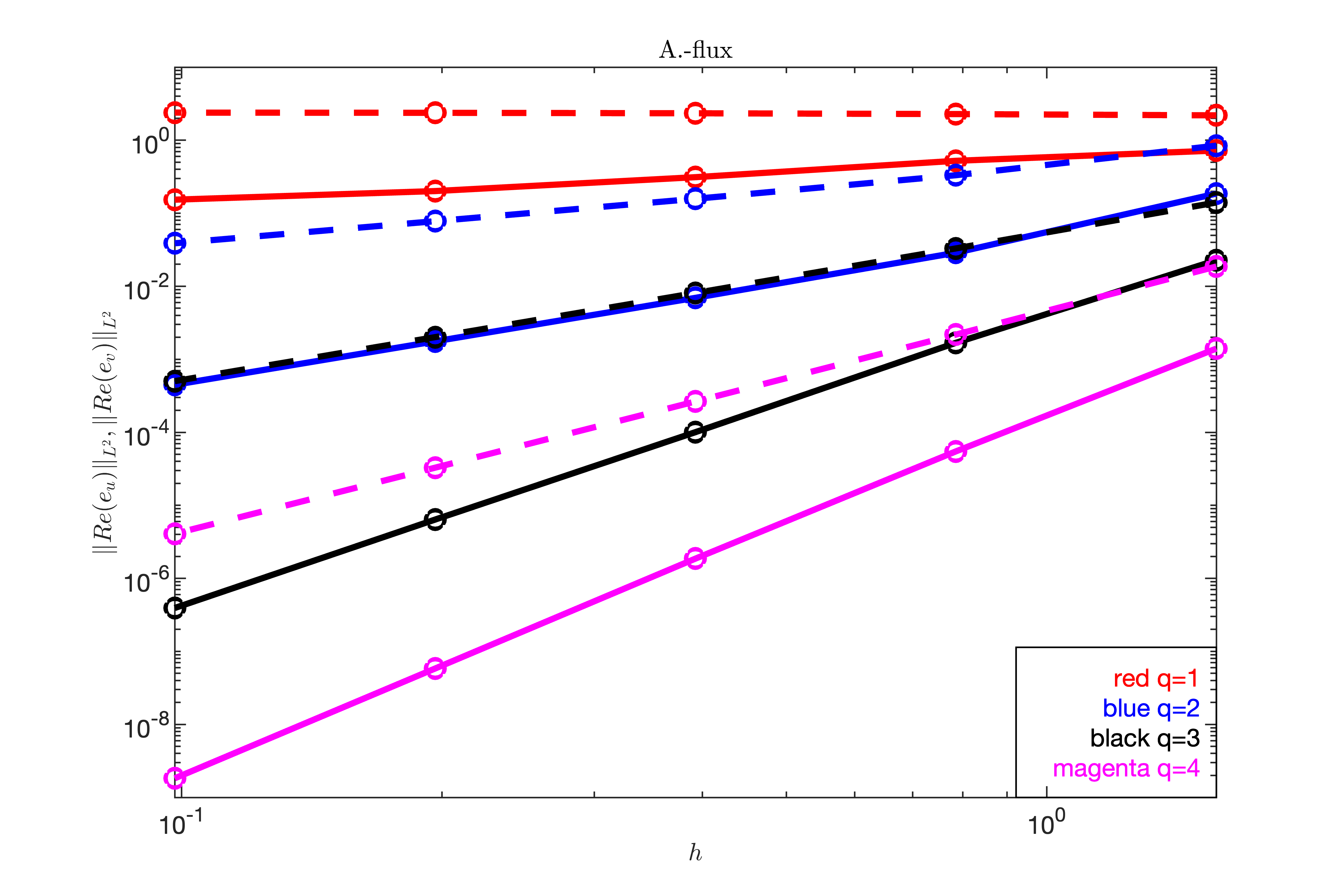}} 
{\includegraphics[width=0.32\textwidth]{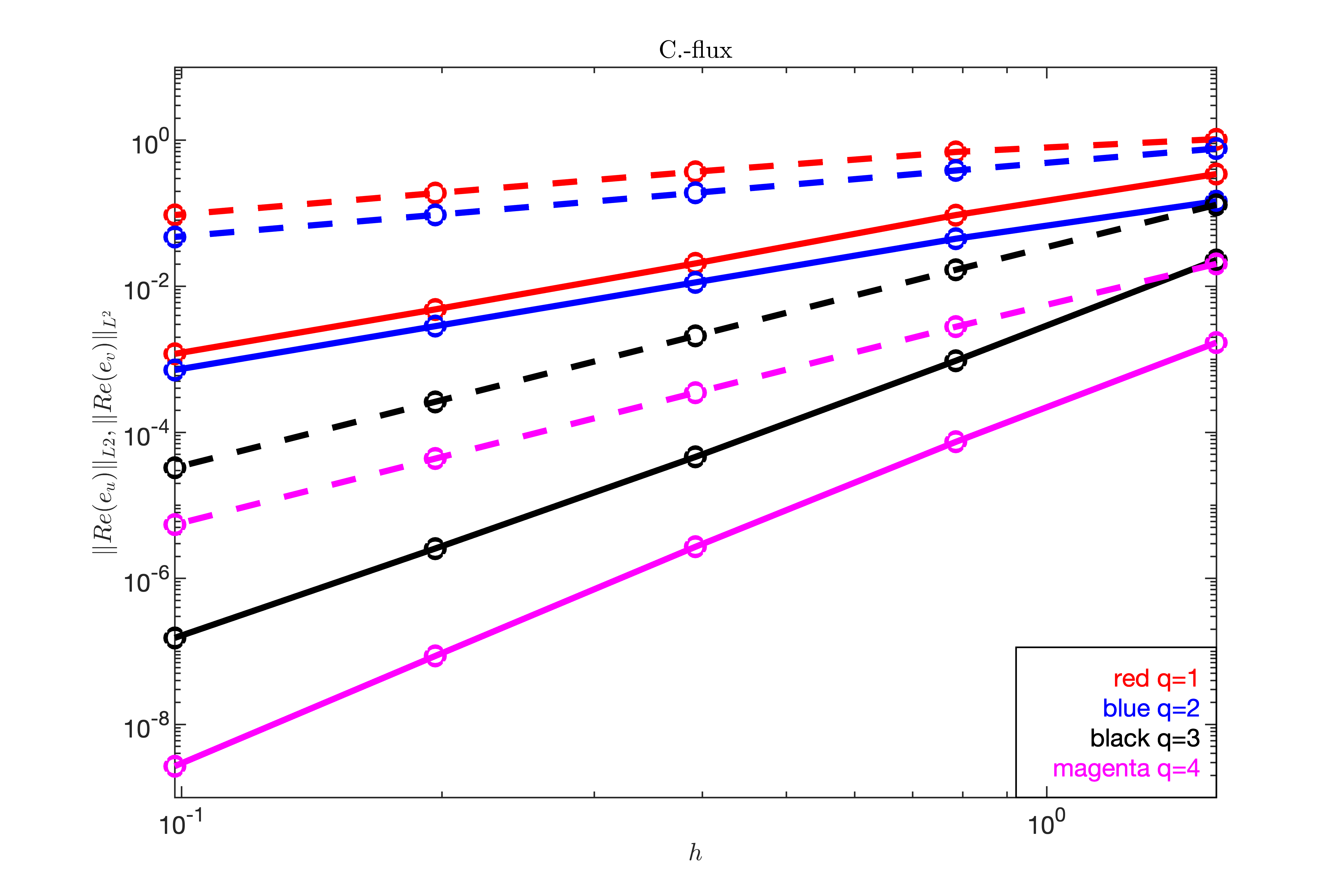}}
    \caption{\scriptsize{$L^2$ errors for the real parts of $u$ and $v$ for the problem ~\eqref{eq:55}. The solid lines show the results of $u_h$, while the dashed lines show the results of $v_h$. $u^h$ and $v^h$ are chosen to be in the same approximation space.}}
    \label{fig:example5}
\end{figure}

\begin{table}[!htb]
 	\footnotesize
 	\begin{center}
 		\scalebox{1.0}{
 			\begin{tabular}{c c c c c c}
 				\hline
 				~ & ~ &  ~& convergence rates of $u/v$  & ~ &~\\
 				\hline
 				flux/$(q,s)$ & ~ & $q=1$ & $q=2$ & $q=3$ & $q=4$ \\
 				\hline
 				S.-flux& ~ & 1.2332/0.9938 & 2.7304/1.7647 & 4.1591/2.8439 & \quad \quad 5.0882/3.9196 \\
 				% ~ & ~ & ~ & ~ & ~ & ~\\
                    A.-flux& ~ & 0.5847/-0.0306 & 2.1428/1.0975 & 3.9683/2.0289 & \quad\quad   4.8977/3.0363 \\
 				% ~ & ~ & ~ & ~ & ~ & ~\\
 				C.-flux& ~ & 2.0651/0.8774 & 1.9333/1.0062 & 4.2927/2.9930 & \quad \quad 4.8280/2.9716 \\  
 				\hline
 			\end{tabular}
 		}
 	\end{center}
  \vspace{-0.4cm}
 	\caption{\scriptsize{Linear least squares estimates of the rates of convergence from the curves in Figure \ref{fig:example5} for the real parts of $u$ and $v$ of the problem ~\eqref{eq:55}.}}\label{table_example_2d}
 \end{table}

\section{Concluding remarks}
\label{sec:conclusion}
%%%%%%%%%%%%%%%%%%%%%%%%%%%%%%%%%%%%%%%%%%%%%%%%%%%%%%%%%%%%%%%%%%
%%%%%%%%%%%%%%%%%%%%%%%%%%%%%%%%%%%%%%%%%%%%%%%%%%%%%%%%%%%%%%%%%%

We proposed and analyzed a novel energy-based discontinuous Galerkin scheme for solving nonlinear Schr\"{o}dinger equations with the wave operator. The scheme incorporates a first-order time derivative as an auxiliary variable, which reduces the second-order equation in time to a first-order system in time. By doing so, we only need to introduce one auxiliary regardless of the dimension of the problem, which improves computational efficiency by reducing the memory requirements for the variables to be solved. Moreover, the scheme is unconditionally stable without the need for penalty terms. We demonstrated the stability of the scheme for general mesh-independent numerical fluxes. We also derive suboptimal estimates of convergence in the energy norm and observe, for polynomial degrees above $1$, optimal convergence in the $L^2$ norm for the energy-conserving alternating flux as well as for dissipative methods based on the Sommerfeld flux.

Designing more general numerical fluxes than we proposed here and deriving error estimates for the corresponding schemes is an interesting direction for future study. It is also of great interest to see if the schemes we developed here can be generalized to handle stochastic versions of model~\eqref{EQ:NLSW}. Utilizing the scheme developed here to study inverse problems of reconstructing model parameters from measured solution data of equation~\eqref{EQ:NLSW} is an ongoing project.
%Future research can focus on deriving error estimates for more general numerical fluxes and studying problems with randomness in order to apply the method to a wider range of physical problems.

%%%%%%%%%%%%%%%%%%%%%%%%%%%%%%%%%%%%%%%%%%%%%%%%%%%%%%%%%%%%%%%%%%
%%%%%%%%%%%%%%%%%%%%%%%%%%%%%%%%%%%%%%%%%%%%%%%%%%%%%%%%%%%%%%%%%%
\section*{Acknowledgments}
%%%%%%%%%%%%%%%%%%%%%%%%%%%%%%%%%%%%%%%%%%%%%%%%%%%%%%%%%%%%%%%%%%
%%%%%%%%%%%%%%%%%%%%%%%%%%%%%%%%%%%%%%%%%%%%%%%%%%%%%%%%%%%%%%%%%%

This work is partially supported by the National Science Foundation through grants DMS-1913309, DMS-1937254, and DMS-2309802.

%%%%%%%%%%%%%%%%%%%%%%%%%%%%%%%%%%%%%%%%%%%%%%%%%%%%%%%%%%%%%%%%%
%%%%%%%%%%%%%%%%%%%%%%%%%%%%%%%%%%%%%%%%%%%%%%%%%%%%%%%%%%%%%%%%%
{\small
\bibliography{BIB-REN,BIB-Local}
\bibliographystyle{siam}
%\bibliography{C:/RenGDrive/Publications/Bibliography/BIB-REN}
}
%%%%%%%%%%%%%%%%%%%%%%%%%%%%%%%%%%%%%%%%%%%%%%%%%%%%%%%%%%%%%%%%%%
%%%%%%%%%%%%%%%%%%%%%%%%%%%%%%%%%%%%%%%%%%%%%%%%%%%%%%%%%%%%%%%%%%

\end{document}